\numberwithin{equation}{section}
\newcommand{\rr}{\mathbb{R}}
\newcommand{\lan}{\langle}
\newcommand{\ran}{\rangle}
\newcommand{\be}{\begin{eqnarray*}}
\newcommand{\bel}{\begin{eqnarray}}
\newcommand{\ee}{\end{eqnarray*}}
\newcommand{\eel}{\end{eqnarray}}
\newcommand{\ba}{\begin{aligned}}
\newcommand{\ea}{\end{aligned}}
\newcommand{\de}{\Delta}
\newcommand{\al}{\alpha}
\newcommand{\na}{\nabla}
\newcommand{\ep}{\epsilon}
\newcommand{\pa}{\partial}
\newcommand{\wh}{\widehat}
\newcommand{\pay}{\partial_{y}}
\newcommand{\CC}{C_{2,\infty}}
\newtheorem{theorem}{Theorem}
\newtheorem{defn}{Definition}
\newtheorem{lem}{Lemma}
\newtheorem{pro}{Proposition}
\newtheorem{rmk}{Remark}
\numberwithin{remark}{section}
\numberwithin{lem}{section}
\newcommand{\norm}[1]{\left\lVert#1\right\rVert}
\newcommand{\abs}[1]{\left\vert#1\right\vert}
\newcommand{\brak}[1]{\langle #1 \rangle}
\newcommand{\grad}{\nabla}
\newcommand\Torus{{\mathbb T}}
\newcommand\Real{{\mathbb R}}
\title{Suppression of blow-up in Parabolic-Parabolic Patlak-Keller-Segel via strictly monotone shear flows}
\date{\today}
\author{He Siming\footnote{\textit{simhe@math.umd.edu}, Department of Mathematics and 
Center for Scientific Computation and Mathematical Modeling, University of Maryland, College Park. }}
\begin{document}
\maketitle
\begin{abstract}
In this paper we consider the parabolic-parabolic Patlak-Keller-Segel models in $\mathbb{T}\times\rr$ with advection by a large strictly monotone shear flow.
Without the shear flow, the model is $L^1$ critical in two dimensions with critical mass $8\pi$: solutions with mass less than $8\pi$ are global in time and there exist solutions with mass larger than $8 \pi$ which blow up in finite time \cite{Schweyer14}.
We show that the additional shear flow, if it is chosen sufficiently large, suppresses one dimension of the dynamics and hence can suppress blow-up. In contrast with the parabolic-elliptic case \cite{BedrossianHe16}, the strong shear flow has destabilizing effect in addition to the enhanced dissipation effect, which make the problem more difficult.

\end{abstract}
\setcounter{tocdepth}{1}
\tableofcontents
\section{Introduction}
In this paper, we consider the two-dimensional parabolic-parabolic Patlak-Keller-Segel equations with additional effect of advection by a shear flow, which model the chemotaxis phenomena in a moving fluid:\begin{subequations}\label{KS advection}
\begin{align}
&\pa_t n+\na \cdot(n \na c)+Au(y)\pa_x n=\de n,\label{KS_1}\\
&\ep\left(\pa_t c+Au(y)\pa_x c\right)= \de c +n-c\label{KS_2},\\
&n(x,y,0)=n_{in}(x,y),\quad c(x,y,0)=c_{in}(x,y),\quad(x,y)\in \mathbb{T} \times\rr.
\end{align}
\end{subequations}
Here $n(x,y,t)$ and $c(x,y,t)$ denote the micro-organism density and the chemo-attractant density, respectively. The divergence free vector field $(Au(y),0)$  represents the underlying fluid velocity. When $Au\equiv 0$, the system is the classical parabolic-parabolic Patlak-Keller-Segel equation modeling chemotaxis in a static environment; see e.g. \cite{Patlak},\cite{KS}. In this case, the first part of \eqref{KS advection} describes the time evolution of the micro-organism density $n$ subject to diffusion and chemo-attractant-triggered aggregation. The second part of \eqref{KS advection} models the time evolution of the chemo-attractant secreted by the micro-organism. The parameter $\ep=0,1$ corresponds to the parabolic-elliptic case and parabolic-parabolic case respectively. When $Au\neq0$, the system \eqref{KS advection} takes into account the advection effect of fluid in the ambient environment.

We focus on the case where $Au\neq 0$ to reflect a scenario of chemotaxis taken place in moving fluid. The question we address is whether one can use a shear flow $Au$ to prevent the micro-organism from undergoing chemotactic blow-up when $u= 0$. It is worth mentioning that system \eqref{KS advection} is one of many attempts to take into account the effect of the moving fluid. For other related models, see \cite{KiselevRyzhik12},\cite{Lorz10},\cite{LiuLorz11},\cite{FrancescoLorzMarkowich10},\cite{DuanLorzMarkowich10},\cite{Lorz12},\cite{ChaeKangLee14},\cite{Winkler12}.
We recall the large literature on the Patlak-Keller-Segel model in the static case ($u=0$),  referring the interested reader to the review \cite{Hortsmann} and the following works \cite{Biler95},\cite{BilerCorriasDolbeault11},\cite{JagerLuckhaus92},\cite{HillenPainter09},\cite{Naito06},\cite{NagaiSenbaYoshida97},\cite{CorriasPerthame06}, \cite{corrias2014existence},
\cite{HillenPainter09},\cite{TaoWinkler12},\cite{CalvezCorriasEbde12},\cite{KozonoSugiyama09}, \cite{CorriasPerthame08},\cite{BlanchetEJDE06},\cite{BlanchetCarrilloMasmoudi08},\cite{BlanchetCalvezCarrillo08},\cite{CalvezCorrias},\cite{BMKS13},\cite{BRB10},\cite{BedrossianKim10},\cite{BedrossianIA10}
.

It is well known that the Patlak-Keller-Segel equation \eqref{KS advection} is $L^1$ critical and the $L^1$ norm of the solution $M:=||n||_1$ is preserved. If there is no underlying moving fluid, i.e., $Au\equiv0$, the existing results for the parabolic-parabolic case ($\ep=1$) can be summarized as follows. In the sub-critical case $M<8\pi$, the global well-posedness of the free energy solution to \eqref{KS advection} is known \cite{CalvezCorrias},\cite{CarrapatosoMischler17}. On the other hand, if $M>8\pi$, it is shown in \cite{Schweyer14} that there exists finite time blow-up solution on $\rr^2$. In higher-dimension, there exist solutions with arbitrary mass which blow up in finite time,\cite{Winkler13}.

In the recent years, progress was made in proving global existence of solution to \eqref{KS advection} in the parabolic-elliptic regime ($\ep=0$) with total mass $M>8\pi$ and $Au\neq 0$. In \cite{KiselevXu15}, it was shown that if the vector field $u$ is relaxation enhancing - a generalization of weakly mixing introduced in \cite{CKRZ08} - with large enough amplitude, the solution $n$ is global in time. The authors proved that due to the mixing property of $u$, the solution undergoes a large growth in its gradient which significantly enhances the dissipation. Once the enhanced dissipation dominates the nonlinear aggregation, suppression of chemotactic blow-up of Patlak-Keller-Segel follows. In \cite{BedrossianHe16}, it is shown that one can use a strong shear flow without degenerate critical points to suppress the blow up in \eqref{KS advection}. The idea in the paper is to exploit the enhanced dissipation effect of shear flow using hypocercivity \cite{BCZ15},\cite{villani2009} and to prove that a large shear flow can in some sense suppress one dimension in parabolic-elliptic PKS system \eqref{KS advection} and hence make 2D $L^1$ subcritical and 3D $L^1$ critical. It is worth mentioning that the enhanced dissipation effect of shear flow is also shown to be important for understanding the stability of the Couette flow in the 2D and 3D Navier-Stokes equations at high Reynolds number,\cite{BMV14},\cite{BGM15I},\cite{BGM15II},\cite{BGM15III}.

In the parabolic-parabolic setting ($\ep=1$), the situation is different. The mixing of the  shear flow has both stabilizing and destabilizing effect on the system \eqref{KS advection}. On the one hand, same as in the parabolic-elliptic case, mixing enhances the dissipation in the micro-organism evolution equation \eqref{KS_1} and hence stabilizes the dynamics. On the other hand, the extra shear flow advection term $Au(y)\pa_x c$ in the chemo-attractant evolution \eqref{KS_2} creates large gradient in the chemical density $c$, which in turn destabilizes the dynamics through the aggregation nonlinearity $\na\cdot(\na c n)$ in the micro-organism evolution \eqref{KS_1}. It is worth noting that this shear flow destabilizing effect does not exist in the parabolic-elliptic regime due to the fast relaxation of chemical density to equilibrium. 
As a result, it is reasonable to expect that an extra smallness assumption is needed to control the mixing destabilizing effect. In this paper, it is assumed that the $x$-dependent part of the initial chemical gradient is small. Since only the $x$-dependent part of $\pay c$ is strongly forced by the shear flow, this smallness restriction is sufficient to control the growth of the chemo-attractant gradient and hence keep the aggregation nonlinearity in  \eqref{KS_1} bounded independent of $A$. Now the situation is similar to the parabolic-elliptic case, hence one can show suppression of chemotactic blow-up through shear flow.


Denote the following projections for function $g(x,y)$:
\begin{align*}
g_0(y)=\frac{1}{2\pi}\int_{-\pi}^\pi g(x,y)dx,\quad g_{\neq}(x,y)=g(x,y)-g_0(y).
\end{align*}
The main theorem of this paper is as follows.
\begin{theorem} \label{thm:2D}
Let $u \in C^3(\rr)$ be a strictly monotone shear flow whose derivative approaches nonzero numbers at $\pm \infty$ and $||u'||_{W^{2,\infty}}<\infty$. Consider the equation 
\eqref{KS advection} subject to initial condition $n_{in} \in H^1 \cap W^{1,\infty}(\Torus\times\rr)$, $c_{in}\in H^2\cap W^{1,\infty}(\Torus\times \rr), \enskip ||\na(c_{in})_{\neq}||_{H^1\cap W^{1,\infty}}\lesssim_u A^{-q},\enskip q>1/2$. Then there exists an $A_0 = A_0(u,\norm{n_{in}}_{H^1\cap W^{1,\infty}},||\na c_{in}||_{H^1\cap W^{1,\infty}})$ such that if $A > A_0$, the solution to \eqref{KS advection} is global in time.
\end{theorem}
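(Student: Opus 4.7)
\medskip

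\noindent\textbf{Proof plan.} My approach is to decompose every unknown into its $x$-mean (subscript $0$) and $x$-fluctuation (subscript $\neq$) parts and then run a bootstrap argument in which the fluctuating parts stay of size $O(A^{-q})$ for all time while the mean parts satisfy an essentially one-dimensional Keller--Segel system, which is $L^1$ subcritical. Projecting \eqref{KS_1}--\eqref{KS_2} onto the two modes yields a system of the schematic form
\be
\pa_t n_0 - \pa_{yy}n_0 &=& -\pa_y\bigl(n_0\pa_y c_0\bigr) - \pa_y\overline{(n_{\neq}\pa_y c_{\neq})},\\
\pa_t c_0 - \pa_{yy}c_0 + c_0 &=& n_0,\\
\pa_t n_{\neq} + Au(y)\pa_x n_{\neq} - \de n_{\neq} &=& -\na\cdot(n_0\na c_{\neq}) -\na\cdot(n_{\neq}\na c_0) -\bigl(\na\cdot(n_{\neq}\na c_{\neq})\bigr)_{\neq},\\
\pa_t c_{\neq} + Au(y)\pa_x c_{\neq} - \de c_{\neq} + c_{\neq} &=& n_{\neq}.
\ee
The first two equations are 1D parabolic-parabolic Patlak--Keller--Segel with a forcing term $\pa_y\overline{(n_{\neq}\pa_y c_{\neq})}$; in 1D the problem is $L^1$ subcritical, so provided this forcing is integrable in time with small enough bound (guaranteed, heuristically, by the factor $A^{-2q}$), standard energy methods give uniform control of $\norm{n_0}_{H^1\cap W^{1,\infty}}$ and $\norm{\na c_0}_{H^1\cap W^{1,\infty}}$.

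The second stage is to invoke the hypocoercivity/enhanced dissipation machinery of \cite{BCZ15,BedrossianHe16} for the advection-diffusion operators $\pa_t + Au(y)\pa_x -\de$ and $\pa_t + Au(y)\pa_x -\de + 1$ acting on the zero-$x$-mean subspace. Since $u'$ is bounded away from zero at infinity and $u\in C^3$, a suitably weighted energy (with commutator weights built from $\pa_x$ and $u'\pa_x$) furnishes a decay rate of order $A^{1/2}$ on $n_{\neq}$ and $c_{\neq}$. I would then set up a continuity argument on the time interval $[0,T^*)$ with the bootstrap hypotheses
\bel
\label{plan:boot}
\norm{n_{\neq}}_{X}(t) \le 2\de_0 A^{-q},\qquad \norm{\na c_{\neq}}_{Y}(t)\le 2\de_0 A^{-q},
\eel
for suitable hypocoercive norms $X,Y$ controlling $H^1\cap W^{1,\infty}$, and aim to upgrade the factors of $2$ to $1$.

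For $n_{\neq}$, the shear enhancement dominates and the forcing terms $\na\cdot(n_0\na c_{\neq})$, $\na\cdot(n_{\neq}\na c_0)$, $(\na\cdot(n_{\neq}\na c_{\neq}))_{\neq}$ are bounded by products of the mean-part bounds and the bootstrap smallness; the net gain is a factor $A^{-1/2}$ from hypocoercivity against a forcing of size $O(A^{-q})$. The delicate equation is the one for $c_{\neq}$, since chemical relaxation alone provides no quantitative smallness and the source $n_{\neq}$ must itself be integrated against the hypocoercive decay to regain $A^{-q}$; this is where the initial smallness $\norm{\na(c_{in})_{\neq}}_{H^1\cap W^{1,\infty}}\lesssim A^{-q}$ is used in an essential way, and where the constraint $q>1/2$ arises (balancing $A^{-q}$ against the $A^{-1/2}$ gained from enhanced dissipation of the source $n_{\neq}$).

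The main obstacle will be closing the estimate on $\na c_{\neq}$ in $W^{1,\infty}$: the chemical equation has no nonlinear dissipation and the aggregation term $\pa_y\bigl(n_{\neq}\pa_y c_{\neq}\bigr)$ that appears in \eqref{KS_1} requires control of $\pa_y c_{\neq}$ in an $L^\infty$-based norm, yet $\na c_{\neq}$ is forced by the shear advection term $Au\pa_x c$, which a priori grows like $A$. To overcome this I would time-integrate along the characteristics of the shear and use the hypocoercive norm together with a Duhamel representation for $c_{\neq}$ in terms of $n_{\neq}$, so that one dissipative factor of $A^{-1/2}$ can be extracted from the $n_{\neq}$ source to compensate. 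Once both estimates in \eqref{plan:boot} are strictly improved for $A>A_0$ large, the bootstrap closes, $T^*=\infty$, and the global regularity of $n$ follows from the uniform $H^1\cap W^{1,\infty}$ bound on $n_0+n_{\neq}$.
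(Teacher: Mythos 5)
Your outline correctly identifies the high-level skeleton (zero/nonzero-mode decomposition, hypocoercivity for the shear-advected fluctuations, a bootstrap, and the need for initial smallness of $\nabla(c_{in})_{\neq}$ to tame the shear destabilization of the chemical), but several of the load-bearing steps are wrong or missing in ways that would prevent the argument from closing.

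First, your bootstrap ansatz \eqref{plan:boot} presumes that $n_{\neq}$ itself is $O(A^{-q})$ throughout. This is not true and cannot be assumed: the hypothesis on the data only makes $\nabla(c_{in})_{\neq}$ small; $n_{in}$ is merely bounded in $H^1\cap W^{1,\infty}$, so $n_{\neq}(0)$ is $O(1)$. The correct bootstrap quantity for $n_{\neq}$ (as in the paper) is exponential-in-time decay with rate $\eta/A^{1/3}$, not smallness in $A$. With your ansatz, the ``forcing is integrable in time with small bound $A^{-2q}$'' statement for the mean equation is unjustified; the actual control of the forcing $(\nabla\cdot(\nabla c_{\neq}n_{\neq}))_0$ comes from integrating the exponential enhanced-dissipation decay in time, which gives a bound like $A^{-2/3}$ after the time rescaling, not from a pointwise $A^{-q}$ smallness of $n_{\neq}$. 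Relatedly, the claimed rate $A^{1/2}$ is not the correct shear enhanced-dissipation rate for a nondegenerate monotone shear: the hypocoercive gain is $A^{-1/3}$ (in the rescaled time), and this mismatch propagates into your accounting of why $q>1/2$ is the right threshold. In the paper the $q>1/2$ condition enters through the weight $A|k|$ in the coupled functional, which forces $A\|\nabla(c_{in})_{\neq}\|_{H^1}^2\lesssim A^{1-2q}\le 1$ for the functional to be initially $O(1)$.

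Second, the central technical obstruction --- the term $-Au'(y)\pa_x c_{\neq}$ in the equation for $\pa_y c_{\neq}$, which grows like $A$ and feeds back into the aggregation nonlinearity --- is not handled by a Duhamel/characteristics argument in the paper. The paper's key new idea is to build a single coupled hypocoercivity functional $\mathcal{F}_k=\Phi_k[n_{\neq}]+\Phi_k[\pa_y c_{\neq}]+\Phi_k[\pa_x c_{\neq}]+A|k|\,\Phi_k[c_{\neq}]$ whose last term, weighted by $A|k|$, produces negative terms large enough to absorb the shear destabilization. That weight is what makes the loop close; you do not propose anything playing its role. Finally, because the enhanced dissipation is too weak for $t\lesssim A^{1/3}$, the paper splits the uniform $L^\infty$ estimates of $n$ and $\nabla c_{\neq}$ into an initial time layer $[0,A^{1/3+\ep}]$ (handled by $L^p$ comparison arguments and a weighted Moser--Alikakos iteration, also requiring propagation of an auxiliary bound on $\pa_x n$ in $L^\infty$) and a long-time regime where the exponential decay kicks in. Your plan omits this split, and without it the $W^{1,\infty}$ control of $\nabla c_{\neq}$ at early times --- exactly where the shear destabilization is unmitigated --- would not close.
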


We make several remarks concerning the main theorems.
\begin{rmk} In the theorem, we assume that $||\na(c_{in})_{\neq}||_{H^1\cap W^{1,\infty}}\lesssim A^{-q},\enskip q>1/2$ in order to control the shear flow destabilizing effect. A special case of this is $c_{in}\equiv0$, which  corresponds to the situation that at the initial time of the chemotaxis experiment, no chemo-attractant exists in the environment.
\end{rmk}
\begin{rmk} The difficulty is twofold. First we need to construct a hypocercivity functional adapted to the parabolic-parabolic PKS equation, which is significantly more subtle than the one in the parabolic-elliptic case \cite{BedrossianHe16}. Secondly, one needs to control $||\na c_{\neq}||_\infty$ uniformly independent of $A$ for all time. This is delicate due to the shear flow destabilizing effect.
\end{rmk}

\begin{rmk} Since the shear flow is a stationary solution to the Navier-Stokes equation, this result is the first step to proving the suppression of blow-up for the parabolic-parabolic Keller-Segel-Navier-Stokes system.
\end{rmk}


\subsection{Notations}
\subsubsection{Miscellaneous}
Given quantities $X,Y$, if there exists a constant $B$ such that $X \leq BY$, we often write $X\lesssim Y$.
We will moreover use the notation $\lan x\ran:=(1+|x|^2)^{1/2}$.

\subsubsection{Fourier Analysis}
For $f(x,y)$ we define the Fourier transform $\wh{f}(k,y)$ only in terms of variable $x$, and the inverse Fourier transform as follows:
\begin{equation*}
\wh{f}(k,y) = \frac{1}{2\pi}\int_{\Torus} e^{-ikx} f(x,y) dx, \quad \check{g}(x,y) = \sum_{k=-\infty}^\infty g(k,y) e^{ikx}.
\end{equation*}
Define the following orthogonal projections:
\begin{align*}
f_0(t,y) &= \frac{1}{2\pi}\int_{-\pi}^\pi f(t,x,y) dx ,\\
f_{\neq}(t,x,y)& = f(t,x,y) - f_0(t,y).
\end{align*}
Here '$0$' and '$\neq$' stand for ``zero frequency'' and ``non-zero frequencies''.
For any measurable function $m(k)$, we define the Fourier multiplier $m(\pa_{x})f := (m(k)\hat{f}(k,y))^{\vee}$.
\subsubsection{Functional spaces}
The norm for the $L^p$ space is denoted as $||\cdot||_p$ or $||\cdot||_{L^p(\cdot)}$:
\begin{align*}
||f||_p=||f||_{L^p}=\bigg(\int |f|^p dx\bigg)^{1/p},
\end{align*} with natural adjustment when $p$ is $\infty$.
If we need to emphasize the ambient space, we use the second notation, i.e., $||n_{\neq}||_{L^p(\Torus\times\rr)}$. Otherwise, we use the first notation for the sake of simplicity. The Sobolev norm $||\cdot||_{H^s}$ is defined as follow:
\begin{align*}
||f||_{H^s}:=||\lan \grad \ran^s f||_{L^2}.
\end{align*}
For a function of space and time $f = f(t, x)$, we use the following space-time norms:
\begin{align*}
||f||_{L_t^pL_x^q}:=&||||f||_{L_x^q}||_{L_t^p},\\
||f||_{L_t^p H_x^s}:=&||||f||_{H^s_x}||_{L_t^p}.
\end{align*}

The paper is organized as follows: in section 2, we set up the bootstrap argument; in section 3, we prove the enhanced dissipation of the $x$-dependent part of the solution; in section 4, we prove the $L_t^2\dot H_{x,y}^1$ estimate of x-dependent part the micro-organism density; in section 5, we estimate the $x$ independent part of the solution; in section 6, we prove the uniform in time $L^\infty$ estimate of the solution.

\textbf{Acknowledgement} Research was supported by NSF grants DMS16-13911, RNMS11-07444 (KINet), ONR grant N00014-1512094, the Sloan research fellowship FG-2015-66019 
and the Ann G. Wylie Dissertation Fellowship. The author would like to thank his advisor Eitan Tadmor for suggesting this problem and his patient guidance during the years. The author would also like to thank Jacob Bedrossian for many fruitful discussions and careful reading of the draft. The main part of the work was completed when the author was visiting ETH Institute for Theoretical Studies (ETH-ITS) , he would like to thank the institute for the support and hospitality. 

\section{Bootstrap}
As in the paper \cite{BedrossianHe16}, since the enhanced dissipation does not act on the nullspace of the advection term, it is reasonable to rescale in time and decompose the solution as follows \begin{subequations}
\begin{align}\label{zeromode}
\pa_t n_0+&\frac{1}{A}\pa_y(\pa_y c_0 n_0)+\frac{1}{A}(\na\cdot(\na c_{\neq} n_{\neq}))_0=\frac{1}{A}\pa_{yy} n_0,\\
\pa_t c_0=&\frac{1}{A}\de c_0+\frac{1}{A}n_0-\frac{1}{A}c_0;
\end{align}
\end{subequations}
and,
\begin{subequations}
\begin{align}
\pa_t n_{\neq} +&u(y)\pa_x n_{\neq} +\frac{1}{A}\na\cdot(\na c_{\neq} n_0)+\frac{1}{A}\pay(\pay c_0n_{\neq})
+\frac{1}{A}(\na\cdot(\na c_{\neq} n_{\neq}))_{\neq}= \frac{1}{A}\de n_{\neq},\label{nonzeromodeN}\\
\pa_t c_{\neq}+&u(y)\pa_x c_{\neq}=\frac{1}{A}\de c_{\neq}+\frac{1}{A}n_{\neq}-\frac{1}{A}c_{\neq}\label{nonzeromodeC}.
\end{align}
\end{subequations}
As in the paper \cite{BCZ15}, it is convenient to consider equations \eqref{nonzeromodeN} and \eqref{nonzeromodeC} after applying the Fourier transform \emph{only in $x$}.
Applying to both sides of (\ref{nonzeromodeN},\ref{nonzeromodeC}) we have,
\begin{subequations}
\begin{align}\label{nonzeromodeF}
\pa_t \widehat{n}_{k}+&NL_k+L_k+u(y)ik \widehat{n}_{k}=\frac{1}{A}(\pa_{yy}-|k|^2) \widehat n_{k},\\
\pa_t \wh{c}_k+&u(y)ik\wh c_k=\frac{1}{A}(\pa_{yy}-|k|^2) \wh{c}_{k}+\frac{1}{A}\wh{n}_{k}-\frac{1}{A}\wh{c}_{k},\quad k\neq 0,
\end{align}
\end{subequations}
where $L_k,NL_k$ are defined as follows:
\begin{align}
NL_k:=&\frac{1}{A}\sum_{\ell \neq 0,k} \pa_y(\pa_y \widehat{c}_{k-\ell} \widehat{n}_\ell) - \frac{1}{A}k\sum_{\ell \neq 0,k}(k-\ell) \widehat{c}_{k-\ell} \widehat{n}_\ell, \label{NL} \\
L_k:=&\frac{1}{A}\pa_y(\pa_y c_0 \widehat{n}_{k})+\frac{1}{A}\na\cdot(\na \widehat{c}_{k} n_0)=\frac{1}{A}\pa_y(\pa_y c_0 \widehat{n}_{k})-\frac{1}{A}k^2\wh{c}_kn_0+\frac{1}{A}\pa_y(\pa_y \widehat{c}_{k} n_0). \label{L}
\end{align}
Here, the $L$ refers to ``linear with respect to the nonzero frequencies'' and $NL$ refers to ``nonlinear with respect to the nonzero frequencies''.

As is standard in the study of nonlinear mixing, we use a bootstrap argument to prove the main theorem. For constants $C_{ED},C_{n_0,L^2}$, $C_{n_0,\dot{H}^1}$, $C_{n,\infty}$, $C_{\na c_{\neq},\infty}$ and $A_0$ determined by the proof, define $T_\star$ to be the end-point of the largest interval $[0,T_\star]$ such that the following hypotheses hold for all $t \leq T_\star$:

\begin{subequations}
(1) Nonzero mode $L_t^2 \dot{H}_{x,y}^1$ estimate:
\bel\ba\label{H1}
\frac{1}{A}\int_0^{T_\star}||\nabla_{x,y} n_{\neq}||_2^2dt\leq& 8||n_{in}||_2^2;\\
\ea\eel

(2) Nonzero mode enhanced dissipation estimate:
\bel\label{H2}\ba
||n_{\neq}(t)||_2^2+||\na c_{\neq}(t)||_2^2\leq& 4C_{ED}(||n_{in}||_{H^1}^2+1)e^{-\frac{\eta t}{A^{1/3}}}, \quad\forall t < T_\star,
\ea\eel
where $\eta$ is a small constant depending only on $u$.

(3) Uniform in time estimates on the zero mode:
\bel\ba\label{H3}
||\pa_y c_0||_{L^\infty(0,T_\star; L^\infty_{y})}+||n_0||_{L^\infty_t(0,T_\star;L^2_{y})}\leq& 4C_{n_0,L^2},\\
||\pa_y n_0||_{L^\infty_t(0,T_\star;L^2_{y})}\leq& 4C_{n_0,\dot{H}^1};\\
\ea\eel

(4) $L^\infty$ estimate of the solution $n$:
\bel\label{H4}\ba
||n||_{L^\infty_t(0,T_\star;L^\infty_{x,y})}\leq& 4C_{n,\infty};
\ea\eel

(5) $L^\infty$ estimate of the $x$-dependent part of the chemical gradient $\na c_{\neq}$:
\bel\label{H5}\ba
||\na c_{\neq}||_{L^\infty_t(0,T_\star;L^\infty_{x,y})}\leq& 4C_{\na c_{\neq},\infty}.
\ea\eel

Furthermore, we define the following constant to simplify the notation:
\begin{equation}
\CC:=1+M+C_{ED}^{1/2}||n_{in}||_{H^1}+C_{n_0,L^2}+C_{n,\infty}+C_{\na c_{\neq},\infty}+||\na( c_{in})_0||_{H^1\cap W^{1,\infty}}.
\end{equation}
\end{subequations}
Note that $C_{n_0, \dot H^1}$ is not included in $\CC$.

The goal is to prove the following improvement to the above hypotheses:
\begin{pro}\label{prop:boot2D}
For all $n_{in}$, $c_{in}$ and $u$ satisfying the assumption of Theorem \ref{thm:2D}, there exists an $A_0=A_0(u,\norm{n_{in}}_{H^1\cap L^\infty},\norm{\na c_{in}}_{H^1\cap W^{1,\infty}})$ such that if $A > A_0$ then the following conclusions hold on the interval $[0,T_\star]$:

\noindent
\begin{subequations}
(1) \bel\ba\label{ctrl:L2H1}
\frac{1}{A}\int_0^{T_\star}||\nabla_{x,y} {n}_{\neq}||_2^2dt\leq& 4||n_{in}||_2^2;\\
\ea\eel
(2)
 \bel\label{ctrl:ED}\ba
||{n}_{\neq}(t)||_2^2+||\na c_{\neq}(t)||_2^2\leq& 2C_{ED}(||n_{in}||_{H^1}^2+1)e^{-\frac{\eta t}{A^{1/3}}}, \quad\forall t < T_\star;
\ea\eel
(3) \bel\ba\label{ctrl:ZeroMd}
||\pay c_0||_{L^\infty_t(0,T_\star;L^\infty_y)}+||{n}_0||_{L^\infty_t(0,T_\star; L^2_{y})}\leq& 2C_{n_0,L^2},\\
||\pa_y {n}_0||_{L^\infty_t(0,T_\star;L^2_{y})}\leq& 2C_{n_0,\dot{H}^1};\\
\ea\eel
(4) \bel\label{ctrl:n_Linf}\ba
||n||_{L^\infty(0,T_\star;L^\infty_{x,y})}\leq& 2C_{n,\infty};
\ea\eel
(5) \bel\label{ctrl:na_c_Linf}\ba
||\na c_{\neq}||_{L^\infty_t(0,T_\star;L^\infty_{x,y})}\leq& 2C_{\na c_{\neq},\infty}.
\ea\eel
\end{subequations}
\end{pro}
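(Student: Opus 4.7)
The proposition is a standard bootstrap improvement: five hypotheses, five separate improvements, all proved under the same assumption $A>A_0$ and all exploiting either the explicit $1/A$ factor on every nonlinear or ``zero-times-nonzero'' coupling, or the enhanced dissipation produced by the shear. By local well-posedness and continuity in time, $T_\star>0$, and the issue is purely quantitative. I would treat the five bounds in the order \eqref{ctrl:ED}, \eqref{ctrl:L2H1}, \eqref{ctrl:ZeroMd}, \eqref{ctrl:n_Linf}, \eqref{ctrl:na_c_Linf}, because the $L^\infty$ estimates require the others as inputs, whereas \eqref{ctrl:ED} and \eqref{ctrl:L2H1} can be closed from the hypotheses directly.

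For \eqref{ctrl:L2H1} I would test \eqref{nonzeromodeN} against $\wh n_k$ and sum over $k\neq 0$. The transport term $u(y)ik\wh n_k$ is skew-adjoint and drops, the diffusion produces $\frac{1}{A}\|\nabla_{x,y} n_{\neq}\|_2^2$, and the source terms are $NL_k$ and $L_k$ from \eqref{NL}--\eqref{L}. Every one of them carries a $1/A$ prefactor and factors of $\na c_{\neq}$, $\pa_y c_0$, $n_0$, $n_{\neq}$ that are controlled by \eqref{H3}--\eqref{H5}; after integration by parts and Cauchy--Schwarz one obtains terms of the form $\frac{1}{A}\CC\|\nabla_{x,y} n_{\neq}\|_2\|n_{\neq}\|_2$, which are absorbed by the diffusion for $A$ large. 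Integrating in time closes \eqref{ctrl:L2H1}. For \eqref{ctrl:ED} I would follow the parabolic-elliptic blueprint of \cite{BedrossianHe16,BCZ15} and construct a hypocercivity functional, but now adapted to the parabolic-parabolic system: something of the form
\[
\Phi(t) := \|n_{\neq}\|_2^2 + \alpha \|\nabla c_{\neq}\|_2^2 + \beta\,\langle\tfrac{\pa_y}{\lan\pa_y\ran}n_{\neq},u'(y)\pa_x^{-1}n_{\neq}\rangle + (\text{analogous mixed term for }c_{\neq}),
\]
with $\alpha,\beta$ small, tuned so that the streaming term $u(y)\pa_x$ pumps energy from the zeroth-order part into the $\pa_y$ derivative and the resulting coercivity dominates with rate $\eta A^{-1/3}$. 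The essential new feature compared to the parabolic-elliptic case is that $c_{\neq}$ is \emph{itself} advected, which both produces new off-diagonal contributions that must be controlled and, reassuringly, propagates enhanced dissipation directly to $c_{\neq}$; the nonlinear and $L_k$ contributions are then absorbed using \eqref{H3}--\eqref{H5} together with \eqref{ctrl:L2H1}.

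For \eqref{ctrl:ZeroMd} I would work with the 1D equations for $n_0$ and $c_0$. The forcing $\frac{1}{A}(\nabla\cdot(\nabla c_{\neq} n_{\neq}))_0$ has, by \eqref{H2}, double-exponential decay $\sim e^{-2\eta t/A^{1/3}}$ in $L^1_t$, whose integral is $O(A^{-2/3})$ up to the constant $C_{ED}(\|n_{in}\|_{H^1}^2+1)$; standard parabolic energy estimates on $n_0$ in $L^2_y$ and $\dot H^1_y$ and on $\pa_y c_0$ in $L^\infty_y$ (via Sobolev embedding from $H^2_y$) then close the bounds independently of $A$.

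The main obstacle is \eqref{ctrl:na_c_Linf}, which is where the smallness assumption $\|\na(c_{in})_{\neq}\|_{H^1\cap W^{1,\infty}}\lesssim A^{-q}$, $q>1/2$, is consumed. I would write $\na c_{\neq}$ via Duhamel applied to \eqref{nonzeromodeC}, in the form
\[
\na c_{\neq}(t) = \na\, S(t,0)(c_{in})_{\neq} + \frac{1}{A}\int_0^t \na\, S(t,s)\bigl(n_{\neq}(s)-c_{\neq}(s)\bigr)\,ds,
\]
where $S(t,s)$ is the propagator of $\pa_t + u(y)\pa_x - \frac{1}{A}\Delta + \frac{1}{A}$. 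The first term is bounded by $A^{-q}$ using the initial smallness and maximal regularity for $S$. For the forcing, enhanced dissipation \eqref{H2} gives $\|n_{\neq}(s)\|_2 + \|\na c_{\neq}(s)\|_2 \lesssim e^{-\eta s/(2A^{1/3})}$, and combining the heat-type gain $(t-s)^{-3/4}$ needed to pass from $L^2$ to $L^\infty$ with the $1/A$ prefactor and the integral $\int_0^\infty e^{-\eta s/(2A^{1/3})}\,ds = O(A^{1/3})$ produces a contribution of order $A^{-1/2}\CC$ (more carefully, $A^{1/3-1}\cdot A^{1/3}$ kind of book-keeping, improved via interpolation between the $L^\infty$ hypothesis \eqref{H5} and the decaying $L^2$ bound). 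Thus the condition $q>1/2$ is precisely what is needed so that the initial piece dominates the shear-induced growth, yielding the improvement by a factor of $2$. Finally \eqref{ctrl:n_Linf} follows by a Moser/De Giorgi iteration on \eqref{KS_1} using the already-established bounds on $\na c_{\neq}$, $\na c_0$ and $\|n\|_2$.
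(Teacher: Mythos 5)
Your overall architecture (five improvements, bootstrap, enhanced dissipation first, then $L^2_t\dot H^1$, then zero mode, then $L^\infty$) matches the paper's, and your treatments of \eqref{ctrl:L2H1} and (roughly) \eqref{ctrl:ZeroMd} are in the same spirit. The two critical steps, however, diverge from what the paper actually does, and in both cases I think the gap is real rather than cosmetic.

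\textbf{The hypocoercivity functional for \eqref{ctrl:ED}.} The paper does not use a single mixed term for $n_{\neq}$ plus ``an analogous mixed term for $c_{\neq}$.'' It uses the BCZ building block $\Phi_k[\cdot]$ applied to \emph{four} quantities, $\mathcal{F}_k = \Phi_k[n_{\neq}] + \Phi_k[\pa_y c_{\neq}] + \Phi_k[\pa_x c_{\neq}] + A|k|\Phi_k[c_{\neq}]$, and the $A|k|$ weight on the last piece is the whole point. When you differentiate $\Phi_k[\pa_y c_{\neq}]$, the equation for $\pa_y\wh{c}_k$ contains $-u'(y)ik\wh{c}_k$, which is $O(1)$ in $A$ (no $1/A$ prefactor), and its pairing with $\pa_y\wh{c}_k$ produces terms of size $\sim A^{2/3}|k|^{8/3}\|\sqrt\beta u'\wh c_k\|_2^2$ and similar, which the negative terms from $\mathcal{N}_k[\pa_y c_{\neq}]$ alone cannot absorb. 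The extra $A|k|\Phi_k[c_{\neq}]$ supplies the crucial negative term $-A|k|^3\|\sqrt\beta u'\wh c_k\|_2^2$ that absorbs them; in turn its own cross terms with $\wh n_k/A$ are absorbed by $\mathcal{N}_k[n_{\neq}]$ because of the $1/A$ already present. Your functional, with $\alpha\|\na c_{\neq}\|_2^2$ undifferentiated and no $A$-weighted $\Phi_k[c_{\neq}]$, has nothing to cancel the shear destabilizing term $-u'ik\wh c_k$ against. ``Tuning $\alpha,\beta$ small'' cannot fix this, because the problematic term carries \emph{positive} powers of $A$ and $|k|$ relative to the good terms. This is precisely the new difficulty of the parabolic-parabolic case that the paper flags in Remark 2.

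\textbf{The $L^\infty$ bound on $\na c_{\neq}$, \eqref{ctrl:na_c_Linf}.} Your Duhamel formula $\na c_{\neq}(t) = \na S(t,0)(c_{in})_{\neq} + \frac{1}{A}\int_0^t \na S(t,s)(n_{\neq}-c_{\neq})\,ds$ silently drops the commutator $[\pa_y, S(t,s)]$. Since $S$ is the flow of $\pa_t + u(y)\pa_x - \frac1A\Delta + \frac1A$ and $[\pa_y, u(y)\pa_x] = u'(y)\pa_x$, the operator $\pa_y S(t,s)$ is \emph{not} $S(t,s)\pa_y$; the commutator grows (roughly linearly) in $t-s$ and is exactly the shear destabilizing effect. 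Your claim that ``the first term is bounded by $A^{-q}$ using the initial smallness and maximal regularity for $S$'' therefore does not hold as stated: even with $\na(c_{in})_{\neq}$ small, $\pa_y S(t,0)(c_{in})_{\neq}$ gains a factor $\sim t\,\|u'\pa_x(c_{in})_{\neq}\|$ from the commutator, and for $t$ up to $A^{1/3+\ep}$ this must be tracked carefully. Also, the kernel gain you invoke, $(t-s)^{-3/4}$ for $L^2\to L^\infty$ \emph{with a gradient}, is too weak: in 2D, $\|\na e^{\tau\Delta/A}\|_{L^2\to L^\infty}\lesssim (\tau/A)^{-1}$, which is not integrable at $\tau=0$, so the naive Duhamel estimate does not close without interpolation against a genuine higher-regularity norm; your parenthetical hand-wave about interpolating against \eqref{H5} does not resolve this. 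The paper instead splits into an initial time layer $[0, A^{1/3+\ep}]$ and a long time regime, runs an auxiliary sub-bootstrap on $\|\pa_x n\|_\infty$ to get higher regularity, and introduces the time-weighted norm $\mathcal{F}_p(t) = \|\pa_y c_{\neq}(t)\|_p / e^{G_\infty(t)}$, whose exponential weight produces an explicit negative term that compensates the commutator growth; the $q>1/2$ hypothesis is consumed inside $G_\infty$. In the long time regime, enhanced dissipation makes $\|u'\pa_x c_{\neq}\|_2 A$ \emph{bounded} (estimate (6.30) in the paper), which is the analogue of your $e^{-\eta s/(2A^{1/3})}\cdot A$ bookkeeping, but the integrable kernel issue and the commutator issue remain unaddressed in your version of the initial-layer argument.

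For \eqref{ctrl:ZeroMd}, note that the paper does not simply do ``standard parabolic energy estimates'': it needs the Nash inequality on $\rr$ to turn the nonlinear term into a self-defeating cubic in $\|n_0\|_2$, and it needs a $G(t)$-shifted comparison argument to absorb the (time-integrable but not small pointwise) forcing $(\na\cdot(\na c_{\neq}n_{\neq}))_0$. Worth spelling out, because without the Nash step the bound would depend on $A$ in a way the bootstrap constants cannot.
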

\begin{rmk}
The proposition together with the local wellposedness of the equation \eqref{KS advection} implies the Theorem \ref{thm:2D}.
\end{rmk}
\begin{rmk} The constants in the proof are determined in the following order
\begin{equation}
C_{ED}\Rightarrow C_{n_0,L^2}\Rightarrow C_{n,\infty}, C_{\na c_{\neq},\infty}\Rightarrow C_{n_0,\dot H^1}\Rightarrow A_0.
\end{equation}
The magnitude of the flow $A_0$ will be chosen large depending on the constants in the hypotheses and the intermediate constants in the proof.

\end{rmk}

\begin{rmk}
We need to control the shear flow destabilizing effect in the proof of \eqref{ctrl:ED}, \eqref{ctrl:n_Linf} and \eqref{ctrl:na_c_Linf}.
\end{rmk}
\section{Enhanced dissipation estimate \eqref{ctrl:ED}}
\subsection{Enhanced dissipation functional $\mathcal{F}$}
In this subsection, we construct the functional $\mathcal{F}$ to exploit the enhanced dissipation in the equation \eqref{KS advection}. Similar to the paper \cite{BedrossianHe16}, the following hypocoercivity functional (\cite{BCZ15},\cite{villani2009}) serves as a building block for the construction:
\bel\label{Phi k}
\Phi_k[f(t)] =||\widehat{f}_k(t)||_2^2+||\sqrt{\al}\pa_y \widehat{f}_k(t)||_2^2+2kRe\lan i\beta u' \widehat{f}_k(t),\pa_y\widehat{f}_k
(t)\ran+|k|^2||\sqrt{\gamma}u' \widehat{f}_k(t)||_2^2;
\eel
\bel\label{Phi}
\Phi[f(t)]=\sum_{k\neq 0} \Phi_k[f(t)] =||f_{\neq}(t)||_{2}^2+||\sqrt{\al}\pa_y f_{\neq}(t)||_2^2 + 2\lan \beta u' \pa_xf_{\neq}(t),\pa_yf_{\neq}
(t)\ran+||\sqrt{\gamma}u'|\pa_x| f_{\neq}(t)||_2^2.
\eel
Here $\alpha,\beta$, and $\gamma$ are $k$-dependent constants (and hence should be interpreted as Fourier multipliers) satisfying
\begin{subequations} \label{def:abg}
\begin{align}
\alpha(A,k) & = \epsilon_\alpha A^{-2/3}\abs{k}^{-2/3} \\
\beta(A,k) & = \epsilon_\beta A^{-1/3}\abs{k}^{-4/3} \\
\gamma(A,k) & = \epsilon_\gamma \abs{k}^{-2},
\end{align}
\end{subequations}
where $\epsilon_\alpha$, $\epsilon_\beta$, and $\epsilon_{\gamma}$ are small constants depending only on $u$ chosen in \cite{BCZ15}. Among other things, these are chosen such that 8$\beta^2 \leq \alpha \gamma$. Since we are concerned with strictly monotone shear flows instead of nondegenerate shear flows, we employ slightly different multipliers $\al,\beta,\gamma$ from  the ones in the paper \cite{BedrossianHe16}. Notice that in \cite{BCZ15} for treating general situations one must also take $\alpha,\beta$, and $\gamma$ to be $y$-dependent, however, as suggested by \cite{BeckWayne11}, this is not necessary to treat strictly monotone shear flows  with $y \in \Real$.
The parameters $\epsilon_\alpha$, $\epsilon_\beta$, and $\epsilon_{\gamma}$ are tuned such that,
\begin{align}
\Phi_k[f] \approx \norm{\widehat{f}_k}_2^2+\norm{\sqrt{\al}\pa_y \widehat{f}_k}_2^2+|k|^2\norm{\sqrt{\gamma}u'\widehat{f}_k}_2^2,
\end{align}
and hence
\begin{align}\Phi_k[f] \approx \norm{\widehat{f}_k}_{2}^2 +  \abs{k}^{-2/3} A^{-2/3} \norm{\partial_y \wh{f}_k}_2^2. \label{ineq:PhiEquiv}
\end{align}
As a result, $\Phi_k[f(t)]$ is equivalent to the $H^1$ norm of $f_{k}$ but with constants that depend on $A$ and $k$.
The primary step in the results of \cite{BCZ15} is that for $u(y)$ satisfying the hypotheses in Theorem \ref{thm:2D}, then for the passive scalar equation on $\mathbb{T}\times \Real$,
\bel\label{PS}
\pa_t f+u(y)\pa_x f= \frac{1}{A}\de f,
\eel
the norm $\Phi_k[f(t)]$ satisfies the following differential inequality for some small constant $\tilde{\ep}$  independent of $k, A$ (but depending on $u$):
\be
\frac{d}{dt}\Phi_k[f(t)]\leq -\tilde{\ep}\frac{|k|^{2/3}}{A^{1/3}}\Phi_k[f(t)].
\ee
Note that the decay rate of the functional $\Phi_k[f]$ $\big(=\frac{\tilde{\ep}}{A^{1/3}}\big)$ is much larger than the classical heat decay rate $\big(=\frac{1}{A}\big)$ for the passive scalar equation \eqref{PS} when $A$ is chosen big. This is the enhanced dissipation effect of the shear flow.

Recall the estimate of the time evolution of $\Phi_k[f(t)]$ in \cite{BCZ15}.
\begin{pro}(\cite{BCZ15})\label{thm d/dt Phi k f}
Consider the solution to the passive scalar equation \eqref{PS}. For $\tilde{\epsilon}$ sufficiently small depending only on $u$, there holds,
\begin{align}
\frac{d}{dt}\Phi_k[f(t)] \leq&-\frac{\tilde{\ep}}{2} \frac{|k|^{2/3}}{A^{1/3}}||\widehat{f}_k||_2^2-\frac{\tilde{\ep}}{2}\frac{|k|^{2/3}}{A^{1/3}}||\sqrt{\al}\pa_y \widehat{f}_k||_2^2-\frac{\tilde{\ep}}{2}\frac{|k|^{8/3}}{A^{1/3}}||\sqrt{\gamma}u'\widehat{f}_k||_2^2-\frac{1}{4A}||\pa_y\wh{f}_k||_2^2\nonumber\\
&-\frac{1}{2}|k|^2||\sqrt{\beta}u' \wh{f}_k||_2^2-\frac{1}{2A}|k|^2||\wh{f}_k||_2^2-\frac{1}{4A}||\sqrt{\al}\pa_{yy}\wh{f}_k||_2^2\nonumber\\
&-\frac{1}{4A}|k|^4||\sqrt{\gamma}u' \wh{f}_k||_2^2-\frac{1}{4A}|k|^2||\sqrt{\gamma}u'\pa_y\wh{f}_k||_2^2\nonumber\\
=:&\mathcal{N}_k[f].\label{ddtPhik_f}
\end{align}
\end{pro}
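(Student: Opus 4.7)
The plan is to differentiate each of the four summands of $\Phi_k[f]$ separately using the $x$-Fourier transform of the passive scalar equation $\pa_t\wh{f}_k = -iku\wh{f}_k + \frac{1}{A}(\pa_{yy}-|k|^2)\wh{f}_k$, and then collect coercive and dissipative terms. The base term $\|\wh{f}_k\|_2^2$ is immediate: since the transport is skew-adjoint, its time derivative is $-\frac{2}{A}\|\pa_y\wh{f}_k\|_2^2 - \frac{2|k|^2}{A}\|\wh{f}_k\|_2^2$. For $\|\sqrt{\al}\pa_y\wh{f}_k\|_2^2$ I first apply $\pa_y$ to the equation; the commutator $[\pa_y,u]=u'$ produces a cross term $-2\al k\,\mathrm{Im}\lan u'\wh{f}_k,\pa_y\wh{f}_k\ran$ alongside the expected diffusive contributions $-\frac{2\al}{A}\|\pa_{yy}\wh{f}_k\|_2^2 - \frac{2\al|k|^2}{A}\|\pa_y\wh{f}_k\|_2^2$. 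For $|k|^2\|\sqrt{\gamma}u'\wh{f}_k\|_2^2$ the main contributions are diffusive, of order $\frac{\gamma|k|^2}{A}\|u'\pa_y\wh{f}_k\|_2^2$ and $\frac{\gamma|k|^4}{A}\|u'\wh{f}_k\|_2^2$, together with lower-order cross terms controlled by $\|u''\|_\infty$.

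The heart of the argument is the time derivative of the cross term $2kRe\lan i\beta u'\wh{f}_k,\pa_y\wh{f}_k\ran$. Substituting the equation into both slots, the transport-transport contributions of the form $k^2\beta\int((u')^2+uu'')|\wh{f}_k|^2\,dy$ arising from the integration by parts in $\lan uu'\wh{f}_k,\pa_y\wh{f}_k\ran$ cancel between the two slots, leaving the key coercive piece $-2\beta|k|^2\|u'\wh{f}_k\|_2^2$ coming from the pairing $\lan iu'\wh{f}_k,-iku'\wh{f}_k\ran$ that involves the commutator. This is the hypocoercivity gain that drives enhanced dissipation. The remaining $O(\beta/A)$ diffusion-cross terms are split by Cauchy--Schwarz and absorbed into the $\frac{1}{A}\|\pa_y\wh{f}_k\|_2^2$ and $\frac{\al}{A}\|\pa_{yy}\wh{f}_k\|_2^2$ pieces at the cost of harmless numerical constants.

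Once the four time derivatives are added, the only remaining indefinite term is the cross term $-2\al k\,\mathrm{Im}\lan u'\wh{f}_k,\pa_y\wh{f}_k\ran$ inherited from the $\|\sqrt{\al}\pa_y\wh{f}_k\|_2^2$ piece. A weighted Young inequality distributes it between $\beta|k|^2\|u'\wh{f}_k\|_2^2$ and $\|\sqrt{\al}\pa_y\wh{f}_k\|_2^2$; the constraint $8\beta^2\leq\al\gamma$ built into the choice \eqref{def:abg} is exactly what guarantees that at least half of the coercive piece survives this absorption. With the scaling $\al=\ep_\al A^{-2/3}|k|^{-2/3}$, $\beta=\ep_\beta A^{-1/3}|k|^{-4/3}$, $\gamma=\ep_\gamma|k|^{-2}$, the surviving coercive term $\beta|k|^2\|u'\wh{f}_k\|_2^2 \sim A^{-1/3}|k|^{2/3}\|u'\wh{f}_k\|_2^2$ has precisely the enhanced dissipation rate. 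Strict monotonicity of $u$ with nondegenerate limits of $u'$ at $\pm\infty$ gives a uniform lower bound $|u'|\geq c_0>0$, which converts $\|u'\wh{f}_k\|_2^2$ into $\|\wh{f}_k\|_2^2$ up to a constant, yielding the three $\tilde\ep|k|^{2/3}/A^{1/3}$ coercive terms in $\mathcal{N}_k[f]$ once $\ep_\al,\ep_\beta,\ep_\gamma,\tilde\ep$ are taken small enough depending only on $u$.

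The main obstacle is the bookkeeping: every cross term produced by commuting $\pa_y$ or multiplication by $u'$ past the transport and diffusion operators must be absorbed into a definite coercive or diffusive term on the right-hand side, and this must remain consistent with both the structural constraint $8\beta^2\leq\al\gamma$ and the enhanced dissipation scaling. Following the scheme of \cite{BCZ15}, and exploiting the observation of \cite{BeckWayne11} that strict monotonicity allows $\al,\beta,\gamma$ to be taken $y$-independent, this bookkeeping closes and delivers the stated bound on $\frac{d}{dt}\Phi_k[f]$.
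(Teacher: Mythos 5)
The paper does not prove this proposition; it cites \cite{BCZ15} and records the resulting differential inequality, so there is no in-paper argument to compare against. Judged on its own terms, your sketch is a faithful reconstruction of the standard hypocoercivity computation from that reference: differentiate the four pieces of $\Phi_k$, observe that the transport is skew-adjoint in the base term, that $[\pa_y,u]=u'$ produces a commutator in the $\al$ piece, that the decisive coercive term $-2\beta|k|^2\|u'\wh{f}_k\|_2^2$ emerges from the second slot of the $\beta$ cross term while the transport contributions from the two slots cancel, and that strict monotonicity with nonzero limits of $u'$ gives $\inf|u'|>0$ and hence converts $\|u'\wh{f}_k\|_2$ into $\|\wh{f}_k\|_2$.

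Two small imprecisions are worth flagging. First, the $\al$ commutator is not literally ``the only remaining indefinite term'': the $\gamma$ piece produces a cross term $\propto\frac{\gamma|k|^2}{A}\lan u'u''\pa_y\wh{f}_k,\wh{f}_k\ran$, and the $\beta$ cross term paired with the diffusion produces $O(\beta/A)$ terms involving $\pa_{yy}\wh{f}_k$ (with $u''$, $u'''$ after integration by parts, which is why the hypotheses include $\|u'\|_{W^{2,\infty}}<\infty$); all of these must be absorbed. You acknowledge this implicitly under ``bookkeeping,'' but the phrase ``only remaining indefinite term'' overstates the cleanliness. Second, the inequality $8\beta^2\le\al\gamma$ primarily guarantees that $\Phi_k$ is a positive-definite quadratic form equivalent to $\|\wh{f}_k\|_2^2+\al\|\pa_y\wh{f}_k\|_2^2+\gamma|k|^2\|u'\wh{f}_k\|_2^2$; the absorption of the $\al$ commutator into $\beta|k|^2\|u'\wh{f}_k\|_2^2$ and $\tfrac{1}{A}\|\pa_y\wh{f}_k\|_2^2$ actually hinges on the separate scaling relation $\al^2 A\lesssim\beta$ (equivalently $\ep_\al^2\lesssim\ep_\beta$), which the choices \eqref{def:abg} also satisfy. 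The paper's phrase ``among other things'' signals that several such inequalities among $\ep_\al,\ep_\beta,\ep_\gamma$ are in play. Neither imprecision breaks the argument, but in a written proof both should be stated explicitly.
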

\begin{rmk}
The notation "$\mathcal{N}$" stands for "negative terms".
\end{rmk}
The functional we construct to exploit the enhanced dissipation effect in the equation \eqref{KS advection} is the following:
\begin{defn} Define the functional $\mathcal{F}$ as
\begin{align}\label{ParabolicParabolicHypoFunctional}
\mathcal{F}_k:=&\Phi_k[n_{\neq}]+\Phi_k[\pa_y c_{\neq}]+\Phi_k[\pa_x c_{\neq}]+A|k|\Phi_k[c_{\neq}];\\
\mathcal{F}:=&\sum_{k\neq0}\Phi_k[n_{\neq}]+\sum_{k\neq0}\Phi_k[\pa_y c_{\neq}]+\sum_{k\neq0}\Phi_k[\pa_x c_{\neq}]+\sum_{k\neq0}A|k|\Phi_k[c_{\neq}]=\sum_{k\neq0}\mathcal{F}_k.
\end{align}
\end{defn}

The goal in this subsection is to show that
\begin{theorem} Assume the hypothesis of Proposition \ref{prop:boot2D}. There exists a constant $\eta>0$ depending only on $u$ such that the following time decay estimate holds if $A$ is chosen large enough
\begin{align}\label{TimeDerivativeOfF}
\frac{d}{dt}\mathcal{F}
\leq -\frac{\eta}{A^{1/3}}\mathcal{F}.
\end{align}
\end{theorem}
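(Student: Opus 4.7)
The plan is to differentiate $\mathcal{F}$ in time and treat each of the four summands $\Phi_k[n_{\neq}]$, $\Phi_k[\pa_y c_{\neq}]$, $\Phi_k[\pa_x c_{\neq}]$, and $A|k|\,\Phi_k[c_{\neq}]$ as coming from a passive scalar driven by a small forcing. The base decay is produced by applying Proposition \ref{thm d/dt Phi k f} to each of $n_{\neq}$, $\pa_y c_{\neq}$, $\pa_x c_{\neq}$, and $c_{\neq}$: each satisfies an equation of the form $\pa_t g + u(y)\pa_x g = \frac{1}{A}\de g + (\mathrm{forcing})$, so the corresponding $\Phi_k[g]$ inherits the rich negative budget $\mathcal{N}_k[g]$ from \eqref{ddtPhik_f} plus the forcing contribution. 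The task is then to bound the forcing contributions and show that each is absorbed into a small fraction of these negative terms, leaving the net bound $\frac{d}{dt}\mathcal{F}\leq -\tfrac{\eta}{A^{1/3}}\mathcal{F}$ for $A$ large.

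The forcing sources are as follows: $\wh n_k$ is forced by $NL_k+L_k$ as in \eqref{NL}--\eqref{L}; $\wh c_k$ by the zeroth-order source $\frac{1}{A}(\wh n_k-\wh c_k)$; differentiating the $c$-equation in $y$ produces that source applied to $\pa_y$ together with the shear commutator $-u'(y)ik\,\wh c_k$, while differentiating in $x$ produces only the differentiated source since $[\pa_x,u(y)\pa_x]=0$. The prefactor $A|k|$ on $\Phi_k[c_{\neq}]$ is engineered precisely so that the dominant cross contribution from $\frac{1}{A}\wh n_k$, namely $\frac{1}{A}\lan \wh c_k,\wh n_k\ran$ weighted by $A|k|$, becomes $|k|\,\|\wh c_k\|_2\|\wh n_k\|_2$; Young splits this between $\frac{|k|^{2/3}}{A^{1/3}}\|\wh n_k\|_2^2$ from $\mathcal{N}_k[n_{\neq}]$ and $A^{2/3}|k|^{5/3}\|\wh c_k\|_2^2$ from $A|k|\,\mathcal{N}_k[c_{\neq}]$, both of which are standard summands in \eqref{ddtPhik_f}. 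The commutator $u'(y)ik\,\wh c_k$ entering $\pa_y c_{\neq}$ generates $|k|\|u'\|_\infty\|\wh c_k\|_2\|\pa_y\wh c_k\|_2$, which I would absorb against the $\frac{1}{A}\|\pa_y\wh c_k\|_2^2$ piece of $\mathcal{N}_k[\pa_y c_{\neq}]$ and the piece $\frac{|k|^4}{A}\|\sqrt{\gamma}u'\wh c_k\|_2^2$ of $A|k|\,\mathcal{N}_k[c_{\neq}]$ (which, with $\gamma=\epsilon_\gamma|k|^{-2}$, becomes $\epsilon_\gamma|k|^3\|u'\wh c_k\|_2^2$). The terms $NL_k$ and $L_k$ are estimated using the bootstrap hypotheses: \eqref{H5} keeps $\|\na c_{\neq}\|_\infty$ bounded independently of $A$, so after integration by parts $NL_k$ contributes at most $\lesssim_{\CC}\frac{1}{A}\|\na_{x,y}n_{\neq}\|_2^2$, and $L_k$ is similar using the $L^\infty_y$ bounds on $\pa_y c_0$ and $n_0$ from \eqref{H3}. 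When integrated in $t$, each such contribution is controlled by \eqref{H1}, while pointwise in $t$ it is dominated by a small fraction of $\frac{1}{A^{1/3}}\mathcal{F}$ once $A$ is large compared with the constants in the bootstrap.

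The main obstacle will be the systematic verification that every cross-pairing closes for all $k\neq 0$, uniformly across both the enhanced-dissipation regime (low $|k|$, rate $|k|^{2/3}/A^{1/3}$) and the classical-dissipation regime (large $|k|$, rate $|k|^2/A$), in a manner compatible with the non-standard $A|k|$ weight on $\Phi_k[c_{\neq}]$. A secondary subtlety is that taking $\pa_y$ of the chemical equation introduces $u'$, and taking $\pa_y$ twice through the evolution of $\Phi_k[\pa_y c_{\neq}]$ introduces $u''$, so the hypothesis $\|u'\|_{W^{2,\infty}}<\infty$ must enter at exactly the right places to prevent any $A$-dependent constants from surviving. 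Once every forcing term has been pushed to at most half of a matching $\mathcal{N}_k[\cdot]$ piece, summing in $k\neq 0$ yields \eqref{TimeDerivativeOfF} with $\eta$ depending only on $u$.
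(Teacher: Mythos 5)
Your overall strategy --- differentiate $\mathcal{F}$, apply Proposition~\ref{thm d/dt Phi k f} to each of $n_{\neq}$, $\pa_y c_{\neq}$, $\pa_x c_{\neq}$, $c_{\neq}$ to get the negative budget $\mathcal{N}_k$, and absorb the forcing --- is indeed the paper's approach, and your account of the $A|k|$-weighted pairing of $\tfrac{1}{A}\wh n_k$ against $\wh c_k$ (geometric mean $A^{1/6}|k|^{7/6}\gtrsim |k|$) is correct. However, the Young split you propose for the crucial shear-flow destabilizing commutator $-u'(y)ik\wh c_k$ does not close. You pair an $A^{-1}\|\pa_y\wh c_k\|_2^2$-type budget from $\mathcal{N}_k[\pa_y c_{\neq}]$ against the $\gamma$ budget of $A|k|\,\mathcal{N}_k[c_{\neq}]$, which with $\gamma=\ep_\gamma|k|^{-2}$ reduces to $\ep_\gamma|k|^3\|u'\wh c_k\|_2^2$. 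The geometric mean of those coefficients is at most $\sqrt{|k|^3/A}$ (or $|k|^{11/6}/A^{1/6}$ if you upgrade the first piece to the enhanced-dissipation term $\frac{|k|^{2/3}}{A^{1/3}}\|\pa_y\wh c_k\|_2^2$), which is strictly below the required $|k|$ whenever $1\le |k|\lesssim A^{1/5}$ --- exactly the low-frequency enhanced-dissipation regime that the whole argument is built around. The root cause is that the $\gamma$ budget is $A$-independent after $A|k|$-weighting, so it cannot absorb a forcing term that carries no $1/A$ factor.

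The paper (estimate of $T^1_{\pa_y c,2;k}$ in Section~3.3) instead pairs the enhanced-dissipation budget $\frac{|k|^{2/3}}{A^{1/3}}\|\pa_y\wh c_k\|_2^2$ of $\mathcal{N}_k[\pa_y c_{\neq}]$ with the $\beta$ budget $-\tfrac12|k|^2\|\sqrt{\beta}u'\wh f_k\|_2^2$ of $A|k|\,\mathcal{N}_k[c_{\neq}]$, which has no $1/A$ prefactor and, upon inserting $\beta=\ep_\beta A^{-1/3}|k|^{-4/3}$, reads $\tfrac{\ep_\beta}{2}A^{2/3}|k|^{5/3}\|u'\wh c_k\|_2^2$. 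Now the geometric mean is $A^{1/6}|k|^{7/6}$, which dominates $|k|$ for all $|k|\ge 1$ once $A$ is large relative to $\|u'\|_\infty/\inf|u'|$. You should also note that the commutator reappears in the $\alpha$-, $\beta$-, and $\gamma$-paired pieces of $\Phi_k[\pa_y c_{\neq}]$ (the paper's $T^{\al,\beta,\gamma}_{\pa_y c,2;k}$), and that the relaxation $-\tfrac{1}{A}c_{\neq}$ produces extra $\beta$-type contributions such as $T^\beta_{\pa_y c,3;k}$; these are lower order but need checking, and the ones producing $u''$ and $u'''$ are precisely where $u'\in W^{2,\infty}$ is used.
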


\begin{rmk} Thanks to the assumption on the initial chemical gradient
\bel\label{IntialConditionForC}
||\na(c_{in})_{\neq}||_{H^1}\lesssim A^{-q},\quad q>1/2,
\eel
the initial value $F(0)$ is bounded
\begin{equation}
F(0)\leq C(\ep_\al,\ep_\beta,\ep_\gamma, u)\bigg(||(n_{in})_{\neq}||_{H^1}^2+||(\na c_{in})_{\neq}||_{H^1}^2(1+A)\bigg)\leq C_{ED}(||(n_{in})_{\neq}||_{H^1}^2+1),
\end{equation}
Combining this with the fact that $\mathcal{F}(t)\geq ||n_{\neq}(t)||_2^2+||\na c_{\neq}(t)||_2^2$, we conclude that the estimate \eqref{TimeDerivativeOfF} implies \eqref{ctrl:ED}.
\end{rmk}

In order to show the idea behind the construction of the functional $\mathcal{F}$, we first list all the related equations here:
\begin{align}
\pa_t \wh{n}_{k}=&\frac{1}{A}\pa_{yy}\wh{n}_k-\frac{|k|^2}{A}\wh{n}_k-u(y)ik\wh{n}_k-L_k-NL_k;\label{nkeq}\\
\pa_t \pa_y \wh{c}_k=&\frac{1}{A}\pa_{yy}\pa_y\wh{c}_k-\frac{|k|^2}{A}\pa_y\wh{c}_k-u(y)ik\pa_y\wh{c}_k-u'(y)ik\wh{c}_k+\frac{1}{A}\pa_y \wh{n}_k-\frac{1}{A}\pay\wh{c}_k;\label{payckeq}\\
\pa_t \wh{c}_k=&\frac{1}{A}\pa_{yy}\wh{c}_k-\frac{|k|^2}{A}\wh{c}_k-u(y)ik\wh{c}_k+\frac{1}{A}\wh{n}_k-\frac{1}{A}\wh{c}_k,\label{ckeq}
\end{align}
where $L_k,NL_k$ are defined as in \eqref{L} and \eqref{NL}. 
Our primary goal is to obtain the $L^2$ enhanced dissipation estimate of $n_{\neq}$. However,   we are not able to close the estimate on ${d}\Phi_k[n_{\neq}]/{dt}$ without further information about the chemical gradient $\pay c_{\neq}$. Specifically speaking, the terms in $L_k,\enskip NL_k$ involving $\pa_y(\pa_y\wh{c}_{\neq} \wh{n}_{0,\neq})$ cannot be absorbed by the negative terms in $d\Phi_k[n_{\neq}]/dt$. Therefore, in the first step, we add $\Phi_k[\na c_{\neq}]$ in the functional $\mathcal{F}$ to make use of the extra negative terms in ${d}\Phi[\na c_{\neq}]/dt$. The drawback is that it introduces shear flow destabilizing effect into the functional since problematic terms involving $-u'(y)ik\wh{c}_k$ are created. These terms will typically involve large powers of $A$ and $|k|$. In the second step, we add the term $A|k|\Phi_k[c_{\neq}]$ in $\mathcal{F}$ to compensate for this shear flow destabilizing effect
. Finally, we show that the negative terms in $d\Phi_k[n_{\neq}]/{dt}$ absorb all terms involving $n_{\neq}$ in $A|k|\Phi_k[c_{\neq}]$. By completing this loop, we have shown that all the terms are absorbed by the negative terms in the time derivative of $\mathcal{F}$ and the exponential decay \eqref{TimeDerivativeOfF} follows.

\begin{pro}\label{thm d/dt Phi n payc c k}
For $\tilde{\epsilon}$ sufficiently small depending only on $u$, there holds,
\begin{align}
\frac{d}{dt}\Phi_k[n_{\neq}(t)]\leq 
&\mathcal{N}_k[n_{\neq}]+\bigg\{2Re\lan -L_k ,\widehat{n}_k\ran
-2Re\lan \al \pa_{yy}\widehat{n}_k,-L_k\ran-2kRe[\lan i\beta u'L_k,\pa_y\widehat{n}_k\ran+\lan i\beta u'\widehat{n}_k,\pa_y L_k\ran]\nonumber\\
&+2|k|^2Re\lan \gamma (u')^2\widehat{n}_k,-L_k\ran\bigg\}\nonumber\\
&+\bigg\{-2Re\lan NL_k,\widehat{n}_k\ran
+2Re\lan  \al \pa_{yy}\widehat{n}_k,NL_k\ran-2kRe[\lan i\beta u'NL_k,\pa_y\widehat{n}_k\ran+\lan i\beta u'\widehat{n}_k,\pa_y NL_k\ran]\nonumber\\
&-2|k|^2Re\lan \gamma (u')^2\widehat{n}_k,NL_k\ran\bigg\}\nonumber\\
=:&\mathcal{N}_{n,k}+\{L_k^1+L_k^{\al}+L_k^{\beta}+L_k^{\gamma}\}+\{NL_k^1+NL_k^{\al}+NL_k^{\beta}+NL_k^{\gamma}\}. \label{ddtPhikN}
\end{align}
Recall that $\mathcal{N}_k$ is defined in \eqref{ddtPhik_f} and $L_k,NL_k$ are defined in (\ref{NL},\ref{L}). The time derivative of $\Phi_k[\pa_y c_{\neq}],\Phi_k[\pa_x c_{\neq}]$ are bounded,
\begin{align}
\frac{d}{dt}\Phi_k[\pa_y c_{\neq}(t)] \leq
&\mathcal{N}_k[\pa_y c_{\neq}]+\bigg\{2Re\left\lan \frac{\pa_y \wh{n}_k}{A}  ,\pa_y\widehat{c}_k\right\ran
-2Re\left\lan \al \pa_{yy}\pa_y\widehat{c}_k,\frac{\pa_y \wh{n}_k}{A} \right\ran+2kRe\bigg[\left\lan i\beta u'\frac{\pa_y \wh{n}_k}{A} ,\pa_y\pa_y\widehat{c}_k\right\ran\nonumber\\
&+\left\lan i\beta u'\pa_y\widehat{c}_k,\frac{\pa_y \pa_y \wh{n}_k}{A} \right\ran\bigg]+2|k|^2Re\left\lan \gamma (u')^2\pa_y\widehat{c}_k,\frac{\pa_y \wh{n}_k}{A} \right\ran\bigg\}\nonumber\\
&+\bigg\{-2Re\left\lan u'ik\wh{c}_k,\pa_y\widehat{c}_k\right\ran
+2Re\left\lan  \al \pa_{yy}\pa_y\widehat{c}_k, u'ik\wh{c}_k\right\ran-2kRe\bigg[\left\lan i\beta (u')^2ik\wh{c}_k,\pa_y\pa_y\widehat{c}_k\right\ran\nonumber\\
&+\lan i\beta u'\pa_y\widehat{c}_k,\pa_y ( u'ik\wh{c}_k)\ran\bigg]\nonumber-2|k|^2Re\lan \gamma (u')^2\pa_y\widehat{c}_k, u'ik\wh{c}_k\ran\bigg\}-4k Re\left\lan i\beta u' \frac{\pay \wh{c}_k}{A},\pa_{yy}\wh{c}_k\right\ran\nonumber\\
=:&\mathcal{N}_{\pa_y c,k}+\{T_{\pa_y c,1;k}^1+T_{\pa_y c, 1;k}^{\al}+T_{\pa_y c,1;k}^{\beta}+T_{\pa_y c,1 ;k}^{\gamma}\}+\{T_{\pa_y c,2;k}^1+T_{\pa_y c, 2;k}^{\al}+T_{\pa_y c,2;k}^{\beta}+T_{\pa_y c, 2;k}^{\gamma}\}+T_{\pay c, 3;k}^\beta, \label{ddtPhikPayC}
\end{align}

\begin{align}
\frac{d}{dt}\Phi_k[\pa_x c_{\neq}(t)] \leq
&\mathcal{N}_k[\pa_x c_{\neq}]+\bigg\{2Re\left\lan \frac{ik \wh{n}_k}{A}  ,ik\widehat{c}_k\right\ran
-2Re\left\lan \al \pa_{yy}ik\widehat{c}_k,\frac{ik \wh{n}_k}{A}\right \ran+2kRe\bigg[\left\lan i\beta u'\frac{ik \wh{n}_k}{A} ,\pa_y ik\widehat{c}_k\right\ran\nonumber\\
&+\left\lan i\beta u'ik\widehat{c}_k,\frac{ik \pa_y \wh{n}_k}{A} \right\ran\bigg]+2|k|^2Re\left\lan \gamma (u')^2ik\widehat{c}_k,\frac{ik\wh{n}_k}{A}\right\ran\bigg\}-4kRe\left\lan i\beta u'\frac{ik\wh{c}_k}{A},ik\pay \wh{c}_k\right\ran\nonumber\\
=:&\mathcal{N}_{\pa_x c,k}+\{T_{\pa_x c,1;k}^1+T_{\pa_x c, 1;k}^{\al}+T_{\pa_x c,1;k}^{\beta}+T_{\pa_x c,1 ;k}^{\gamma}\}+T_{\pa_x c,2;k}^\beta. \label{ddtPhikPaxC}
\end{align}
The time derivative of $A|k|\Phi_k[c_{\neq}]$ is bounded,
\begin{align}
\frac{d}{dt} A|k|\Phi_k[c_{\neq}(t)] \leq& A|k|\mathcal{N}_k[c_{\neq}]
+ A|k|\bigg\{2Re\left\lan \frac{\wh{n}_k}{A} ,\widehat{c}_k\right\ran
-2Re\left\lan \al \pa_{yy}\widehat{c}_k,\frac{\wh{n}_k}{A}\right\ran+2kRe\bigg[\left\lan i\beta u'\frac{\wh{n}_k}{A},\pa_y\widehat{c}_k\right\ran\nonumber\\
&+\left\lan i\beta u'\widehat{c}_k,\frac{\pa_y \wh{n}_k}{A}\right\ran\bigg]+2|k|^2Re\left\lan \gamma (u')^2\widehat{c}_k,\frac{\wh{n}_k}{A}\right\ran\bigg\}-A|k|4kRe\left\lan i\beta u'\frac{\wh{c}_k}{A},\pay \wh{c}_k\right\ran\nonumber\\
=:& A|k|\mathcal{N}_{c,k}+ A|k|\{T_{c,1;k}^1+T_{c,1;k}^{\al}+T_{c,1;k}^{\beta}+T_{c,1;k}^{\gamma}\}+A|k|T_{c,2;k}^\beta. \label{ddtPhikC}
\end{align}
\end{pro}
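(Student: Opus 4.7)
The plan is to treat each of $\Phi_k[n_{\neq}]$, $\Phi_k[\pa_y c_{\neq}]$, $\Phi_k[\pa_x c_{\neq}]$, and $A|k|\Phi_k[c_{\neq}]$ as the functional $\Phi_k$ evaluated on a field which solves the passive-scalar equation \eqref{PS} plus an explicit source and a linear damping, then differentiate the four summands in \eqref{Phi k} in time and invoke Proposition \ref{thm d/dt Phi k f} to handle the passive-scalar portion. All remaining contributions then arrive in blocks, one block per source term, with four entries per block corresponding to the four summands of $\Phi_k$.

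First I would prove \eqref{ddtPhikN}. Rewrite \eqref{nkeq} as $\pa_t \wh{n}_k + u(y)ik\wh{n}_k - \tfrac{1}{A}(\pa_{yy}-|k|^2)\wh{n}_k = -L_k - NL_k$, so Proposition \ref{thm d/dt Phi k f} contributes $\mathcal{N}_k[n_{\neq}]$ for the passive-scalar part. The source $-L_k - NL_k$ contributes, term-by-term from \eqref{Phi k}: (i) $2Re\lan -L_k-NL_k, \wh{n}_k\ran$ from $\|\wh{n}_k\|_2^2$; (ii) $-2Re\lan \al\pa_{yy}\wh{n}_k, -L_k-NL_k\ran$ from $\|\sqrt{\al}\pa_y\wh{n}_k\|_2^2$, after one integration by parts in $y$ (no commutator since $\al$ is a Fourier multiplier in $k$ independent of $y$); (iii) from the $\beta$-cross-term, the product rule applied to both factors produces $-2kRe[\lan i\beta u'(L_k+NL_k), \pa_y\wh{n}_k\ran + \lan i\beta u'\wh{n}_k, \pa_y(L_k+NL_k)\ran]$; (iv) $2|k|^2 Re\lan \gamma(u')^2\wh{n}_k, -L_k-NL_k\ran$ from $|k|^2\|\sqrt{\gamma}u'\wh{n}_k\|_2^2$. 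Splitting each contribution into its $L_k$ and $NL_k$ pieces yields exactly the two bracketed expressions in \eqref{ddtPhikN}.

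Next, for $\Phi_k[\pa_y c_{\neq}]$, equation \eqref{payckeq} is the passive scalar equation for $\pa_y\wh{c}_k$ forced by $\tfrac{1}{A}\pa_y\wh{n}_k - u'(y)ik\wh{c}_k - \tfrac{1}{A}\pa_y\wh{c}_k$. The same four-way expansion, with $\pa_y\wh{c}_k$ in the role of $\wh{n}_k$, produces the first bracket of \eqref{ddtPhikPayC} from the source $\tfrac{1}{A}\pa_y\wh{n}_k$ and the second bracket from the shear destabilizing term $-u'ik\wh{c}_k$. The damping $-\tfrac{1}{A}\pa_y\wh{c}_k$ contributes non-positive quantities in the three non-cross-term parts of $\Phi_k$ (each reducing to $-\tfrac{2}{A}$ times a non-negative square), which are simply dropped; but in the $\beta$-cross-term the two halves of the product rule add rather than cancel, leaving the residue $-\tfrac{4k}{A}Re\lan i\beta u'\pa_y\wh{c}_k, \pa_{yy}\wh{c}_k\ran = T_{\pa_y c, 3;k}^\beta$, which is not manifestly signed and must be kept. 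The derivations of \eqref{ddtPhikPaxC} and \eqref{ddtPhikC} follow the same template, starting from \eqref{ckeq} multiplied by $ik$ (resp.\ by $A|k|$): the source $\tfrac{ik}{A}\wh{n}_k$ (resp.\ $\tfrac{1}{A}\wh{n}_k$) generates the single source bracket, and the damping's asymmetric $\beta$-cross-term contribution is the residue $T_{\pa_x c, 2;k}^\beta$ (resp.\ $T_{c, 2;k}^\beta$); since $\pa_x$ commutes with $u(y)\pa_x$, no $u'$-destabilizing bracket appears in these two cases.

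The main obstacle is organizational rather than analytical: one must correctly apply the product rule to the $\beta$-cross-term (both factors are differentiated, and for the damping they add in the same direction), perform the single integration by parts in the $\al$-term, and track the $A$- and $k$-powers through each resulting inner product. No delicate analytical cancellation beyond the structural one noted above is needed; the proposition is essentially the bookkeeping output of the chain rule applied to $\Phi_k$ along each of the evolution equations \eqref{nkeq}, \eqref{payckeq}, \eqref{ckeq}.
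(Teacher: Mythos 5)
Your proposal is correct and is essentially the same argument the paper intends: the paper's one-line proof ("Applying the equations \eqref{ddtPhik_f}, \eqref{nkeq}, \eqref{payckeq}, \eqref{ckeq} and integration by parts, the estimates follow") is precisely your decomposition of each evolution into the passive-scalar part (handled by Proposition \ref{thm d/dt Phi k f}) plus source and damping, differentiated through the four summands of $\Phi_k$ with the single integration by parts in the $\al$-term. Your observation that the $-\tfrac{1}{A}\,\cdot$ damping contributes non-positive quantities in the three diagonal summands but leaves the asymmetric residue $T^{\beta}_{\cdot,\,\cdot;k}$ from the $\beta$-cross-term, and that the $\pa_x$ derivative generates no $u'$-destabilizing bracket, is exactly the bookkeeping that makes the stated identities come out as written.
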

\begin{proof}Applying the equations \eqref{ddtPhik_f}, \eqref{nkeq}, \eqref{payckeq}, \eqref{ckeq} and integration by parts, the estimates follow.
\end{proof}

The remaining part of this section is organized as follows: in section 3.2, we estimate all the terms in \eqref{ddtPhikC}; in section 3.3, we estimate \eqref{ddtPhikPayC} and \eqref{ddtPhikPaxC}; in section 3.4, we estimate \eqref{ddtPhikN}.

\subsection{Time evolution estimates: $A|k|\frac{d}{dt}\Phi_k[c_{\neq}]$}
In this subsection, we estimate terms in \eqref{ddtPhikC}. First the $A |k| T_{c,1;k}^1$ term in \eqref{ddtPhikC} can be estimated using H\"older inequality and Young's inequality:
\begin{align}\label{Tc1k1}
T_{c,1;k}^1=2Re\bigg\lan \frac{\wh{n}_k}{A},\wh{c}_k\bigg\ran\leq \frac{B/\tilde{\ep}}{A^{5/3}|k|^{2/3}}||\wh{n}_k||_2^2+\frac{|k|^{2/3}}{A^{1/3}B}\tilde{\ep}||\wh{c}_k||_2^2.
\end{align}
We show that $ A|k| T_{c,1;k}^1$ is consistent with \eqref{TimeDerivativeOfF} given that $B$, then $A$, are chosen large. For the second term in \eqref{Tc1k1}, it can be absorbed by the negative term $A|k|\mathcal{N}_k[c_{\neq}]$ in \eqref{ddtPhikC} given $B$ chosen large enough. For the first term, we can use the negative term $- \frac{\tilde{\ep}}{2}\frac{|k|^{2/3}}{A^{1/3}}||\wh{n}_k||_2^2$  in \eqref{ddtPhikN} to absorb it given $A$ chosen large enough compared to $B$ and $1/\tilde{\ep}$, i.e.
\begin{align*}
A|k|\frac{B/\tilde{\ep}}{A^{5/3}|k|^{2/3}}||\wh{n}_k||_2^2-\frac{1}{2}\tilde{\ep}\frac{|k|^{2/3}}{A^{1/3}}||\wh{n}_k||_2^2\leq-\frac{7}{16}\tilde{\ep}\frac{|k|^{2/3}}{A^{1/3}}||\wh{n}_k||_2^2.
\end{align*} 
The second term $ A |k| T_{c,1;k}^\al$ in \eqref{ddtPhikC} is estimated using H\"older inequality, Young's inequality and the definition of $\al$ \eqref{def:abg}:
\begin{align*}
T_{c,1;k}^\al
\leq \frac{1}{AB}||\sqrt{\al}\pa_{yy}\wh{c}_k||_2^2+\frac{B}{A^{5/3}|k|^{2/3}}||\wh{n}_k||_2^2,
\end{align*}
which by \eqref{ddtPhik_f}, \eqref{ddtPhikN} and \eqref{ddtPhikC} is consistent with \eqref{TimeDerivativeOfF} given $A$ large. 
For the $A|k|T_{c,1;k}^\beta$ term in \eqref{ddtPhikC}, we estimate it using the fact that $||u''||_\infty\leq C$, the definition of $\beta$ \eqref{def:abg}, H\"older inequality and Young's inequality as follows
\begin{align*}
T_{c,1;k}^\beta=&4kRe\bigg\lan i\beta u'\frac{\wh{n}_k}{A},\pa_y\wh{c}_k\bigg\ran-2kRe\bigg\lan i\beta u''\wh{c}_k,\frac{\wh n_k}{A}\bigg\ran\\
\lesssim&\frac{B|k|^{2/3}}{A^{4/3}}||\sqrt{\beta}u'\wh{n}_k||_2^2+\frac{||\pa_y \wh{c}_k||_2^2}{AB}+\frac{||\wh{c}_k||_2^2}{BA^{1/3}}+\frac{B||\wh{n}_k||_2^2}{A^{7/3}|k|^{2/3}},
\end{align*}
which by \eqref{ddtPhik_f}, \eqref{ddtPhikN} and \eqref{ddtPhikC} is consistent with \eqref{TimeDerivativeOfF} given $B$, then $A$ large. 
Similarly, the $A|k|T_{c,1;k}^\gamma$ term in \eqref{ddtPhikC} can be estimated using H\"older inequality and Young's inequality
\begin{align*}
T_{c,1;k}^\gamma=2|k|^2Re\bigg\lan \gamma (u')^2 \wh{c}_k,\frac{\wh{n}_k}{A}\bigg\ran\leq \frac{|k|^{8/3}}{A^{1/3}B}||\sqrt{\gamma}u'\wh{c}_k||_2^2+\frac{B|k|^{4/3}}{A^{5/3}}||\sqrt{\gamma}u'\wh{n}_k||_2^2,
\end{align*}
which is consistent with \eqref{TimeDerivativeOfF} given that $B$, then $A$, are chosen large enough thanks to \eqref{ddtPhik_f}, \eqref{ddtPhikN} and \eqref{ddtPhikC}. 
The $A|k|T_{c,2;k}^\beta$ term in \eqref{ddtPhikC} can be estimated using H\"older inequality,  Young's inequality and the definition of $\beta$ \eqref{def:abg} as follows
\begin{align}\label{Tc2kbeta}
A|k|T_{c,2;k}^\beta\leq A|k|\left(\frac{8|k|^2||\sqrt{\beta}u'\wh{c}_k||_2^2}{A}+\frac{||\pa_y \wh{c}_k||_2^2}{2A^{4/3}}\right),
\end{align}
which can be absorbed by $A|k|\mathcal{N}_k[c_{\neq}]$ in \eqref{ddtPhikC} given that $A$ is chosen large enough. 
This completes the estimation of all the terms in \eqref{ddtPhikC}.

\subsection{Time evolution estimates: $\frac{d}{dt}\Phi[\na c_{\neq}]$}
In this subsection, we estimate the time evolution of $\Phi[\na c_{\neq}]$  \eqref{ddtPhikPayC} and \eqref{ddtPhikPaxC}. We start by estimating the terms in $\frac{d}{dt}\Phi[\pa_y c_{\neq}]$ since they involve strong shear flow destabilizing effect. 
First we estimate the term $T_{\pa_y c, 2;k}^1$ in \eqref{ddtPhikPayC} using the definition of $\beta$ \eqref{def:abg}, H\"older inequality and Young's inequality as follows:
\begin{align*}T_{\pa_y c, 2;k}^1
\lesssim \frac{|k|^{2/3}||\pa_y \wh{c}_k||_2^2}{BA^{1/3}}+B \left(A^{2/3}|k|^{2/3}\right)|k|^2||\sqrt{\beta}u' \wh{c}_k||_2^2.
\end{align*}
Now we see that the first term is absorbed by the negative terms in \eqref{ddtPhikPayC} given $B$ chosen large enough, and the second term can be absorbed by the term $-A|k|^3||\sqrt{\beta}u'\wh{c}_k||_2^2$ in \eqref{ddtPhikC} given $A$ chosen large enough. Now we see that this term is consistent with \eqref{TimeDerivativeOfF}.
Next, combining the definition of $\al,\beta$ \eqref{def:abg}, H\"older inequality and Young's inequality, the $\al$ term in $T_{\pay c,2;k}^\al$ can be estimated as follows:
\begin{align*}T_{\pay c,2;k}^\al
\lesssim \frac{1}{AB}||\sqrt{\al}\pa_{yyy}\wh{c}_k||_2^2+B\left(A^{2/3}|k|^{2/3}\right)|k|^2||\sqrt{\beta}u'\wh{c}_k||_2^2,
\end{align*}
which is consistent with \eqref{TimeDerivativeOfF} given that $B$, then $A$, are chosen large.
For the first $\beta$ term in $T_{\na c,2 ;k}^\beta$, combining the definition of $\beta$ \eqref{def:abg}, the fact that $||u'||_{W^{1,\infty}}\leq C$, integration by parts, H\"older inequality and Young's inequality yields
\begin{align*}
2kRe\lan i\beta u'(-u'ik\wh{c}_k),\pa_{yy}\wh{c}_k\ran=&-2|k|^2 Re\lan \beta 2u'u'' \wh{c}_k,\pa_y\wh{c}_k\ran-2|k|^2Re\lan \beta u'^2\pa_y \wh{c}_k,\pa_y \wh{c}_k\ran\\
\leq&2|k|^2||\sqrt{\beta}u''\wh{c}_k||_2^2+2|k|^2||\sqrt{\beta}u'\pa_y \wh{c}_k||_2^2-2|k|^2||\sqrt{\beta}u'\pa_y\wh{c}_k||_2^2\\
\lesssim&\frac{|k|^{2/3}}{A^{1/3}}||\wh{c}_k||_2^2,
\end{align*}
which can be absorbed by the negative term $A|k|\mathcal{N}_k[c_{\neq}]$ in \eqref{ddtPhikC} given $A$ large enough. By applying integration by parts, we see that the second $\beta$ term in $T_{\pay c,2;k}^\beta$ is equivalent to the first one up to the following term, which can be estimated using the definition of $\beta$ \eqref{def:abg}, $||u''||_\infty\leq C$, H\"older inequality and Young's inequality
\begin{align*}
2kRe\lan i\beta u''\pa_y \wh{c}_k,u'ik\wh{c}_k\ran\lesssim\frac{|k|^{2/3}||\pa_y\wh{c}_k||_2^2}{A^{1/3}B}+B|k|^2||\sqrt{\beta}u'\wh{c}_k||_2^2.
\end{align*}
Since the first terms can be absorbed by $\mathcal{N}_k[\pay c_{\neq}]$ and the second term  can be absorbed by $A|k|\mathcal{N}_k[c_{\neq}]$, this is consistent with \eqref{TimeDerivativeOfF} given that $B$, then $A$, are chosen large.
The $T_{\pay c,2;k}^\gamma$ term in \eqref{ddtPhikPayC} can be estimated using $||u'||_\infty\leq C$, H\"older inequality and Young's inequality as follows:
\begin{align*}
T_{\pay c,2;k}^\gamma
\lesssim \frac{|k|^{8/3}}{A^{1/3}B}||\sqrt{\gamma}u'\pa_y\wh{c}_k||_2^2+B\left(A^{2/3}|k|^{2/3}\right)\frac{||\sqrt{\gamma}u'\wh{c}_k||_2^2|k|^{8/3}}{A^{1/3}}.
\end{align*}
Now we see that the first term is absorbed by the negative term $\mathcal{N}_k[\pay c_{\neq}]$ in \eqref{ddtPhikPayC} if $B$ is chosen large, and the second term is absorbed by $A|k|\mathcal{N}_k[c_{\neq}]
$ in \eqref{ddtPhikC} given that $A$ is chosen large. This finishes the estimation of the terms $T_{\pay c,2;k}^{(\cdot)}$ in \eqref{ddtPhikPayC}.

For the terms of the form $T_{\pay c,1;k}^{(\cdot)}$ in \eqref{ddtPhikPayC}, we will use the negative terms in \eqref{ddtPhikN} and \eqref{ddtPhikPayC} to absorb them. For the $T_{\pay c,1;k}^1$ in \eqref{ddtPhikPayC}, we have that by H\"older inequality and Young's inequality,
\begin{align*}
T_{\pay c,1;k}^1
\leq \frac{1}{A^{5/4}}||\pa_y\wh{n}_k||_2^2+\frac{1}{A^{3/4}}||\pa_y\wh{c}_k||_2^2.
\end{align*}
By choosing $A$ large, these two terms can be absorbed by the negative terms in \eqref{ddtPhikN} and \eqref{ddtPhikPayC}. Combining the definition of $\al$ \eqref{def:abg}, H\"older inequality and Young's inequality, the $T_{\pay c, 1;k}^\al$ term in \eqref{ddtPhikPayC} can be estimated as follows,
\begin{align*}T_{\pay c, 1;k}^\al
\leq \frac{1}{A^{4/3}|k|^{1/3}}||\sqrt{\al}\pa_{yyy}\wh{c}_k||_2^2+\frac{1}{A^{4/3}|k|^{1/3}}||\pay \wh{n}_k||_2^2,
\end{align*}
which is consistent with \eqref{TimeDerivativeOfF} for $A$ large enough. For the first $\beta$ term in $T_{\pay c ,1;k}^\beta$, we can estimate it using the definition of $\beta$ \eqref{def:abg}, the fact that $||u'||_\infty\leq C$, H\"older inequality and Young's inequality as follows
\begin{align*}
2k Re\bigg\lan i\beta u'\frac{\pa_y\wh{n}_k}{A},\pa_{yy}\wh{c}_k\bigg\ran\lesssim \frac{1}{A^{4/3}}||\pa_y\wh{n}_k||_2^2+\frac{1}{A^{4/3}}||\pa_{yy}\wh{c}_k||_2^2.
\end{align*}
This term is consistent with \eqref{TimeDerivativeOfF} given $A$ chosen large. The second term in $T_{\pay c, 1;k}^\beta$ is the same as the first one through integration by part up to a controllable term, which can be estimated using the definition of $\beta$ \eqref{def:abg}, the fact that $||u''||_\infty\leq C$, H\"older inequality and Young's inequality as follows
\begin{align*}
-2kRe\bigg\lan i\beta u'' \pa_y\wh{c}_k,\frac{\pa_y\wh{n}_k}{A}\bigg\ran\lesssim \frac{1}{A^{4/3}}||\pa_y\wh{c}_k||_2^2+\frac{1}{A^{4/3}}||\pa_y\wh{n}_k||_2^2.
\end{align*}
As long as $A$ is large enough, these two terms can be absorbed by the negative terms in \eqref{ddtPhikN} and \eqref{ddtPhikPayC}.
Finally, for the $\gamma$ term $T_{\pay c,1;k}^\gamma$, we estimate it using 
the definition of $\gamma$ \eqref{def:abg}, $||u'||_{W^{1,\infty}}\leq C$, H\"older inequality and Young's inequality as follows
\begin{align*}T_{\pay c,1;k}^\gamma\lesssim \frac{||\pay\wh c_k||_2^2}{A^{2/3}}+\frac{||\pay\wh n_k||_2^2}{A^{4/3}}
.
\end{align*}
This is consistent with \eqref{TimeDerivativeOfF} given that $A$ is chosen large enough. The treatment of the term $T_{\pay c,3;k}^\beta$ in \eqref{ddtPhikPayC} is similar to the treatment of \eqref{Tc2kbeta}, so we omit the estimate for the sake of brevity. This  concludes the estimate of the time evolution $\frac{d}{dt}\Phi_k[\pa_yc_{\neq}]$.

The estimate of the time derivative $\frac{d}{dt}\Phi_k[\pa_xc_{\neq}]$ is similar to the estimates of the terms  $T_{\pay c,1;k}^{(\cdot)}$ and $T_{\pay c, 3;k }^\beta$ in \eqref{ddtPhikPayC}
, hence we omit it for the sake of brevity.

\subsection{Time evolution estimates: $\frac{d}{dt}\Phi[n_{\neq}]$}
\subsubsection{Estimate on the $L$ terms in \eqref{ddtPhikN}}
These terms are linear in the $k$-th mode, and it accordingly makes sense to estimate these terms $k$-by-$k$. In this subsection we prove that for $A$ sufficiently large, the $L_k^{(\cdot)}$ terms can be absorbed by the negative terms in the $\frac{d}{dt}\mathcal{F}$,  i.e.,
\bel\label{Lestimate}
L_k^1+L_k^\al+L_k^\beta+L_k^\gamma \leq -\frac{1}{4}\mathcal{N}_k[n_{\neq}]-\frac{1}{4}\mathcal{N}_k[\na c_{\neq}].
\eel
We start by estimating the $L_k^1$ term in \eqref{ddtPhikN}. We decompose it into two parts:
\begin{align}
L_k^1=2Re\big\lan -L_k ,{\widehat{n}_k}\big\ran=\frac{2}{A}Re\big\lan -\na\cdot(\na \wh{c}_kn_0)-\pay(\pay {c}_0 \wh{n}_k),{\widehat{n}_k}\big\ran=:L_{k,1}^1+L_{k,2}^1.
\end{align}
The term $L_{k,1}^1$ can be estimated as follows:
\begin{align*}
L_{k,1}^1\leq \frac{1}{AB}||\na \wh{n}_k||_2^2+\frac{B}{A}||\na\wh{c}_k||_2^2||n_0||_\infty^2.
\end{align*}
Thanks to the hypothesis \eqref{H4}, it is consistent with \eqref{Lestimate} if we choose $B$ then $A$ large enough.
The term $L_{k,2}^1$ is estimated using H\"older inequality, Young's inequality, Gagliardo-Nirenberg-Sobolev inequality and the chemical gradient $\pa_y c_0$ $L^4$ estimate \eqref{nasc0Lpestimate} as follows
\begin{align*}
L_{k,2}^1\leq\frac{1}{AB}||\pa_y\wh{n}_k||_2^2+\frac{B}{A}||\wh{n}_k||_4^2||\pa_y{c}_0||_4^2
\lesssim\frac{1}{AB}||\pa_y\wh{n}_k||_2^2+\frac{B}{A}||\wh{n}_k||_2^2||\pay{c}_0||_4^{8/3}
\lesssim\frac{1}{AB}||\pa_y\wh{n}_k||_2^2+\frac{B}{A}||\wh{n}_k||_2^2M^{8/3},
\end{align*}
which is consistent with \eqref{Lestimate} if $B$, then $A$, are chosen large.

Next, we decompose the $L_{k}^\al$ term into two parts:
\bel\label{Lk1al1Lk2al}
L_{k}^\al=-2Re\lan \al \pa_{yy}\widehat{n}_k,-L_k\ran=\frac{2}{A}Re\lan \al \pa_{yy}\widehat{n}_k,\na\cdot(\na \wh{c}_k n_0)+\pay(\pay c_0 \wh{n}_k)\ran=:L_{k,1}^\al+L_{k,2}^\al.
\eel
Combining the definition of $\al$ \eqref{def:abg}, the hypothesis \eqref{H3}, \eqref{H4}, H\"older inequality, Gagliardo-Nirenberg-Sobolev inequality and Young's inequality, the $L_{k,1}^\al$ term can be estimated as follows:
\begin{align*}L_{k,1}^\al=&\frac{2}{A}Re\lan \al\pa_{yy}\wh{n}_k,(\pa_{yy}-k^2)\wh{c}_k n_0\ran+\frac{2}{A}Re\lan \al\pa_{yy}\wh{n}_k,\pa_y \wh{c}_k\cdot \pa_y n_0\ran\\
\leq&\frac{1}{AB}||\sqrt{\al}\pa_{yy}\wh{n}_k||_2^2+\frac{B}{A^{5/3}}||\pa_{yy}\wh{c}_k||_2^2||n_0||_\infty^2+\frac{B|k|^4}{A^{5/3}}||\wh{c}_k||_2^2||n_0||_\infty^2+\frac{1}{BA}||\sqrt{\al}\pa_{yy}\wh{n}_k||_2^2+\frac{B\al}{A}||\pa_y\wh{c}_k||_\infty^2||\pay n_0||_2^2\\
\lesssim&\frac{1}{AB}||\sqrt{\al}\pa_{yy}\wh{n}_k||_2^2+\frac{B}{A^{5/3}}||\pay^2\wh{c}_k||_2^2C_{n,\infty}^2+\frac{B}{A^{5/3}}|k|^4||\wh{c}_k||_2^2C_{n,\infty}^2+\frac{||\pa_{yy} \wh{c}_k||_2^2}{A^{5/3}}+\frac{B^2}{A^{5/3}}||\pa_y\wh{c}_k||_2^2C_{n_0,\dot H^1}^4,
\end{align*}
which is consistent with the \eqref{TimeDerivativeOfF} if we choose $B$ then $A$ to be large enough.
For the $L_{k,2}^\al$ term in \eqref{Lk1al1Lk2al}, we can estimate them using the definition of $\al$ \eqref{def:abg}, hypothesis \eqref{H3}, Lemma \ref{nasc0Lpestimate}, Gagliardo-Nirenberg-Sobolev inequality, H\"older inequality and Young's inequality as follows:
\begin{align*}
L_{k,2}^\al=&\frac{2\sqrt{\ep_\al}}{A^{4/3}|k|^{1/3}}\lan \sqrt{\al}\pa_{yy}\wh{n}_k,\de_y c_0 \wh{n}_k+\pa_yc_0\pa_y\wh{n}_k\ran\\
\leq&\frac{1}{AB}||\sqrt{\al}\pa_{yy}\wh{n}_k||_2^2+\frac{B}{A^{5/3}|k|^{2/3}}||\de_y c_0||_4^2||\wh{n}_k||_4^2+\frac{B}{A^{5/3}|k|^{2/3}}||\pa_y c_0||_\infty^2||\pa_y \wh{n}_k||_2^2\\
\lesssim&\frac{1}{AB}||\sqrt{\al}\pa_{yy}\wh{n}_k||_2^2+\frac{B}{A^{5/3}|k|^{2/3}}||\de_y c_0||_4^2||\wh{n}_k||_2^{3/2}||\pa_y\wh{n}_k||_2^{1/2}+\frac{B}{A^{5/3}|k|^{2/3}}\sup_{0\leq s\leq t}||n_0(s)||_2^2||\pa_y\wh{n}_k||_2^{2}\\
\lesssim&\frac{1}{AB}||\sqrt{\al}\pa_{yy}\wh{n}_k||_2^2+\frac{B}{A|k|^{2/3}}||\de_y c_0||_4^2||\wh{n}_k||_2^2+\frac{B}{A^{11/3}|k|^{2/3}} ||\de_y c_0||_4^2  ||\pa_y\wh{n}_k||_2^2\\
&+\frac{B\CC^2}{A^{5/3}|k|^{2/3}}||\pa_y\wh{n}_k||_2^{2}\\
\lesssim&\frac{1}{AB}||\sqrt{\al}\pa_{yy}\wh{n}_k||_2^2+\frac{B}{A|k|^{2/3}}(C_{2,\infty}^2+C_{n_0,\dot H^1}^2)||\wh{n}_k||_2^2+\frac{B}{A^{5/3}|k|^{2/3}} (C_{2,\infty}^2+C_{n_0, \dot H^1}^2)  ||\pa_y\wh{n}_k||_2^2,
\end{align*}
which is consistent with \eqref{TimeDerivativeOfF} if we choose $B$ then $A$ large.

For the $L_{k}^\beta$ terms in \eqref{ddtPhikN}, we decompose it into four parts
\begin{align}
L_{k}^\beta=&-\frac{2k}{A}Re[\lan i\beta u'(\na\cdot(\na \wh{c}_k n_0)+\pay (\pay c_0\wh{n}_k)),\pa_y\widehat{n}_k\ran+\lan i\beta u'\widehat{n}_k,\pa_y (\na\cdot(\na \wh{c}_k n_0)+\pay (\pay c_0\wh{n}_k))\ran]\nonumber\\
=&L_{k,1}^\beta+L_{k,2}^\beta+L_{k,3}^\beta+L_{k,4}^\beta.
\label{Lkbetaterms}
\end{align}
The term $L_{k,1}^\beta$ can be estimated using the definition of $\al,\beta$ \eqref{def:abg}, hypothesis \eqref{H4}, the fact that $||u'||_{W^{1,\infty}}\leq C$, integration by parts, H\"older inequality and Young's inequality as follows:
\begin{align*}
L_{k,1}^\beta=&\frac{2k}{A}Re\lan i\beta u'' \pa_y \wh{c}_k n_0,\pa_y \wh{n}_k\ran +\frac{2k}{A}Re\lan i\beta u'\na \wh{c}_k n_0,\pa_{y}\na \wh{n}_k\ran\\
\lesssim& \frac{||\pa_y\wh{n}_k||_2^2}{A^{4/3}}+\frac{1}{A^{4/3}}||\pa_y\wh{c}_k||_2^2||n_0||_\infty^2+\frac{1}{AB}||\sqrt{\al}\pa_{yy}\wh{n}_k||_2^2+\frac{B}{A}||\pa_y\wh{c}_k||_2^2||n_0||_\infty^2\\
&+\frac{1}{AB}||\pa_y \wh{n}_k||_2^2+\frac{B}{A^{5/3}}|k|^{2}||k\wh{c}_k||_2^{2}||  n_0||_\infty^{2}\\
\lesssim& \frac{||\pa_y\wh{n}_k||_2^2}{A^{4/3}}+\frac{1}{A^{4/3}}||\pa_y\wh{c}_k||_2^2C_{n,\infty}^2+\frac{1}{AB}||\sqrt{\al}\pa_{yy}\wh{n}_k||_2^2+\frac{B}{A}||\pa_y\wh{c}_k||_2^2C_{n,\infty}^2\\
&+\frac{1}{AB}||\pa_y \wh{n}_k||_2^2+\frac{B}{A^{5/3}}|k|^{4}||\wh{c}_k||_2^{2}C_{n,\infty}^{2},
\end{align*}
which is consistent with \eqref{Lestimate} if we choose $B$ then $A$ large.
The term $L_{k,3}^\beta$ is the same as the $L_{k,1}^\beta$ term up to the following controllable term, which can be estimated using hypothesis \eqref{H4}, the fact that $||u'''||_\infty\leq C$, H\"older inequality and Young's inequality as follows
\begin{align*}
\frac{2k}{A}Re\lan i\beta u''\wh n_k,\na\cdot (\na \wh c_k n_0)\ran=&-\frac{2k}{A}Re\lan i\beta u'''\wh{n}_k,\pa_y\wh{c}_k n_0\ran-\frac{2k}{A}Re\lan i\beta u'' \na \wh n_k,\na\wh c_kn_0\ran\\
\leq &\frac{1}{A^{4/3}}||\wh{n}_k||_2^2+\frac{1}{A^{4/3}}||\pa_y \wh{c}_k||_2^2||n_0||_\infty^2+\frac{1}{A^{4/3}}||\na \wh n_k||_2^2+\frac{1}{A^{4/3}}||\na\wh c_k||_2^2||n_0||_\infty^2\\
\leq& \frac{1}{A^{4/3}}||\wh{n}_k||_2^2+\frac{1}{A^{4/3}}||\pa_y \wh{c}_k||_2^2C_{n,\infty}^2+\frac{1}{A^{4/3}}||\na \wh n_k||_2^2+\frac{1}{A^{4/3}}||\na\wh c_k||_2^2C_{n,\infty}^2,
\end{align*}
which is consistent with \eqref{TimeDerivativeOfF} if we choose $B$ then $A$ large.
For the $L_{k,2}^\beta$ term in \eqref{Lkbetaterms}, it can be estimated using integration by parts, hypothesis \eqref{H3}, the chemical gradient $\pa_y c_0$ $L^4$ estimate \eqref{nasc0Lpestimate}, the fact that $||u''||_\infty\leq C$, definition of $\al,\beta$ \eqref{def:abg}, H\"older inequality and Young's inequality as follows:
\begin{align*}
L_{k,2}^\beta
=&\frac{2k}{A}Re\lan i\beta u''\pay c_0\wh{n}_k,\pa_y\widehat{n}_k\ran+\frac{2k}{A}Re\lan i\beta u'\pay c_0\wh{n}_k,\pa_y\pay\widehat{n}_k\ran\\
\lesssim&\frac{1}{A^{4/3}B}||\pa_y \wh{n}_k||_2^2+\frac{B}{A^{4/3}}||\pa_y c_0||_4^2||\wh{n}_k||_2^{3/2}||\pa_y\wh{n}_k||_2^{1/2}\\
&+\frac{1}{AB}||\sqrt{\al}\pa_{yy} \wh{n}_k||_2^2+\frac{B|k|^{4/3}}{A^{2/3}}||\sqrt{\beta}u'\wh{n}_k||_2^2||\pay c_0||_\infty^{2}\\
\lesssim&\frac{1}{A^{4/3}B}||\pa_y \wh{n}_k||_2^2+\frac{B}{A^{4/3}}M^{8/3}||\wh{n}_k||_2^{2}+\frac{1}{AB}||\sqrt{\al}\pa_{yy} \wh{n}_k||_2^2+\frac{B|k|^{4/3}}{A^{2/3}}||\sqrt{\beta}u'\wh{n}_k||_2^2C_{n_0,L^2}^{2},
\end{align*}
which is consistent with \eqref{TimeDerivativeOfF} if we choose $B$ then $A$ large. The $L_{k,4}^\beta$ term is similar to the $L_{k,2}^\beta$ up to the following controllable term, which can be estimated using hypothesis \eqref{H3}, the chemical gradient $\pa_y c_0$ $L^4$ estimate \eqref{nasc0Lpestimate}, H\"older inequality, the boundedness of $||u'||_{W^{2,\infty}}$ and the definition of $\al,\beta$ \eqref{def:abg} as follows
\begin{align*}
\frac{2k}{A}Re\lan i\beta u''\wh n_k, \pay(\pay c_0 \wh n_k)\ran=&-\frac{2k}{A}Re\lan i\beta u'''\wh{n}_k,\pa_y c_0\wh{n}_k\ran-\frac{2k}{A}Re\lan i\beta u''\pay\wh n_k,\pay c_0\wh n_k\ran\\
\lesssim& \frac{||\wh{n}_k||_2^2||\pa_y c_0||_\infty}{A^{4/3}}+\frac{1}{A^{4/3}}||\pay \wh n_k||_2^2+\frac{1}{A^{4/3}}||\pay c_0||_4^{8/3}||\wh n_k||_2^2\\
\lesssim&\frac{||\wh{n}_k||_2^2C_{2, \infty}}{A^{4/3}}+\frac{1}{A^{4/3}}||\pay \wh n_k||_2^2+\frac{M^{8/3}}{A^{4/3}}||\wh n_k||_2^2. 
\end{align*}
It is consistent with \eqref{TimeDerivativeOfF} if we choose $B$ then $A$ large.

Finally, we decompose the $L_k^\gamma$ term in \eqref{ddtPhikN} as follows:
\bel\label{Lkgammaterms}
L_k^\gamma=-\frac{2|k|^2}{A}Re\lan \gamma (u')^2\widehat{n}_k,\na\cdot(\na \wh{c}_k n_0)+\pay(\pay c_0\wh{n}_k)\ran=: L_{k,1}^\gamma+L_{k,2}^\gamma.
\eel
The term $L_{k,1}^\gamma$ can be estimated using integration by parts, hypothesis \eqref{H3}, the fact that $||u'||_{W^{1,\infty}}\leq C$, definition of $\gamma$ \eqref{def:abg}, H\"older inequality and Young's inequality as follows:
\begin{align*}
L_{k,1}^\gamma
\lesssim&\bigg|\frac{2|k|^2}{A}Re\lan \gamma u'2 u''\wh{n}_k+\gamma u'^2\pa_y\wh{n}_k,\pa_y\wh{c}_k n_0\ran\bigg|+\frac{2|k|^2}{A}||\wh{n}_k||_2||\wh{c}_k||_2||n_0||_\infty\\
\lesssim&\frac{2}{A}||\wh{n}_k||_2^2+\frac{2}{A}||\pa_y\wh{c}_k||_2^2C_{n,\infty}^2+\frac{1}{AB}||\pay\wh{n}_k||_2^2+\frac{B}{A}||\pa_y\wh{c}_k||_2^2C_{n,\infty}^2+\frac{1}{AB}|k|^2||\wh{n}_k||_2^2+\frac{B}{A}||\wh{c}_k||_2^2|k|^2C_{n,\infty}^2,
\end{align*}
which is consistent with \eqref{TimeDerivativeOfF} if we choose $B$ then $A$ large.
The term $L_{k,2}^\gamma$ in \eqref{Lkgammaterms} can be estimated using integration by parts, hypothesis \eqref{H3}, the fact that $||u'||_{W^{1,\infty}}\leq C$, definition of $\gamma$ \eqref{def:abg}, H\"older inequality and Young's inequality as follows:
\begin{align*}
L_{k,2}^\gamma=&\frac{2|k|^2}{A}Re\lan \gamma u' u'' 2 \wh{n}_k+\gamma (u')^2\pa_y\wh{n}_k,\pa_y c_0 \wh{n}_k\ran\\
\lesssim &\frac{2}{A}||\wh{n}_k||_2^2||\pa_y c_0||_\infty+\frac{1}{AB}||\pa_y \wh{n}_k||_2^2+\frac{B}{A}||\wh{n}_k||_2^2||\pa_y c_0||_\infty^2\\
\lesssim &\frac{2}{A}||\wh{n}_k||_2^2C_{2,\infty}+\frac{1}{AB}||\pa_y \wh{n}_k||_2^2+\frac{B}{A}||\wh{n}_k||_2^2C_{2,\infty}^2,
\end{align*}
which is consistent with \eqref{TimeDerivativeOfF} if we choose $B$ then $A$ large.
\subsubsection{Estimate on $NL$ terms} \label{sec:NLED}
As these terms are nonlinear in non-zero frequencies, it is more natural to consider all of the frequencies at once. For the $NL_k^1$ term in \eqref{ddtPhikN}, we estimate it as follows
\begin{align*}
-\sum_{k \neq 0}2 Re\lan NL_k,\widehat{n}_k\ran = -2\brak{\frac{1}{A}\grad \cdot \left( n_{\neq} \grad c_{\neq}\right),n_{\neq}} = \frac{2}{A} \brak{ n_{\neq} \grad c_{\neq},\grad n_{\neq}}
\leq \frac{2}{A} \norm{\grad c_{\neq}}_{\infty }\norm{\grad n_{\neq}}_{2} \norm{n_{\neq}}_{2}.
\end{align*}
By hypothesis \eqref{H5}, for some constant $B > 0$,
\begin{align*}
-\sum_{k \neq 0}2 Re\lan NL_k,\widehat{n}_k\ran
& \lesssim \frac{1}{AB}\norm{\grad n_{\neq}}_{2}^2 + \frac{B}{A}\CC^{2}\norm{n_{\neq}}_{2}^2 .
\end{align*}
By first choosing $B$ large relative to the implicit constant, and then choosing $A$ large (relative to constants and $B$), these terms are absorbed by the negative terms in \eqref{ddtPhikN}.

For the $NL_k^{\al}$ term in \eqref{ddtPhikN}, we use the bootstrap hypotheses to deduce (using the definition of $\alpha$;  recall that $\alpha$ is a Fourier multiplier in $x$),
\begin{align*}
2Re \sum_{k \neq 0} \brak{\alpha(k) \partial_{yy} \widehat{n}_k, NL_k} & = \frac{2}{A} \brak{\alpha(\partial_x) \pa_{yy} n_{\neq}, \grad \cdot \left( n_{\neq} \grad c_{\neq}\right)} \\
& \lesssim \frac{1}{A^{4/3}} \norm{\sqrt{\al}\pa_{yy} n_{\neq}}_2\left(\norm{\grad n_{\neq}}_2 \norm{\grad c_{\neq}}_{\infty} + \norm{n_{\neq}}_\infty \norm{\na^2 c_{\neq}}_{2}\right) \\
& \lesssim \frac{1}{A^{4/3}} \norm{\sqrt{\al}\pa_{yy} n_{\neq}}_2^2+\frac{\CC^2}{A^{4/3}}\norm{\grad n_{\neq}}_2^2  + \frac{C_\infty^2}{A^{4/3}}\norm{\na^2c_{\neq}}_2^2,
\end{align*}
and choosing $A$ large, these terms are absorbed by the negative terms in \eqref{ddtPhikN}, \eqref{ddtPhikPaxC} and \eqref{ddtPhikPayC}.

There are two terms in $NL_k^{\beta}$ in \eqref{ddtPhikN}; we estimate the first as follows (using that $\beta(k) \lesssim A^{-1/3}\abs{k}^{-4/3}$ and defines a self-adjoint operator, chemical gradient $\pay c_0$ $L^p$ estimate \eqref{nasc0Lpestimate}, the fact that $||u'||_\infty\leq C$ and that $u$ does not depend on $x$):
\begin{align}
-2k\sum_{k \neq 0} Re\brak{i\beta(k) u'NL_k,\pa_y \widehat{n}_k} & =- \frac{2}{A}\brak{\beta(\partial_x) u' \pa_x\grad\cdot(n_{\neq} \grad c_{\neq}),\pa_y n_{\neq}} \nonumber \\
& \lesssim \frac{1}{A^{4/3}}\norm{\pa_y n_{\neq}}_2 (\norm{n_{\neq}}_\infty\norm{\na^2 c_{\neq}}_2+\norm{\na n_{\neq}}_2\norm{\na c_{\neq}}_\infty) \nonumber\\
& \lesssim  \frac{1}{AB}\norm{\pa_y n_{\neq}}_2^2+ \frac{B}{A^{5/3}}\norm{\na n_{\neq}}_2^2\norm{\na c_{\neq}}_\infty^{2}+ \frac{B}{A^{5/3}}\norm{ n_{\neq}}_\infty^{2}\norm{\na^2 c_{\neq}}_2^2. \label{firstbetaNL}
\end{align}
Recalling the bootstrap hypothesis, these terms are absorbed by the negative terms in \eqref{ddtPhikN}, \eqref{ddtPhikPayC} and \eqref{ddtPhikPaxC} given $B$ then $A$ large enough.
For the second term in $NL_k^{\beta}$ we use integration by parts to decompose it as follows
\begin{align*}
2Re\sum_{k \neq 0} \brak{\beta(k) u' ik\wh{n}_{k},\partial_y NL_k} & = \frac{2}{A}\brak{\beta(\partial_x) u'\pa_x n_{\neq}, \pa_y \grad\cdot(n_{\neq} \grad c_{\neq})}  \\
& = - \frac{2}{A}\brak{\beta(\partial_x) u'' \pa_x n_{\neq}, \grad\cdot(n_{\neq} \grad c_{\neq})} - \frac{2}{A}\brak{\beta(\partial_x) u' \pa_x\pa_y n_{\neq}, \grad\cdot(n_{\neq} \grad c_{\neq})}  \\
& = NL^\beta_{k,1} + NL^\beta_{k,2}.
\end{align*}
Using the definition of $\beta(\partial_x)$, the fact that $||u''||_\infty\leq C$ and that $u$ does not depend on $x$, we have,
\begin{align*}
\abs{NL^\beta_{k,1}} \lesssim& \frac{1}{A}\norm{n_{\neq}}_2^2+ \frac{1}{A^{5/3}}\norm{\na n_{\neq}}_2^2\norm{\na c_{\neq}}_\infty^{2}+ \frac{1}{A^{5/3}}\norm{ n_{\neq}}_\infty^{2}\norm{\na^2 c_{\neq}}_2^2.\end{align*}
yielding terms which are absorbed by the negative terms in \eqref{ddtPhikN}, \eqref{ddtPhikPaxC} and \eqref{ddtPhikPayC} for $A$ sufficiently large. The treatment of $ NL^\beta_{k,2}$ is similar to \eqref{firstbetaNL}, hence it is omitted for the sake of brevity.

Turn finally to term $NL_k^{\gamma}$ in \eqref{ddtPhikN} associated with $\gamma$:
\begin{align}
NL_k^{\gamma}
 & = -\frac{2}{A}\brak{\gamma(\partial_x) u' \partial_x n_{\neq}, u' \partial_x \grad\cdot(n_{\neq} \grad c_{\neq})} \nonumber\\
& = \frac{2}{A}\brak{\gamma(\partial_x) u' \partial_x \grad n_{\neq}, u' \partial_x(n_{\neq} \grad c_{\neq})} +\frac{4}{A}\brak{\gamma(\partial_x) u' u'' \partial_{x} n_{\neq}, \partial_x(n_{\neq} \partial_y c_{\neq})}\nonumber \\
&=:NL^\gamma_{k,1}+NL^\gamma_{k,2}.\label{NL gamma}
\end{align}
Then we use $\gamma(\partial_x) = \epsilon_\gamma \abs{\partial_x}^{-2}$ and interpolation to deduce the following bound for $NL^\gamma_{k,1}$:
\begin{align*}
NL^\gamma_{k,1}\lesssim&\frac{1}{A}||\sqrt{\gamma}u'\pa_x\na n_{\neq}||_2||\sqrt{\gamma}\pa_x(u'n_{\neq}\na c_{\neq})||_2\\
\lesssim&\frac{1}{A}||\sqrt{\gamma}u'\pa_x\na n_{\neq}||_2||u'n_{\neq}\na c_{\neq}||_2\\
\lesssim& \frac{1}{AB}\norm{\sqrt{\gamma} u' \partial_x \grad n_{\neq}}_2^2 + \frac{B}{A}\norm{ n_{\neq}}_\infty^2\norm{\na c_{\neq}}_2^2.
\end{align*}
For $B$, then $A$, chosen large, we may absorb these contributions in the negative terms in \eqref{ddtPhikN}, \eqref{ddtPhikPaxC} and \eqref{ddtPhikPayC}. Next we estimate the $NL^\gamma_{k,2}$ term in (\ref{NL gamma}) using the definition of $\gamma$ \eqref{def:abg}, the fact that $||u''||_\infty\leq C$ and the hypothesis \eqref{H5} as follows
\begin{align*}
NL^\gamma_{k,2}\lesssim&\frac{1}{A}||\sqrt{\gamma} u'|\pa_x|n_{\neq}||_2||n_{\neq}\pay c_{\neq}||_2 \lesssim \frac{1}{A}||\sqrt{\gamma} u'|\pa_x|n_{\neq}||_2^2+\frac{\CC^2}{A}||n_{\neq}||_2^2.
\end{align*}
Hence, for $A$ chosen large, we may absorb these contributions in the negative terms in \eqref{ddtPhikN}. This finishes the estimate of the $NL$ terms.

\section{Nonzero mode $L_t^2 \dot{H}_{x,y}^1$ estimate \eqref{ctrl:L2H1}}
The nonzero mode $L_t^2 \dot{H}_{x,y}^1$ estimate \eqref{ctrl:L2H1} comes from an estimate on the $\frac{d}{dt}||{n}_{\neq}||_2^2$ and the knowledge that $||{n}_{\neq}||_2^2$ is bounded by $4C_{ED}\left(||n_{in}||_{H^1}^2+1\right)$ from Hypothesis \eqref{H2}.
Indeed, from the nonzero mode equation \eqref{nonzeromodeN} and Lemma \ref{Lem:nasc0Lpestimate}, there holds for some universal constant $B$,
\begin{align}
\frac{1}{2}\frac{d}{dt}||{n}_{\neq}||_2^2=&\brak{n_{\neq},\frac{1}{A}\de n_{\neq}-\frac{1}{A}\pay(\pay c_0\cdot n_{\neq})-\frac{1}{A}\na\cdot(\na c_{\neq} n_0)-\frac{1}{A}(\na\cdot(\na c_{\neq} n_{\neq}))_{\neq}}\nonumber\\
\leq & -\frac{1}{2A}||\na n_{\neq}||_2^2 + \frac{4}{A}\norm{\pay c_0}_4^2\norm{n_{\neq}}_4^2+ \frac{4}{A}\norm{\grad c_{\neq}}_2^2\norm{n_0}_\infty^2 + \frac{4}{A} ||\na c_{\neq}||_2^2 ||n_{\neq}||_\infty^2 \nonumber \\
 \leq &-\frac{1}{2A}||\na n_{\neq}||_2^2+\frac{BM^4}{A}||n_{\neq}||_2^2 + \frac{BC_{n,\infty}^2}{A} ||\na c_{\neq}||_2^2 .
\label{d dt n neq 2 2}
\end{align}
Recalling hypothesis \eqref{H2},
\begin{equation}
\frac{1}{A}\int_0^{T_\star}\left(||n_{\neq}||_2^2+||\na c_{\neq}||_2^2\right)dt\leq \frac{1}{A}\int_0^{T_\star}C_{ED}\left(||n_{in}||_{H^1}^2+1\right)e^{-ct/A^{1/3}} dt\leq \frac{C}{A^{2/3}},
\end{equation}
which implies the following by inequality \eqref{d dt n neq 2 2} given $A$ large,
\begin{align}
\frac{1}{A}\int_0^{T_\star}||\na n_{\neq}||_2^2dt\leq \frac{1}{A^{1/3}}+2||n_{in}||_{2}^2\leq 4||n_{in}||_{2}^2.
\end{align}
As a result, we have proved \eqref{ctrl:L2H1}.

\section{Zero mode estimate \eqref{ctrl:ZeroMd}}\label{G(t)trick}
Before estimating the $L^2$ norm of the solution, we note that by non-negativity and the divergence structure of the equation \eqref{KS advection}, the $L^1_y$ norm of $n_0(y)$ is constant in time $\norm{n_0}_{L^1(\rr)} = \frac{1}{2\pi}\norm{n}_{L^1(\Torus\times\rr)} = \frac{M}{2\pi}$. We first estimate $||{n}_0||_2^2$, then estimate $||\pa_y {n}_0||_2^2$.
From equation \eqref{zeromode} we have, by Minkowski's inequality and Lemma \ref{Lem:nasc0Lpestimate},
\begin{align*}
\frac{1}{2}\frac{d}{dt}||n_0||_2^2 =&\left\lan n_0,\frac{1}{A}\pa_{yy}{n}_0-\frac{1}{A}\pa_y(\pa_y{c}_0{n}_0)-\frac{1}{A}(\na\cdot(\na c_{\neq} n_{\neq}))_0\right\ran\\
=&-\frac{1}{A}||\pa_y {n}_0||_2^2+\frac{1}{A}\lan \pa_y{n}_0,\pa_y {c}_0 n_0\ran+\frac{1}{A}\lan \pa_y {n}_0,(\pa_y c_{\neq} n_{\neq})_0\ran\\
\leq&-\frac{1}{2A}||\pa_yn_0||_2^2+\frac{1}{A}||\pa_y {c}_0||_4^2||{n}_0||_2^{3/2}||\pay n_0||_2^{1/2}+\frac{1}{A}||(\pa_y {c}_{\neq}{n}_{\neq})_0||_{L^2(\mathbb{R})}^2\\
\lesssim&-\frac{1}{4A}||\pa_y{n}_0||_2^2+\frac{ M^{8/3}}{A} ||{n}_0||_2^2+\frac{1}{A}||\pa_y {c}_{\neq}||_{L^2(\mathbb{T}\times \rr)}^2||{n}_{\neq}||_{L^\infty(\mathbb{T}\times \rr)}^2.\\
\end{align*}
Recall the following Nash inequality on $\mathbb{R}$:
\bel\label{Nash ineq}
||\rho||_{L^2(\mathbb{R})}\lesssim ||\rho||_{L^1(\mathbb{R})}^{2/3}||\pa_y\rho||_{L^2(\mathbb{R})}^{1/3}.
\eel
Hence, by setting $\rho=n_0$, we have 
\be
-||\pa_y{n}_0||_2^2\leq-\frac{||{n}_0||_2^6}{C||n||_1^4}\leq -\frac{||{n}_0||_2^6}{CM^4}.
\ee
Combining this with the time evolution of $||n_0||_2^2$, we obtain
\begin{align}\label{n0L22_1}
\frac{d}{dt}|| {n}_0||_2^2\lesssim-\frac{1}{A}\frac{||{n}_0||_2^2}{M^4}\bigg(\frac{||{n}_0||_2^4}{C}-M^{20/3}\bigg)+\frac{1}{A}||\pa_y {c}_{\neq}||_{L^2(\mathbb{T}\times \rr)}^2||{n}_{\neq}||_{L^\infty(\mathbb{T}\times \rr)}^2 
.\end{align}
Define the following quantity $G$ to be
\begin{align}
G(t):=&\int_0^t\frac{B}{A}||\pay c_{\neq}||_2^2||n_{\neq}||_\infty^2d\tau, \quad \forall t\geq 0. \label{def:Gfirst}
\end{align}
By the bootstrap hypotheses,
\begin{equation}
G(t)\leq \int_0^t\frac{B}{A}C_{ED}(||n_{in}||_{H^1}^2+1)e^{-\eta\tau/A^{1/3}}C_{n,\infty}^2d\tau\leq \frac{C}{A^{2/3}},
\end{equation}
hence $G\lesssim 1$ given $A$ large.
Applying this in \eqref{n0L22_1}, we have
\be\ba
\frac{d}{dt}(||n_0||_2^2-G(t))\lesssim &-\frac{1}{A}\frac{||n_0||_2^2}{M^4}\left(\frac{||n_0||_2^4}{C}-M^{20/3}\right)\\
\lesssim&-\frac{1}{A}\frac{||n_0||_2^2}{M^4}\left(\frac{||n_0||_2^2-G(t)}{\sqrt{C}}-M^{10/3}\right)\left(\frac{||n_0||_2^2}{\sqrt{C}}+M^{10/3}\right).
\ea\ee
Choosing $A$ large relative to $||n_{in}||_{H^1}^2$, $\CC^2$ and universal constants, we have
\begin{align}
||n_0 ||_2^2& \lesssim G(t)+M^{10/3}+\norm{n_{in}}_2^2\nonumber \\
& \leq C\left( 1+  ||n_{in}||_2^2 +M^{10/3}\right)\nonumber\\
& =: C_{n_0,L^2}^2(||n_{in}||_2^2,M).\label{ML2}
\end{align}
This completes the estimate on $\norm{n_0}_2$. Combining with Lemma \ref{nasc0Lpestimate} and adjusting the constant $C_{n_0, L^2}$ defined in \eqref{ML2} yield the first estimate in conclusion (\ref{ctrl:ZeroMd}).

Before estimating $||\pa_y n_0||_2$, we first note the following estimate on $\int_0^{T_\star}||\na^2 c_{\neq}||_2^2dt$ from hypothesis \eqref{H2} and \eqref{ddtPhikPayC}, \eqref{ddtPhikPaxC},
\begin{align}\label{nanac}
\frac{1}{8A}\int_0^{T_\star}||\na^2c_{\neq}||_2^2dt\leq\mathcal{F}(0)-\mathcal{F}(T_\star)\leq C_{ED}\left(\norm{n_{in}}_{H^1}^2+1\right).
\end{align}
Now we estimate the $||\pa_y n_0||_2$. Estimating the time evolution of $||\pa_y n_0||_2$ using Gagliardo-Nirenberg-Sobolev inequality, Young's inequality, Minkowski inequality, Lemma \ref{Lem:nasc0Lpestimate} and the time integral estimate of $||\na^2 c_{\neq}||_2^2$ \eqref{nanac}, we have that 
\begin{align}
\frac{1}{2}&\frac{d}{dt}||\pa_y n_0||_2^2\nonumber\\
\leq &-\frac{||\pa_y^2 n_0||_2^2}{2A}+\frac{4||\pa_y^2 c_0||_2^2||n_0||_\infty^2}{A}+\frac{4||\pa_y c_0||_4^2||\pa_y n_0||_4^2}{A}+\frac{4}{A}||\pay(\pa_y c_{\neq}n_{\neq})_0||_2^2\nonumber\\
\leq&-\frac{||\pa_y^2 n_0||_2^2}{2A}+\frac{4C(\sup_{0\leq s\leq t}||\pa_y n_0(s)||_1^2+||\pay^2( c_{in})_0||_2^2)||n_0||_2||\pay n_0||_2}{A}+\frac{CM^2|| n_0||_2^{3/4}||\pa_y^2 n_0||_2^{5/4}}{A}\nonumber\\
&+\frac{4}{A}||\pa_y^2 c_{\neq}||_{L^2(\Torus\times \rr)}^2||n_{\neq}||_{L^\infty(\Torus\times \rr)}^2+\frac{4}{A}||\pa_y c_{\neq}||_{L^\infty(\Torus\times\rr)}^2||\pay n_{\neq}||_{L^2(\Torus\times \rr)}^2.\label{time evolution of na f L2}
\end{align}
We define
\begin{equation}\label{G2}
G(t):=\frac{4}{A}\int_0^{t}||\pa_y^2 c_{\neq}||_2^2||n_{\neq}||_{\infty}^2+||\pa_y c_{\neq}||_\infty^2||\pa_yn_{\neq}||_{2}^2ds.
\end{equation}
Note that by the hypothesis \eqref{H1} and time integral control \eqref{nanac}, we have that
\begin{align}
 G(t)\lesssim C_{2,\infty}^4
\end{align}
for all $t\leq T_\star$.
Now we apply Gagliardo-Nirenberg-Sobolev inequality, Lemma \ref{Lem:nasc0Lpestimate} and definition of $G(t)$ \eqref{G2} to rewrite the inequality \eqref{time evolution of na f L2} as follows:
\begin{align}
\frac{d}{dt}\left(||\pa_y n_0||_2^2-G(t)\right)\lesssim &\frac{1}{A}\bigg(-\frac{||\pa_y n_0||_2^2(||\pa_y n_0||_2^2-G)}{4CC_{n_0,L^2}^2}
+\CC^4||\pa_y n_0||_2^2\nonumber\\
&+\sup_{0\leq \tau\leq t}(||\pa_y n_0(\tau)||_2^2-G(\tau))+\sup_{0\leq\tau\leq t} G(\tau)+(M^{16/3}+1)C_{n_0,L^2}^{2}\bigg).\label{f 0 H1 0}
\end{align}
Now because $G(t)\lesssim C_{2,\infty}^4$, by a comparison principle, we can prove that
\begin{align}\label{f 0 H1}
||\pa_y n_0||_2\leq 2C_{n_0,\dot H^1},
\end{align}
where $C_{n_0,\dot H^1}$ is chosen properly.
The reasoning is as follows. Since we can set the $C_{H^1}'$ large such that $||\pa_y (n_{in})_0||_2^2\ll C_{H^1}'^2$, $||\pa_y n_0||_2^2-G(t)$ will reach the value $C_{H^1}'^2$ at the first time $t_\star>0$. At time $t_\star$, we have that $\sup_{0\leq\tau\leq t_\star}\left(||\pa_y n_0(\tau)||_2^2-G(\tau)\right)=||\pa_y n_0(t_\star)||_2^2-G(t_\star)=C_{H^1}'^2$. Combining this fact and the differential inequality \eqref{f 0 H1 0} yields
\begin{align*}
\frac{d}{dt}\left(||\pa_y n_0||_2^2-G\right)\bigg|_{t=t_\star}\lesssim &\frac{1}{A}\bigg(-\frac{||\pa_y n_0(t_\star)||_2^2C_{H^1}^{'2}}{4CC_{n_0,L^2}^2}
+\CC^4||\pa_y n_0(t_\star)||_2^2\\
&+||\pa_y n_0(t_\star)||_2^2-G(t_\star)+C_{2,\infty}^4+M^{16/3}C_{n_0,L^2}^{2}\bigg)\\
\lesssim&\frac{1}{A}\left(-\frac{(C_{H^1}')^4}{4C C_{n_0, L^2}^2}+\CC^4(C_{H^1}')^2+\CC^8\right)\\
<&0.
\end{align*}
The last inequality $<0$ is true if we pick $C_{H^1}'$ large enough. On the other hand, $\frac{d}{dt}\left(||\pa_y n_0||_2^2-G\right)\bigg|_{t=t_\star}\geq 0$ at the first break through time $t_\star$. As a result, we reach a contradiction. Therefore, we have that
\begin{align*}
||\pay n_0||_{L^\infty_t(0,T_\star; L^2_y)}^2\leq sup_{0\leq t\leq T_\star} G(t)+C_{H^1}'^2\leq BC_{2,\infty}^6.
\end{align*}
Now we just need to choose $C_{n_0,\dot H^1}^2$ much larger than the right hand side to conclude the proof of \eqref{ctrl:ZeroMd}.

\section{Uniform $L^\infty$ control \eqref{ctrl:n_Linf} and \eqref{ctrl:na_c_Linf}}
In this section we prove the uniform $L^\infty$ control \eqref{ctrl:n_Linf} and \eqref{ctrl:na_c_Linf}. We separate the proof into two different time regimes, namely, the initial time $t\leq A^{1/3+\epsilon}$ and the long time $t\geq A^{1/3+\epsilon}$. Here $\epsilon>0$ is a small constant determined by the proof.  For the sake of clarity, we use $C_{n,\infty}^{in}, \enskip C_{\na c_{\neq},\infty}^{in}$ to denote bounds in the initial time and $C_{n,\infty}^{long}, \enskip C_{\na c_{\neq},\infty}^{long}$ to denote bounds in the long time. At the end of the proof, we will take the $C_{n,\infty}$ to be large compared to $C_{n,\infty}^{in}$ and $C_{n,\infty}^{long}$ and take the $C_{\na c_{\neq},\infty}$ large compared to $C_{\na c_{\neq},\infty}^{in}$ and $C_{\na c_{\neq},\infty}^{long}$.

\subsection{Initial Time Layer Estimate}
In this subsection, we would like to prove the following lemma:
\begin{lem}
Under the assumptions of Proposition \ref{prop:boot2D}, there exist a constant $0<\epsilon<\frac{1}{12}$ independent of the solution and constants $C_{n,\infty},C_{\na c_{\neq},\infty},C_{\pa_xn,\infty}$ depending on $C_{ED}, n_{in}, M$ such that the following estimates hold on the time interval $0\leq t\leq A^{1/3+\ep}$ when $A$ is chosen large enough:\begin{subequations}\label{ctrl_in}
\begin{align}
||n(t)||_{{\infty}}\leq &C_{n,\infty}^{in}(n_{in}, C_{ED},M);\label{ctrl:n_Linf_in}\\
||\na c_{\neq}(t)||_{{\infty}}\leq& C_{\na c_{\neq},\infty}^{in}(n_{in}, C_{ED}, M);\label{ctrl:na_c_Linf_in}\\
||\pa_x n(t)||_\infty\leq &C_{\pa_xn,\infty}(||n_{in}||_{H^1}),\quad \forall t\in[0,A^{1/3+\ep}].\label{Conclusion_on_paxn}
\end{align}
\end{subequations}
\end{lem}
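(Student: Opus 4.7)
The plan is to establish each of the three $L^\infty$ bounds separately over the short initial layer $t \in [0, A^{1/3+\epsilon}]$, exploiting the fact that on this window the ratio $t/A \leq A^{-2/3+\epsilon}$ is small for any $\epsilon < 2/3$. The three estimates are then chained: a bound on $\pa_x n$ yields a bound on $n_{\neq}$ (and hence on $n$), and these feed the Duhamel estimate for $\nabla c_{\neq}$.

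First I would commute $\pa_x$ through \eqref{KS_1}. Since the shear profile $u$ depends only on $y$, the advection commutes with $\pa_x$, giving
\begin{equation*}
\pa_t(\pa_x n) + u(y)\pa_x(\pa_x n) - \tfrac{1}{A}\de(\pa_x n) = -\tfrac{1}{A}\bigl(\na\cdot(\pa_x n\,\na c) + \na\cdot(n\,\pa_x \na c)\bigr).
\end{equation*}
Representing $\pa_x n$ via Duhamel with the heat-advection semigroup $S(t)$, which is $L^\infty$-contractive since $(u,0)$ is divergence free, and using the parabolic smoothing $\|\na S(\sigma) f\|_\infty \lesssim (A/\sigma)^{1/2}\|f\|_\infty$, I would derive
\begin{equation*}
\|\pa_x n(t)\|_\infty \leq \|\pa_x n_{in}\|_\infty + A^{-1/2} \int_0^t (t-\tau)^{-1/2} \bigl(\|\pa_x n\|_\infty\|\na c\|_\infty + \|n\|_\infty\|\pa_x \na c\|_\infty\bigr)\, d\tau.
\end{equation*}
The prefactor satisfies $A^{-1/2}t^{1/2} \lesssim A^{-1/3+\epsilon/2}$, which is $o(1)$; a continuity/bootstrap argument using the a priori hypotheses \eqref{H4} and \eqref{H5} then closes $\|\pa_x n(t)\|_\infty \leq C_{\pa_x n, \infty}$.

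Next, because $n_{\neq}$ has zero mean in $x$ over $\T$, Poincar\'e yields $\|n_{\neq}\|_\infty \lesssim \|\pa_x n\|_\infty$, and combining this with the 1D Gagliardo--Nirenberg embedding $\|n_0\|_\infty \lesssim \|n_0\|_2^{1/2}\|\pa_y n_0\|_2^{1/2}$ together with the zero-mode estimates from Section~\ref{G(t)trick} produces \eqref{ctrl:n_Linf_in}. For \eqref{ctrl:na_c_Linf_in} I apply Duhamel to \eqref{nonzeromodeC}. Since $\pa_x$ commutes with the advection,
\begin{equation*}
\|\pa_x c_{\neq}(t)\|_\infty \lesssim \|\pa_x(c_{in})_{\neq}\|_\infty + \tfrac{t}{A}\sup_{\tau\leq t}\bigl(\|\pa_x n_{\neq}\|_\infty + \|\pa_x c_{\neq}\|_\infty\bigr) \lesssim A^{-q} + A^{-2/3+\epsilon}C_{\pa_x n,\infty},
\end{equation*}
which is small. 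For $\pa_y c_{\neq}$, the evolution produces the shear destabilizing term $u'(y)\pa_x c_{\neq}$, and integrating gives a contribution $\lesssim \|u'\|_\infty \cdot t \cdot \|\pa_x c_{\neq}\|_\infty \lesssim A^{1/3+\epsilon}\bigl(A^{-q} + A^{-2/3+\epsilon}\bigr)$, which remains $O(1)$ as long as $\epsilon < q - 1/3$ and $\epsilon < 1/6$. The remaining source/diffusion terms in the Duhamel formula are controlled analogously.

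The main obstacle, and the reason the exponent must be quite small (the lemma takes $\epsilon < 1/12$ to leave safety margin for the various source terms and for closing the $\pa_x n$ bootstrap), is exactly the shear destabilizing term $u'(y)\pa_x c_{\neq}$ in the equation for $\pa_y c_{\neq}$: naively it would force $\pa_y c_{\neq}$ to grow linearly in $t$ and exceed any $A$-independent bound over a window of length $A^{1/3+\epsilon}$. The only way to beat this growth is the hypothesis $\|\na(c_{in})_{\neq}\|_{H^1\cap W^{1,\infty}} \lesssim A^{-q}$ with $q > 1/2$, which forces $\pa_x c_{\neq}$ to stay small enough throughout the initial layer for the destabilizing integral to remain $O(1)$.
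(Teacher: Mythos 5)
Your high-level diagnosis of the difficulty is correct: the shear destabilizing term $u'(y)\pa_x c_{\neq}$ in the $\pa_y c_{\neq}$ evolution is what threatens the initial time layer, and the hypothesis $\|\na(c_{in})_{\neq}\|_{H^1\cap W^{1,\infty}}\lesssim A^{-q}$ with $q>1/2$ is precisely what tames it. Your bookkeeping that $\epsilon<\min\{1/6,\,q-1/3\}$ is what makes the $\int_0^t\|u'\pa_x c_{\neq}\|_\infty\,d\tau$ contribution $O(1)$ also agrees with the paper's estimate leading to \eqref{paxcneq_L2p_in}. However, the proposed execution via Duhamel has two genuine gaps.

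The first and more serious gap is the smoothing estimate $\|\na S(\sigma)f\|_\infty\lesssim (A/\sigma)^{1/2}\|f\|_\infty$ for the shear--advection semigroup $S(\sigma)$ generated by $\pa_t+u(y)\pa_x-\tfrac{1}{A}\de$. This holds for $\pa_x S(\sigma)$ because $\pa_x$ commutes with $u(y)\pa_x$, but it fails for $\pa_y S(\sigma)$: passing to the sheared coordinate $z=x-u(y)\sigma$ turns $\pa_y$ into $\pa_y-\sigma u'(y)\pa_z$, so the $L^1$ norm of $\pa_{y'}K_\sigma$ carries an extra factor of order $1+\sigma\|u'\|_\infty$. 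This is exactly the shear destabilizing effect that you flag as the main obstacle for $\pa_y c_{\neq}$; it is equally present in the kernel. When you integrate by parts to move $\na\cdot$ onto $S(t-\tau)$ in the Duhamel integral for $\pa_x n$, the $\pa_y$ component of the divergence picks up this degradation. Over the window $\sigma\le A^{1/3+\epsilon}$ the factor can be as large as $A^{1/3+\epsilon}$, and the integral $A^{-1/2}\int_0^t(1+(t-\tau))(t-\tau)^{-1/2}\,d\tau$ then behaves like $A^{-1/2}t^{3/2}\sim A^{3\epsilon/2}$, which \emph{grows} with $A$ rather than being $o(1)$. So the prefactor in your display is not actually small, and the continuity argument does not close. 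The paper avoids this by using $L^{2p}$ energy estimates and Moser--Alikakos iteration (your equations analogous to \eqref{nx infty two terms}, \eqref{pax_n_2p_initial_layer}); there the $\na\cdot$ structure is absorbed directly into the $-\tfrac{1}{A}\|\na(\pa_x n)^p\|_2^2$ dissipation via Cauchy--Schwarz, so no kernel-smoothing estimate in the bad direction is ever needed.

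The second gap is in your use of the 1D Gagliardo--Nirenberg bound $\|n_0\|_\infty\lesssim\|n_0\|_2^{1/2}\|\pa_y n_0\|_2^{1/2}$ plus the zero-mode hypothesis to conclude \eqref{ctrl:n_Linf_in}. The paper explicitly excludes $C_{n_0,\dot H^1}$ from $\CC$ and fixes the order of constant determination as $C_{ED}\Rightarrow C_{n_0,L^2}\Rightarrow C_{n,\infty},\,C_{\na c_{\neq},\infty}\Rightarrow C_{n_0,\dot H^1}\Rightarrow A_0$. Using $\|\pa_y n_0\|_{L^\infty_t L^2_y}\le 4C_{n_0,\dot H^1}$ while deriving $C_{n,\infty}$ reverses that order and makes $C_{n,\infty}$ implicitly depend on $C_{n_0,\dot H^1}$, which in turn depends on $\CC$ (and hence on $C_{n,\infty}$) via \eqref{f 0 H1 0}. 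That is a circularity. The paper's Moser--Alikakos route for $\|n\|_\infty$ only relies on $\|n_0\|_q$ through Lemma \ref{Lem:nasc0Lpestimate}, which needs $C_{n_0,L^2}$ and $M$ but not $C_{n_0,\dot H^1}$, so no such circularity arises there.

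A final, smaller issue: your Duhamel for $\pa_y c_{\neq}$ leaves the source $\tfrac{1}{A}\pa_y n_{\neq}$ "controlled analogously," but you have no $L^\infty$ control on $\pa_y n_{\neq}$; integrating the $\pa_y$ by parts runs into the same shear-degraded kernel estimate as above. The paper sidesteps this by an $L^{2p}$ energy estimate combined with the time-weighted norm $\mathcal{F}_p(t)=\|\pa_y c_{\neq}(t)\|_p^p\,e^{-pG_\infty(t)}$, whose weight $e^{-pG_\infty(t)}$ contributes a negative term exactly matching the size of $\|u'\pa_x c_{\neq}\|_{2p}$ and thus neutralizes the destabilizing term at the level of the differential inequality, before taking $p\to\infty$ via iteration.
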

\begin{rmk}
In the proof of the lemma, the shear flow destabilizing effect has to be treated carefully because the enhanced dissipation effect of the shear flow is too weak at the initial time. We will propagate the estimates \eqref{ctrl_in} till $t=A^{1/3+\ep}$. After this time threshold, the enhanced dissipation kicks in to stabilize the dynamics.
\end{rmk}
\begin{proof}
We use a bootstrap argument to prove the lemma. Assume that for constants $C_{n,\infty}^{in}, \enskip C_{\na c_{\neq},\infty}^{in},\enskip C_{\pa_x n,\infty}$ depending on the proof, $T_{\star\star}\in [0,A^{1/3+\ep}]$ is the maximal time on which the following hypothesis is satisfied:\begin{subequations}\label{H_in}
\begin{align}
||n(t)||_{{\infty}}\leq &2C_{n,\infty}^{in};\label{Hyp:n_Linf_in}\\
||\na c_{\neq}(t)||_{{\infty}}\leq& 2C_{\na c_{\neq},\infty}^{in};\label{Hyp:na_c_Linf_in}\\
||\pa_x n(t)||_\infty\leq &2C_{\pa_x n,\infty},\quad \forall t\in [0, T_{\star\star}), T_{\star\star}\leq \min\{A^{1/3+\ep},T_\star\}.\label{Hyp_on_paxn}
\end{align}\end{subequations}
We will show that all the estimates \eqref{H_in} hold on the same time interval $[0,T_\star]$ with '1' instead of '2' if we choose $A_0$ large. These improvements combined with the local well-posedness of the equation \eqref{KS advection} yield \eqref{ctrl_in}. 


We split the proof into three steps. In the first step, we obtain the improvement to \eqref{Hyp:n_Linf_in} together with a suboptimal estimate of $||\na c_{\neq}||_p,\forall p<\infty$. Here the estimate in $||\na c_{\neq}||_p,\forall p<\infty$ is suboptimal in the sense that on the interval $[0,T_{\star\star})$, the estimate loses a small power of $A$, i.e., $||\na c_{\neq}||_p\lesssim A^{\delta},\delta>0$. In order to compensate for the loss in powers of $A$, we need information about the higher regularity of $n_{\neq}$. This is why we propagate another estimate \eqref{Conclusion_on_paxn} in the initial time layer $[0,T_{\star\star})$. In the second step, we complete the proof of \eqref{Conclusion_on_paxn}. In the last step, we use the extra regularity information to get the optimal $L^\infty$ bound of $\na c_{\neq}$.

\textbf{First step:} We prove the improvement to \eqref{Hyp:n_Linf_in} on $[0,T_{\star\star})$. We start with the estimate on $||\pa_x c_{\neq}||_4$. Direct energy estimate yields
\begin{align}\label{dt_pax_c_L4}
\frac{d}{dt}||\pa_x c_{\neq}||_4^4\leq-\frac{3}{2A}||\na (\pa_x c_{\neq})^2||_2^2+\frac{6}{A}||\pa_x c_{\neq}||_4^2||n_{\neq}||_4^2-\frac{4}{A}||\pa_x c_{\neq}||_4^4.
\end{align}
Integration in time yields 
\begin{equation}\label{pax_cneq_L4_control}
||\pa_x c_{\neq}(t)||_4\leq\sqrt{3}\frac{\sqrt{t}}{A^{1/2}}\sup_{0\leq s \leq t}||n_{\neq}(s)||_4+||\pa_x(c_{in})_{\neq}||_4.
\end{equation}

With the equation \eqref{nonzeromodeC}
, we estimate the time evolution of the $L^4$ norm of $\pa_y c_{\neq}$:
\begin{align*}
\frac{d}{dt}||\pa_y c_{\neq}||_4^4\leq -\frac{3}{2A}||\na(\pa_y c_{\neq})^2||_2^2+\frac{6||n_{\neq}||_4^2||\pa_y c_{\neq}||_4^2}{A}+4||\pa_y c_{\neq}||_4^3||u'\pa_x c_{\neq}||_4-\frac{4}{A}||\pa_y c_{\neq}||_4^4.
\end{align*}
As in the $\pa_x c_{\neq}$ case, we drop the negative term at the moment, and end up with the following inequality
\begin{eqnarray}\label{pa_y_c_neq_4}
\frac{d}{dt}||\pa_yc_{\neq}||_4^4\leq \frac{6||n_{\neq}||_4^2||\pa_y c_{\neq}||_4^2}{A}+4||\pa_y c_{\neq}||_4^3||u'\pa_x c_{\neq}||_4.
\end{eqnarray}
Now the idea is to compare $||\pa_y c_{\neq}||_4$ with the solution to the following differential equation,
\begin{eqnarray}
\frac{d}{dt}f^4= 10f^3\left(||u'\pa_x c_{\neq}||_4+\frac{||n_{\neq}||_4^2}{A}+\frac{1}{A}\right),\label{f_1}\\
f(0)=1>CA^{-q}\geq||\pa_y (c_{in})_{\neq}||_4.
\end{eqnarray}
and show that $||\pa_y c_{\neq}(t)||_4\leq f(t)$ for $t\leq T_{\star\star}$. The function f is estimated using \eqref{pax_cneq_L4_control} and the fact $q>1/2$ as follows:
\begin{align}
f(t)\lesssim&1+\frac{1}{A^{1/2}}+\int_0^t||u'\pa_x c_{\neq}(s)||_4+\frac{||n_{\neq}(s)||_4^2}{A}ds\nonumber\\
\lesssim& 1+A^{1/2-q}+\frac{t^{3/2}}{A^{1/2}}\sup_{0\leq s\leq t}||n_{\neq}(s)||_4+\frac{1}{A^{1/2}}\sup_{0\leq s\leq t}||n_{\neq}(s)||_4^2, \quad\forall t\leq A^{1/3+\ep}.\label{f_ctrl}
\end{align}
Next we show that $||\pa_y c_{\neq}||_4\leq f$ for $\forall t\in[0,T_{\star\star})$. Since $f$ is strictly increasing in time, $f\geq 1$. Assume that there exists a first time $t^\star\leq T_{\star\star}$ such that $||\pa_y c_{\neq}(t^\star)||_4^4$ is equal to the function $f^4(t^\star)$. At time $t^\star$, we have $||\pa_y c_{\neq}(t^\star)||_4=f(t^\star)\geq 1$, which yields the following relation
\begin{equation}||\pa_y c_{\neq}(t^\star)||_4^3\geq||\pa_y c_{\neq}(t^\star)||_4^2.
\end{equation}
Combining this with \eqref{pa_y_c_neq_4}, \eqref{f_1} yields that at time $t^\star$,
\begin{equation}
\frac{d}{dt}||\pa_y c_{\neq}||_4^4\bigg|_{t=t^\star}\leq \left(\frac{6||n_{\neq}||_4^2}{A}+4||u'\pa_x c_{\neq}||_4\right)||\pa_y c_{\neq}||_4^3\bigg|_{t=t^\star}<\frac{d}{dt}f^4\bigg|_{t=t^\star}.
\end{equation}
On the other hand, $\frac{d}{dt}||\pa_yc_{\neq}||_4^4\bigg|_{t=t^\star}\geq \frac{d}{dt}f^4\bigg|_{t=t^\star}$ at the first break-through time $t^\star$, which is a contradiction. As a result, we have that $||\pa_y c_{\neq}(t)||_4\leq f(t), \enskip\forall t\leq T_{\star\star}$, which together with \eqref{f_ctrl} yields the following estimate
\begin{equation}\label{paycneq_L4_ctrl}
||\pa_y c_{\neq}(t)||_4\lesssim 1+\frac{t^{3/2}}{A^{1/2}}\sup_{0\leq s\leq t}||n_{\neq}(s)||_4+\frac{1}{A^{1/2}}\sup_{0\leq s\leq t}||n_{\neq}(s)||_4^2,\quad \forall t\leq T_{\star\star}.
\end{equation}
Combining Lemma \ref{nasc0Lpestimate}, $||\na c_{\neq}||_4$ estimates \eqref{pax_cneq_L4_control} and \eqref{paycneq_L4_ctrl}, we  estimate the time evolution of $||n||_4^4$ as follows
\begin{align*}
\frac{d}{dt}||n||_4^4\lesssim&-\frac{3}{2A}||\na(n^2)||_2^2+\frac{||\na (n^2)||_2^{3/2}||n^2||_2^{1/2}(||\na c_{\neq}||_4+||\pay c_0||_4)}{A}\\
\lesssim&\frac{||n||_4^4}{A}\bigg(1+M^4+||\na(c_{in})_0||_4^4+\frac{t^2}{A^{2}}\sup_{0\leq s\leq t}||n_{\neq}(s)||_4^4+\frac{t^{6}}{A^{2}}\sup_{0\leq s\leq t}||n_{\neq}(s)||_4^4+\frac{1}{A^{2}}\sup_{0\leq s\leq t}||n_{\neq}(s)||_4^8\bigg).
\end{align*}
Thanks to the hypothesis \eqref{H4}, conservation of mass and H\"older inequality, we can take $A$ large enough such that the above estimate can be simplified as follows:
\begin{align*}
\frac{d}{dt}||n||_4^4\leq\frac{C||n||_4^4}{A}(M^4+1+||\pay (c_{in})_0||_4^4+A^{6\ep}\sup_{0\leq s\leq t}||n(s)||_4^4).
\end{align*}
Now we can compare the $||n||_4^4$ to the solution to the following differential equation:
\begin{align*}
\frac{d}{dt}f=\frac{2Cf}{A}(M^4+||\pay (c_{in})_0||_4^4+1+A^{6\ep}f),\quad f(0)>\max\{1, ||n_{in}||_4^4\}.
\end{align*}
The strictly increasing solution $f$ is bounded $f\leq C(n_{in})$ on the interval $[0,A^{1/3+\ep}]$ if $\ep$ is chosen small enough and $A$ is chosen large enough compared to $M, \enskip ||\pay(c_{in})_0||_4$ and $C$. Assume that there exists a first time $0<t_\star\leq A^{1/3+\ep}$ such that $||n(t_\star)||_4^4$ is equal to the function $f(t_\star)$. Since $f$ is strictly increasing, at the first break-through time $t_\star$, we have $||n(t_\star)||_4=\sup_{0\leq s\leq t_\star}||n(s)||_4$, which yields the following relation
\begin{equation}
\frac{d}{dt}||n||_4^4\bigg|_{t=t_{\star}}\leq\frac{C||n||_4^4}{A}(M^4+||\pay( c_{in})_0||_4^4+1+A^{6\ep}||n||_4^4)\bigg|_{t=t_\star}<\frac{d}{dt}f\bigg|_{t=t_\star}.
\end{equation}
On the other hand, $\frac{d}{dt}||n||_4^4\bigg|_{t=t_\star}\geq \frac{d}{dt}f\bigg|_{t=t_\star}$ at the first break-through time $t_\star>0$, which is a contradiction. As a result, we have that
\begin{equation}\label{n_4_bound_initial_layer}
||n(t)||_4\leq C_{n,L^4}^{in}(n_{in}), \quad\forall t\in [0, T_{\star\star}).
\end{equation}

Next we start the iteration process. Assume that $||n||_{p}$ is bounded, we estimate the $||n||_{2p}$ in terms of $||n||_p$. We start with estimating the $||\pa_x c_{\neq}||_{2p}^{2p}$. By calculating the time derivative, we see that
\begin{align*}
\frac{1}{2p}\frac{d}{dt}||\pa_x c_{\neq}||_{2p}^{2p}=&-\frac{2p-1}{Ap^2}||\na(\pa_xc_{\neq})^{p}||_2^2+\frac{2p-1}{Ap}||\na(\pa_x c_{\neq})^p||_2||(\pa_x c_{\neq})^{p-1}n_{\neq}||_2-\frac{1}{A}||\pa_x c_{\neq}||_{2p}^{2p}\\
\leq&-\frac{2p-1}{2Ap^2}||\na(\pa_xc_{\neq})^{p}||_2^2+\frac{p}{A}||\pa_x c_{\neq}||^{2p-2}_{2p}||n_{\neq}||_{2p}^2-\frac{1}{A}||\pa_x c_{\neq}||_{2p}^{2p}.
\end{align*}
As a result, we have that
\begin{equation}
\frac{d}{dt}||\pa_x c_{\neq}||_{2p}^2\leq \frac{2p}{A}||n_{\neq}||_{2p}^{2},
\end{equation}
which yields
\begin{equation}\label{pa_x_c_neq_2p}
||\pa_x c_{\neq}(t)||_{2p}\lesssim \sqrt{p}\sup_{0\leq s\leq t}||n_{\neq}(s)||_{2p}\frac{t^{1/2}}{A^{1/2}}+||\pa_x(c_{in})_{\neq}||_{2p},\quad \forall t\in [0,A^{1/3+\ep}].
\end{equation}
Next we estimate the time evolution of $||\pa_y c_{\neq}||_{2p}^{2p}$,
\begin{equation}
\frac{d}{dt}||\pa_y c_{\neq}||_{2p}^{2p}\leq 2p||u'\pa_x c_{\neq}||_{2p}||\pa_y c_{\neq}||_{2p}^{2p-1}+\frac{2p^2}{A}||\pa_y c_{\neq}||^{2p-2}_{2p}||n_{\neq}||_{2p}^2-\frac{2p}{A}||\pa_y c_{\neq}||_{2p}^{2p}.
\end{equation}
By comparing the solution with the following strictly increasing function $f$
\begin{equation}
\frac{d}{dt}f^{2p}= 4pf^{2p-1}\left(||u'\pa_x c_{\neq}||_{2p}+p\frac{||n_{\neq}||_{2p}^2}{A}+\frac{1}{A}\right),\quad f(0)=1>CA^{-q}\geq||\pa_y (c_{in})_{\neq}||_{2p},
\end{equation}
and applying a similar argument to prove \eqref{n_4_bound_initial_layer}, we have that
\begin{equation}\label{pa_y_c_neq_2p}
||\pa_y c_{\neq}(t)||_{2p}\leq f(t)\lesssim 1 + \frac{t^{3/2}}{A^{1/2}}\sqrt{p}\sup_{0\leq s\leq t}||n_{\neq}(s)||_{2p}+A^{-q+1/3+\ep}+p\frac{\sup_{0\leq s\leq t}||n_{\neq}(s)||_{2p}^2}{A^{1/2}}, \quad\forall t\in [0,T_{\star\star}].
\end{equation}

Next we estimate the time evolution of $||n||_{2p}^{2p}$. Applying the hypothesis, $||\na c_{\neq}||_{2p}$ estimates \eqref{pa_y_c_neq_2p}, \eqref{pa_x_c_neq_2p} and Lemma \ref{nasc0Lpestimate}, we have the following estimate by picking $A$ large
\begin{align*}
\frac{1}{2p}\frac{d}{dt}||n||_{2p}^{2p}=&-\frac{2p-1}{Ap^2}||\na(n^{p})||_2^2+\frac{2p-1}{Ap}||\na(n^p)||_2||n^p \na c||_2\\
\leq&-\frac{2p-1}{Ap^2}||\na(n^{p})||_2^2+\frac{2p-1}{Ap}||\na(n^p)||_2||n^p||_2^{1-1/p}||n||_\infty|| \na c||_{2p}\\
\leq&-\frac{2p-1}{2Ap^2}||\na(n^{p})||_2^2+\frac{Cp^3}{A}||n^p||_1^{\frac{2p-2}{p+1}}||n||_\infty^{\frac{4p}{p+1}}|| \na c||_{2p}^{\frac{4p}{p+1}}\\
\leq&\frac{Cp^7}{A}||n^p||_1^{\frac{2p-2}{p+1}}C_{2,\infty}^{4}\bigg(C(M, C_{n_0, L^2},\pa_y (c_{in})_0)+A^{3\ep/2}\sup_{0\leq s\leq t}||n_{\neq}(s)||_{2p}\bigg)^{4}
\end{align*}
Time integrating on both side of the estimate and applying the hypothesis \eqref{H4}, conservation of mass and H\"older inequality, we have
\begin{align}\label{n_2p_initial_layer}
\sup_{0\leq s\leq T_{\star\star}}||n(s)||_{2p}^{2p}\leq \frac{p^8}{A^{2/3-7\ep}}C(C_{2,\infty},\pa_y (c_{in})_0)\sup_{0\leq s\leq T_{\star\star}}||n(s)||_{p}^{2p(\frac{p-1}{p+1})}+||n_{in}||_{2p}^{2p}.
\end{align}

Finally, we use the \eqref{n_2p_initial_layer} together with \eqref{n_4_bound_initial_layer} to prove the $||n||_{L^\infty(0, T_{\star\star};L^\infty)}\leq  C_{n,\infty}^{in}$. Note that if for $\forall j\in \mathbb{N}$, $\sup_{0\leq s<T_{\star\star}}||n(s)||_{2^j}\leq 1$, we have that $\sup_{0\leq s<T_{\star\star}}||n||_\infty\leq 1$, and the result follows. Therefore, we define $4<p_\star=2^{j_\star}\in 2^\mathbb{Z}$ to be the first integer such that $\sup_{0\leq s<T_{\star\star}}||n||_{p_\star}\geq 1$. Note that for $p={p_\star/2}$, 
\begin{equation}\label{nLpstar/2}
||n||_{L^\infty_t(0, T_{\star\star};L_{x,y}^{p_\star/2})}\leq \max\{C_{n,L^4}^{in},1\}. 
\end{equation}In the following argument, we will only care about $p>p_\star$ since we want to find the limit of $||n||_{L^\infty_t(0, T_{\star\star};L_{x,y}^p)}$ as $p\rightarrow\infty$.

By the H\"{o}lder's inequality,
\begin{equation*}
1\leq \sup_{0\leq s\leq T_{\star\star}}||n(s)||_{p_*}\leq ||n(t)||_1^{\theta}\sup_{0\leq s\leq T_{\star\star}}||n(s)||_{p}^{1-\theta}, \quad \forall p>p_\star.
\end{equation*}
Combining this estimate with the conservation of mass, we can get a lower bound for $\sup_{0\leq s\leq T_{\star\star}}||n(s)||_{p}$
\begin{equation}
\sup_{0\leq s\leq T_{\star\star}}||n(s)||_{p}\geq (1+M)^{-\frac{\theta}{1-\theta}}\geq(1+M)^{-2/5}, \quad\forall p=2^j\in 2^{\mathbb{N}}, j> j_\star>2.
\end{equation}
Combining this with \eqref{n_2p_initial_layer}, we have that
\begin{eqnarray}
\sup_{0\leq s\leq T_{\star\star}}||n(s)||_{2p}^{2p}\leq \frac{p^8}{A^{2/3-7\ep}}C(C_{2,\infty})\sup_{0\leq s\leq T_{\star\star}}||n(s)||_{p}^{2p}+||n_{in}||_{2p}^{2p}.
\end{eqnarray}
Now we can pick the $A$ big such that
\begin{equation}
\sup_{0\leq s\leq T_{\star\star}}||n(s)||_{2p}^{2p}\leq p^8\sup_{0\leq s\leq T_{\star\star}}||n(s)||_{p}^{2p}+||n_{in}||_{2p}^{2p},\quad \forall p=2^j\geq p_\star,j\in \mathbb{N}.
\end{equation}
Now by the $L^4$ bound of $n$ \eqref{n_4_bound_initial_layer}, the $L^{p_\star/2}$ bound of $n$ \eqref{nLpstar/2} and the standard Moser-Alikakos iteration (\cite{Alikakos}),we have that
\begin{equation}\label{n_Linf_Linf_in}
\sup_{0\leq s\leq T_{\star\star}}||n(s)||_\infty\leq C_{n,\infty}^{in}(n_{in}).
\end{equation}

\textbf{Second step:} We prove the improvement to \eqref{Hyp_on_paxn}. First we estimate the time evolution of $||\pa_{xx}c_{\neq}||_{2p}^{2p}$:
\begin{align*}
\frac{1}{2p}\frac{d}{dt}||\pa_{xx}c_{\neq}||_{2p}^{2p}\leq&-\frac{2p-1}{2Ap^2}||\na(\pa_{xx}c_{\neq})^p||_2^2+\frac{p}{A}||\pa_x n||_{2p}^2||\pa_{xx}c_{\neq}||_{2p}^{2p-2}-\frac{1}{A}||\pa_{xx}c_{\neq}||_{2p}^{2p}.
\end{align*}
Here we use the fact that $\pa_x n=\pa_x n_{\neq}$. As a result, we see that
\begin{equation}\label{paxxcneq2p}
||\pa_{xx}c_{\neq}(t)||_{2p}\lesssim\sqrt{\frac{2pt}{A}}\sup_{0\leq s\leq t}||\pa_x n(s)||_{2p}+||\pa_x^2(c_{in})_{\neq}||_{2p},\quad \forall t\in [0,T_{\star\star}).
\end{equation}
By a similar argument as in the estimate of the term $||\pa_y c_{\neq}||_{2p}$ in \eqref{pa_y_c_neq_2p}, we have that
\begin{equation}\label{paxycneq2p}
|| \pa_{xy} c_{\neq}(t)||_{2p}\lesssim1+\frac{\sqrt{p}t^{3/2}}{A^{1/2}}\sup_{0\leq s\leq t}||\pa_x n(s)||_{2p}+p\frac{\sup_{0\leq s\leq t}||\pa_x n(s)||_{2p}^2}{A^{1/2}},\quad t\in [0,T_{\star\star}).
\end{equation}
Now we can calculate the time evolution of $||\pa_x n||_{2p}^{2p}$:
\begin{align}
\frac{1}{2p}\frac{d}{dt}||\pa_xn||_{2p}^{2p}\leq&-\frac{2p-1}{Ap^2}||\na(\pa_xn)^p||_2^2+\frac{2p-1}{Ap}\bigg(||\na (\pa_x n)^p||_2||\pa_x n||_{2p}^{p-1}||\pa_x\na c_{\neq}||_{2p}||n||_\infty\nonumber\\
+&||\na(\pa_x n)^p||_2||(\pa_x n)^p\na c||_{2}\bigg)\nonumber\\
=:&-\frac{2p-1}{Ap^2}||\na(\pa_x n)^p||_2^2+T_1+T_2.\label{nx infty two terms}
\end{align}
In the first line, we have used the fact that $\pa_x\na c=\pa_x\na c_{\neq}$. Now we need to separate the estimate into two cases, $p=1$ and $p\neq 1$. First we discuss the $p=1$ case. The $T_1$ term in \eqref{nx infty two terms} can be estimated using the $||\na \pa_x c_{\neq}||_{2p}$ estimates \eqref{paxxcneq2p} and \eqref{paxycneq2p} as follows:
\begin{align}
T_1\lesssim &\frac{1}{4A}||\na(\pa_xn)||_2^2+\frac{1}{A}||\pa_x\na c_{\neq}||_2^2||n||_\infty^2\nonumber\\
\lesssim&\frac{1}{4A}||\na(\pa_x n)||_2^2+\frac{1}{A}\bigg(\frac{2}{A^{2/3-\ep}}\sup_{0\leq s\leq t}||\pa_x n(s)||_2^2+1+A^{3\ep}\sup_{0\leq s\leq t}||\pa_x n(s)||_2^2+\frac{1}{A}\sup_{0\leq s\leq t}||\pa_x n(s)||_2^4\bigg)||n||_\infty^2\label{T_1_L2}
\end{align}
The $T_2$ in \eqref{nx infty two terms} can be estimated using $\na c_{\neq}$ $L^4$ estimates  \eqref{pax_cneq_L4_control}, \eqref{paycneq_L4_ctrl}, Lemma \ref{nasc0Lpestimate}, Gagliardo-Nirenberg-Sobolev inequality and H\"older inequality as follows:
\begin{align}
T_2\lesssim&\frac{1}{4A}||\na(\pa_x n)||_2^2+\frac{1}{A}||\pa_xn||_2^2||\na c||_4^4\nonumber\\
\lesssim&\frac{1}{4A}||\na(\pa_x n)||_2^2+\frac{B}{A}||\pa_xn||_2^2\bigg(1+M^4+||\pay (c_{in})_0||_4^4+\frac{t^2}{A^2}\sup_{0\leq s\leq t}||n(s)||_4^4\nonumber\\
&+\frac{t^6}{A^2}\sup_{0\leq s\leq t}||n(s)||_4^4+\frac{1}{A^2}\sup_{0\leq s\leq t}||n(s)||_4^8\bigg).\label{T_2_L2}
\end{align}
Now combining \eqref{n_Linf_Linf_in}, \eqref{nx infty two terms}, \eqref{T_1_L2}, \eqref{T_2_L2}, Lemma \ref{Lem:nasc0Lpestimate},  and, we obtain that
\begin{equation*}
\frac{d}{dt}||\pa_x n||_2^2\leq \frac{C(\CC,\pa_y(c_{in})_0)}{A^{1-6\ep}}(1+\sup_{0\leq s\leq t}||\pa_x n(s)||_2^4).
\end{equation*}
Now use a comparison argument similar to the one used to prove \eqref{n_4_bound_initial_layer}, we end up with the following estimate given $A$ chosen large enough
\begin{equation}\label{pa_x_n_2_initial_layer}
||\pa_x n(t)||_2\leq C_{\pa_xn, L^2}(n_{in}), \quad \forall t\in [0,T_{\star\star}).
\end{equation}
This finishes the treatment of the case $p=1$.

For the $p\neq 1$ case, there exists a large $B$ such that the $T_1$ term in \eqref{nx infty two terms} can be estimated as follows:
\begin{align*}
T_1\leq\frac{2p-1}{BAp^2}||\na(\pa_x n)^p||_2^2+\frac{BCp^3}{A}||(\pa_x n)^p||_1^{2(1-\frac{2}{p+1})}||\na\pa_x c_{\neq}||_{2p}^{\frac{4p}{p+1}}||n||_\infty^{\frac{4p}{p+1}},
\end{align*}
which combined with $\na \pa_x c_{\neq}$ $L^{2p}$ estimates \eqref{paxxcneq2p}, \eqref{paxycneq2p}, hypothesis \eqref{Hyp_on_paxn} and $L^2$ estimate of $\pa_x n$ in the initial time layer \eqref{pa_x_n_2_initial_layer} yields
\begin{align}\label{T_1}
T_1\leq \frac{2p-1}{BAp^2}||\na(\pa_xn)^p||_2^2+\frac{Bp^7}{A^{1-6\ep}}||(\pa_x n)^p||_1^{2(1-\frac{2}{p+1})}C(C_{2,\infty}, C_{\pa_x n,\infty}, n_{in}).
\end{align}
For the $T_2$ in \eqref{nx infty two terms}, we can estimate it using Lemma \eqref{nasc0Lpestimate}, $L^\infty$ estimate of $n$ \eqref{n_Linf_Linf_in} and $\na c_{\neq}$ $L^{2p}$ estimates \eqref{pa_x_c_neq_2p}, \eqref{pa_y_c_neq_2p} as follows:
\begin{align}
T_2\leq&\frac{2p-1}{Ap}||\na (\pa_x n)^p||_2||(\pa_x n)^p||_{16/7}||\na c||_{16}\nonumber\\
\leq&\frac{2p-1}{BAp^2}||\na(\pa_x n)^p||_2^2+\frac{BCp^4}{A}||(\pa_x n)^p||_1^2||\na c||_{16}^{32/7}\nonumber\\
\leq&\frac{2p-1}{BAp^2}||\na(\pa_x n)^p||_2^2+\frac{Bp^4}{A^{1-7\ep}}||\pa_x n||_p^{2p}C(\CC).\label{T_2}
\end{align}
Combining \eqref{nx infty two terms}, \eqref{T_1} and \eqref{T_2} and integrating in time, we have that
\begin{align}
\frac{1}{2p}||\pa_xn(t)||_{2p}^{2p}\leq&
\frac{1}{2p}||\pa_xn_{in}||_{2p}^{2p}+ \frac{Bp^7t}{A^{1-6\ep}}\sup_{0\leq s\leq t}||\pa_x n(s)||_p^{2p(1-\frac{2}{p+1})}C(C_{2,\infty}, C_{\pa_x n,\infty}, n_{in})\nonumber\\
&+\frac{Bp^4t}{A^{1-7\ep}}\sup_{0\leq s\leq t}||\pa_x n(s)||_p^{2p}C(C_{2,\infty}),\quad \forall t\in [0, T_{\star\star}].\label{pax_n_2p_initial_layer}
\end{align}

Finally, we use the \eqref{pax_n_2p_initial_layer} together with \eqref{pa_x_n_2_initial_layer} to get the $||\pa_xn||_{L^\infty_t(0, T_{\star\star};L_{x,y}^\infty)}\leq 2 C_{\pa_xn,\infty}$. Note that if for $\forall j\in \mathbb{N}$, $\sup_{0\leq s<T_{\star\star}}||\pa_xn(s)||_{2^j}\leq 1$, we have that $\sup_{0\leq s<T_{\star\star}}||\pa_xn(s)||_\infty\leq 1$, and the result follows. Therefore, we assume that there exists $4\leq p_\star=2^{j_\star}\in 2^\mathbb{N}$ such that it is the first integer that $\sup_{0\leq s<T_{\star\star}}||\pa_xn||_{p_\star}\geq 1$. For $p=p_\star/2$,
\begin{equation}\label{paxnLpstar/2}
||\pa_x n||_{L_t^\infty(0,T_{\star\star};L^{p_\star/2}_{x,y})}\leq \max\{C_{\pa_x n,L^2},1\}.
\end{equation}
We will only care about $p=2^j>p_\star$, $j\in \mathbb{N}$. By the H\"{o}lder's inequality,
\begin{equation*}
1\leq \sup_{0\leq s\leq T_{\star\star}}||\pa_xn(s)||_{p_*}\leq \sup_{0\leq s\leq T_{\star\star}}||\pa_xn(s)||_2^{\theta}\sup_{0\leq s\leq T_{\star\star}}||\pa_xn(s)||_{p}^{1-\theta}, \quad p> p_\star, p\in 2^{\mathbb{N}}.
\end{equation*}
Now combining this with \eqref{pa_x_n_2_initial_layer}, we have a lower bound for $\sup_{0\leq s\leq T_{\star\star}}||\pa_x n(s)||_{p}$:
\begin{equation}
\sup_{0\leq s\leq T_{\star\star}}||\pa_xn(s)||_{p}\geq (1+C_{\pa_x n,L^2})^{-3}, \quad\forall p\geq p_\star,\enskip p\in 2^{\mathbb{N}}.
\end{equation}
Now combining this with \eqref{pax_n_2p_initial_layer}, we have that
\begin{align}\sup_{0\leq s\leq T_{\star\star}}||\pa_xn(s)||_{2p}^{2p}\leq&
||\pa_xn_{in}||_{2p}^{2p}+\frac{Bp^8}{A^{2/3-8\ep}}\sup_{0\leq s\leq T_{\star\star}}||\pa_x n(s)||_p^{2p}C(C_{2,\infty}, C_{\pa_x n,\infty}, n_{in}).
\end{align}
Now we can take the $A$ large such that
\begin{align}\sup_{0\leq s\leq T_{\star\star}}||\pa_xn(s)||_{2p}^{2p}\leq&
||\pa_xn_{in}||_{2p}^{2p}+p^8\sup_{0\leq s\leq T_{\star\star}}||\pa_x n(s)||_p^{2p}.
\end{align}
Combining $L^2$ estimate of $\pa_x n$ \eqref{pa_x_n_2_initial_layer}, $L^{p_\star/2}$ estimate of $\pa_x n$ \eqref{paxnLpstar/2} and the standard Moser-Alikakos iteration yields
\begin{equation}\label{paxn_Linf_in_con}
\sup_{0\leq s\leq T_{\star\star}}||\pa_x n(s)||_{\infty}\leq C(n_{in}).
\end{equation}
Now by picking $2C_{\pa_xn,\infty}\geq C(n_{in})$, we finishes the proof of the improvement to \eqref{Hyp_on_paxn}.

\textbf{Third step:} We prove the \eqref{ctrl:na_c_Linf_in}. First we calculate the time evolution of $||\pa_x c_{\neq}||_{2p}$ using \eqref{pa_x_n_2_initial_layer} and \eqref{paxn_Linf_in_con}:
\begin{align*}
\frac{1}{2p}\frac{d}{dt}||\pa_x c_{\neq}||_{2p}^{2p}\leq \frac{1}{A}||\pa_x n||_{2p}||\pa_x c_{\neq}||_{2p}^{2p-1}\leq \frac{1}{A}(C_{\pa_xn,2}(n_{in})+C_{\pa_xn,\infty}(n_{in}))||\pa_x c_{\neq}||_{2p}^{2p-1}.
\end{align*}
This implies that
\begin{equation}\label{paxcneq_L2p_in}
||\pa_x c_{\neq}(t)||_{2p}\leq \frac{t}{A}(C_{\pa_xn,2}+C_{\pa_xn,\infty})+||\pa_x(c_{in})_{\neq}||_{2p}, \quad\forall p\in[2,\infty).
\end{equation}
Therefore, by the assumption that $||\na (c_{in})_{\neq}||_{H^1\cap W^{1,\infty}}\leq CA^{-q},\enskip q>1/2$, we have that
\begin{equation}
||\pa_x c_{\neq}(t)||_\infty\leq \frac{t}{A}(C_{\pa_xn,2}+C_{\pa_xn,\infty})+CA^{-q}.
\end{equation}
For $t\leq T_{\star\star}\leq A^{1/3+\ep}$, we have the following estimate for $A$ chosen large enough
\begin{equation}\label{paxcneq_Linf_in}
||\pa_x c_{\neq}(t)||_\infty\leq 1,\quad \forall t\in[0,T_{\star\star}).
\end{equation}

In order to estimate the norm $||\pa_y c_{\neq}||_{2p}$, we need to introduce a time weighted  norm. To define it, we first consider the following simpler equation only taking into account the strong shear flow destabilizing effect
\begin{equation*}
\frac{d}{dt}f=-u'(y)\pa_x c_{\neq}-u(y)\pa_x f,\quad f_{in}=\pay (c_{in})_{\neq}.
\end{equation*}
We can estimate the time evolution of the $L^{2p}$ norm of the solution using \eqref{paxcneq_L2p_in} as follows:
\begin{equation}
\frac{1}{2p}\frac{d}{dt}||f||_{2p}^{2p}\leq ||u'\pa_x c_{\neq}||_{2p}||f||_{2p}^{2p-1}\leq \left(\frac{t}{A}(C_{\pa_xn,2}+C_{\pa_xn,\infty})+CA^{-q}\right)||u'||_\infty||f||_{2p}^{2p-1}.
\end{equation}
Time integration yields
\begin{equation}
||f(t)||_{2p}\leq \frac{t^2}{A}||u'||_\infty(C_{\pa_xn,2}+C_{\pa_xn,\infty})+C||u'||_\infty A^{-q}t+CA^{-q}=:G_\infty(t), 0\leq t<T_{\star\star},\forall p\geq 2.
\end{equation}
Note the following relation:
\begin{equation}
G_\infty'(t)=2\frac{t}{A}(C_{\pa_xn,2}+C_{\pa_xn,\infty})||u'||_\infty+C||u'||_\infty A^{-q}\geq ||u'\pa_x c_{\neq}||_{2p}.
\end{equation}
Next we consider the following time weighted norm:
\begin{equation}
\mathcal{F}_{p}^{1/p}(t):=\frac{||\pa_y c_{\neq}(t)||_{p}}{e^{G_\infty(t)}}.
\end{equation}
Since $G_\infty$ is bounded by a universal constant if we choose $A$ large enough, the norm $\mathcal{F}_{p}^{1/p}$ is equivalent to the $L^p$ norm. However, the quantity $\mathcal{F}_{p}$ has better property than the usual $L^{p}$ norm. When we take the time derivative of $\mathcal{F}_{p}$, the weight $\frac{1}{e^{pG_\infty(t)}}$ will contribute extra negative term to compensate for the strong shear flow destabilizing effect.  

The time derivative of the $\mathcal{F}_{2p}$ can be estimated with the $L^\infty$ bound of $n$ in the initial time layer \eqref{n_Linf_Linf_in} and Gagliardo-Nirenberg-Sobolev inequality as follows
\begin{align*}
\frac{d}{dt}\mathcal{F}_{2p}\leq\frac{2p}{e^{G_\infty 2p}}\bigg(&-\frac{2p-1}{Cp^2A}\frac{||\pa_y c_{\neq}||_{2p}^{4p}}{||\pa_y c_{\neq}||_{p}^{2p}}+\frac{2p-1}{A}||\pa_y c_{\neq}||_{2p}^{2p-2} (M+C^{in}_{n,\infty})^2\\
&+||\pa_y c_{\neq}||_{2p}^{2p-1}||u'\pa_x c_{\neq}||_{2p}-G_\infty'(t)||\pa_y c_{\neq}||_{2p}^{2p}\bigg).
\end{align*}
If $\sup_{0\leq s\leq T_{\star\star}}\frac{||\pa_y c_{\neq}(s)||_{2p}}{e^{G_\infty(s)}}\leq 1$, we have
\begin{equation}\label{case_1}
\mathcal{F}_{2p}(t)\leq 1,\quad \forall t\in [0, T_{\star\star}].
\end{equation}
Otherwise if $\sup_{0\leq s\leq  T_{\star\star}}\frac{||\pa_y c_{\neq}(s)||_{2p}}{e^{G_\infty(s)}} \geq 1$, we have that at the maximum point $t_\star$ of $\mathcal{F}_{2p}$, $||\pa_y c_{\neq}(t_\star)||_{2p}\geq 1$ and the following holds:
\begin{align*}
\frac{d}{dt}\mathcal{F}_{2p}\bigg|_{t=t_\star}\leq\frac{2p}{e^{G_\infty (t_\star)2p}}\bigg(-\frac{2p-1}{CAp^2}\frac{||\pa_y c_{\neq}(t_\star)||_{2p}^{2p}/e^{G_{\infty}(t_\star) 2p}}{(||\pa_y c_{\neq}(t_\star)||_{p}^{p}/e^{G_\infty(t_\star) p})^2}+\frac{2p-1}{A}(M+C^{in}_{n,\infty})^2\bigg)||(\pa_y c_{\neq})^p(t_\star)||_2^2.
\end{align*}
Now we have that
\begin{equation}\label{case_2}
\sup_{0\leq s\leq T_{\star\star}}F_{2p}(s)\leq C(M, C_{n,\infty}^{in})p^2 \sup_{0\leq s\leq T_{\star\star}}F_p(s)^2+\frac{||\pay (c_{in})_{\neq}||_{2p}^{2p}}{e^{2pCA^{-q}}}.
\end{equation}
Combining \eqref{case_1} and \eqref{case_2}, and noting that $C_{n,\infty}^{in}$ only depends on $n_{in}$ \eqref{n_Linf_Linf_in}, we have
\begin{equation}
\sup_{0\leq s\leq T_{\star\star}} F_{2p}(s)\leq \max\{C(M, n_{in})p^2\sup_{0\leq s\leq T_{\star\star}} F_p^2(s),1\}
\end{equation}
for $A$ large enough. Combining this with the fact that $||\pa_y c_{\neq}||_2\leq \sqrt{C_{ED}}(||n_{in}||_{H^1}+1)<\infty$ from the hypothesis \eqref{H2} and using similar Moser-Alikakos iteration argument as before, we end up with
\begin{equation}
\sup_{0\leq s\leq  T_{\star\star}}||\pa_y c_{\neq}(s)||_\infty\leq C(M, C_{ED},n_{in}).
\end{equation}
Combining this with \eqref{paxcneq_Linf_in}, we have proven that
\begin{equation}
||\na c_{\neq}(t)||_\infty\leq C^{in}_{\na c_{\neq},\infty}(M, C_{ED},n_{in}), \quad\forall t\in [0,T_{\star\star}).
\end{equation}
Now since we have proven the bootstrap conclusion \eqref{paxn_Linf_in_con}, $T_{\star\star}$ can be extended all the way to $A^{1/3+\ep},\ep<\frac{1}{12}$. Therefore all the estimates we got above can be extended to $[0, A^{1/3+\ep}]$. This completes the proof of the lemma.
\end{proof}
\subsection{Long time estimate}
In this subsection, we prove \eqref{ctrl:n_Linf} and \eqref{ctrl:na_c_Linf} in the time interval $[A^{1/3+\ep}, T_\star)$. The battle plan is as follows:
\begin{align*}
\text{Hypothesis \eqref{H2}}\overbrace{\Rightarrow}^{\text{$L^4$ energy estimate}}||\na c_{\neq}(t)||_4, ||n_{\neq}(t)||_4 \text{ estimates}\overbrace{\Rightarrow}^{\text{Moser-Alikakos iteration }}||\na c_{\neq}||_\infty,||n_{\neq}||_\infty\text{ estimates}.
\end{align*}

\textbf{First step:} We estimate the $||\na c_{\neq}||_4$. First we need to get an estimate of the norm at the starting time $t_0:=A^{1/3+\ep}$. By standard energy estimate combining with \eqref{H2}, \eqref{paxcneq_L2p_in}, \eqref{ctrl:n_Linf_in} and \eqref{ctrl:na_c_Linf_in}, we have that
\begin{eqnarray}\label{A1/3L4data}
||\na c_{\neq}(t_0)||_4^4\leq 2,
\end{eqnarray}
if $A$ is large enough.

For $t\geq A^{1/3+\ep}$, applying the estimate \eqref{dt_pax_c_L4} and the bootstrap hypothesis \eqref{H2},\eqref{H4} and \eqref{H5} and the H\"{o}lder's inequality, we have that
\begin{align*}
\frac{d}{dt}||\pa_x c_{\neq}||_4^4\lesssim \frac{1}{A}||\pa_x c_{\neq}||_4^2||n_{\neq}||_4^2\lesssim \frac{1}{A}||\pa_x c_{\neq}||_2||n_{\neq}||_2||\pa_x c_{\neq}||_\infty||n_{\neq}||_\infty\lesssim \frac{1}{A}e^{-\eta\frac{t}{A^{1/3}}}C_{ED}(||n_{in}||_{H^1}^2+1)C_{2,\infty}^2.
\end{align*}
Time integrating the above inequality and combining it with \eqref{A1/3L4data}, we obtain the following estimate by taking $A$ large:
\begin{equation}\label{pax_c_L4_long_time}
||\pa_x c_{\neq}(t)||_4^4\leq 4,\quad \forall t\in[A^{1/3+\ep}, T_\star).
\end{equation}
Applying the time evolution estimate of $||\pay c_{\neq}||_4^4$ \eqref{pa_y_c_neq_4}, the fact that $||u'||_\infty\leq C$, the bootstrap hypothesis \eqref{H2},\eqref{H4} and \eqref{H5} and the H\"{o}lder's inequality, we can estimate the time evolution of $||\pa_y c_{\neq}||_4^4$ as follows:
\begin{align*}
\frac{d}{dt}||\pa_y c_{\neq}||_4^4\lesssim &\frac{1}{A}||\pa_y c_{\neq}||_4^2||n_{\neq}||_4^2+\frac{1}{A}||\pa_y c_{\neq}||_4^3(||u'\pa_x c_{\neq}||_4A)\\
\lesssim &\frac{1}{A}||\pa_y c_{\neq}||_2||\pa_y c_{\neq}||_\infty||n_{\neq}||_2||n_{\neq}||_\infty+\frac{1}{A}||\pa_y c_{\neq}||_2^{3/2}||\pa_yc_{\neq}||_\infty^{3/2}(||\pa_x c_{\neq}||_\infty^{1/2}||\pa_x c_{\neq}||_2^{1/2}A)\\
\lesssim& \frac{1}{A}e^{-\eta\frac{t}{A^{1/3}}}C_{ED}(||n_{in}||_{H^1}^2+1)C_{2,\infty}^2+\frac{1}{A}C_{ED}(||n_{in}||_{H^1}^2+1)e^{-\eta\frac{3t}{4A^{1/3}}}C_{2,\infty}^{2}\left(e^{-\frac{\eta}{4}A^\ep}A\right).
\end{align*}
Note that $e^{-\frac{\eta A^{\ep}}{4}}A\leq C(\ep,\eta)$. Now integrate this in time and use the initial condition \eqref{A1/3L4data}, we have the following estimate by picking $A$ large:
\begin{equation}\label{pay_c_L4_long_time}
||\pa_y c_{\neq}(t)||_4^4\leq 4,\quad \forall t\in[A^{1/3+\ep}, T_\star).
\end{equation}
This concludes the first step.


\textbf{Second Step:} We estimate the time evolution of $||n||_4^4$:
\begin{align*}
\frac{d}{dt}\int|n|^4dxdy=&4\int n^3\bigg(\frac{\de n-\na\cdot(\na c n)}{A}\bigg)dxdy\\
\lesssim&-\int\frac{|\na(n)^2|^2}{A}dxdy+\frac{||n^2||_4^2||\na c||_4^2}{A}.
\end{align*}
Applying the following Gagliardo-Nirenberg-Sobolev inequalities
\begin{align*}
||n^2||_2\lesssim||\na(n)^2||_2^{1/2}||n^2||_1^{1/2},\\
||n^2||_4\lesssim||\na(n^2)||_2^{1/2}||n^2||_2^{1/2}
\end{align*}
and Young's inequality in the above differential inequality yields 
\begin{align*}
\frac{d}{dt}\int|n|^4dxdy
\lesssim-\frac{||n^2||_2^4}{CA||n^2||_1^{2}}+\frac{||n^2||_2^2||\na c||_4^4}{A}
\end{align*}
Combining this with the $L^4$ estimates of $\na c_{\neq}$ \eqref{pax_c_L4_long_time}, \eqref{pay_c_L4_long_time}, initial time estimate \eqref{ctrl_in} Lemma \ref{nasc0Lpestimate} and hypothesis \eqref{H2}, we have
\begin{align}\label{n_L4_long_time}
\sup_{t_0\leq t\leq T_\star}||n(t)||_4^4\lesssim ||n(t_0)||_4^4+\sup_{t_0\leq t\leq T_\star}||n^2(t)||_1^2\sup_{t_0\leq t\leq T_\star}||\na c(t)||_4^{4}\leq \left(C_{n,L^4}^{long}(n_{in},\pay( c_{in})_0,C_{n_0,L^2},C_{ED},M)\right)^4.
\end{align}
Since $||n_0||_{L^4(\Torus\times\rr)}\leq ||n||_{L^4(\Torus\times\rr)}$, we have $||n_{\neq}||_4\leq 2||n||_4\leq 2C^{long}_{n,L^4}$.

\textbf{Third Step:} Now we can start to do the Moser-Alikakos iteration on $||\pa_y c_{\neq}||_{2p},\enskip p\in 2^{\mathbb{N}}$ to get $||\pa_y c_{\neq}||_\infty$ bound on $[A^{1/3+\ep},T_\star]$. The time evolution of $||\pay c_{\neq}||_{2p}^{2p}$ can be estimated as follows
\begin{align}
\frac{d}{dt}\int|\pa_y c_{\neq}|^{2p}dxdy=&2p\int(\pa_yc_{\neq})^{2p-1}\left(\frac{\de \pa_y c_{\neq}+\pa_y n_{\neq}}{A}-u'(y)\pa_x c_{\neq}\right)dxdy\nonumber\\
\leq&-\left(2-\frac{2}{p}\right)\frac{1}{A}\int|\na(\pa_yc_{\neq})^p|^2dxdy+\frac{2p^2}{A}||\pa_y c_{\neq}||_{4p-4}^{2p-2}||n_{\neq}||_4^2+\frac{2p}{A}||\pa_y c_{\neq}||_{4p-2}^{2p-1}||u'\pa_x c_{\neq}||_2A.\label{pay_c_neq_L2p_longtime}
\end{align}
Note that from hypothesis \eqref{H2}, for $A$ chosen large enough, we have that
\begin{align}\label{drift control 2}
||u'\pa_x c_{\neq}||_2A\leq \sqrt{C_{ED}}(||n_{in}||_{H^1}+1)e^{-\eta t/(2A^{1/3})}||u'||_\infty A\leq C(\eta,\epsilon), \quad t\geq A^{1/3+\ep}.
\end{align}
We also have the following type of H\"{o}lder's inequality:
\begin{align*}
||\pa_y c_{\neq}||_{4p-4}\leq||\pa_y c_{\neq}||_{4p}^{\frac{1-\frac{1}{2p-2}}{1-{1}/(2p)}}||\pa_y c_{\neq}||_2^{1-\frac{1-\frac{1}{2p-2}}{1-1/(2p)}};\\
||\pa_y c_{\neq}||_{4p-2}\leq||\pa_y c_{\neq}||_{4p}^{\frac{1-\frac{1}{2p-1}}{1-{1}/(2p)}}||\pa_y c_{\neq}||_2^{1-\frac{1-\frac{1}{2p-1}}{1-1/(2p)}}.\\
\end{align*}
Applying all these estimates together with Gagliardo-Nirenbery-Sobolev inequality and hypothesis \eqref{H2} in the above differential inequality \eqref{pay_c_neq_L2p_longtime}, we have that
\begin{align*}
\frac{d}{dt}&\int|\pa_y c_{\neq}|^{2p}dxdy\\
\lesssim& -\frac{||\na(\pa_y c_{\neq})^p||_2^2}{A}+\frac{p^2}{A}||\pa_y c_{\neq}||_{4p}^{\frac{2p-3}{1-1/(2p)}}||\pa_y c_{\neq}||_2^{2p-2-\frac{2p-3}{1-1/(2p)}}||n_{\neq}||_4^2+\frac{p}{A}||\pa_y c_{\neq}||_{4p}^{\frac{2p-2}{1-1/(2p)}}||\pa_y c_{\neq}||_2^{2p-1-\frac{2p-2}{1-1/(2p)}}C\\
\lesssim& -\frac{||\na(\pa_y c_{\neq})^p||_2^2}{A}+\frac{p^2}{A}(||\na(\pa_y c_{\neq})^p||_{2}||(\pa_y c_{\neq})^p||_2)^{\frac{2p-3}{2p-1}}||\pa_y c_{\neq}||_2^{2p-2-\frac{2p-3}{1-1/(2p)}}||n_{\neq}||_4^2\\
&+\frac{p}{A}(||\na(\pa_y c_{\neq})^p||_2||(\pa_y c_{\neq})^p||_2)^{\frac{2p-2}{2p-1}}||\pa_y c_{\neq}||_2^{2p-1-\frac{2p-2}{1-1/(2p)}}C\\
\lesssim&-\frac{||\na(\pa_y c_{\neq})^p||_2^2}{2A}+ \frac{p^4||(\pa_y c_{\neq})^p||_2^{\frac{2(2p-3)}{2p+1}}}{A}\left(\sqrt{ C_{ED}}||n_{in}||_{H^1}+1\right)^2\left(C_{n,L^4}^{long}\right)^4\\
&+\frac{p^2||(\pa_y c_{\neq})^p||_2^{\frac{(2p-2)}{p}}}{A}\left(\sqrt{C_{ED}}||n_{in}||_{H^1}+1\right)^2C^2.
\end{align*}
Now use the following Nash inequality
\begin{align*}
||f||_2\lesssim||f||_1^{1/2}||\na f||_2^{1/2}
\end{align*}
we have that
\begin{align*}
\frac{d}{dt}\int|\pa_y c_{\neq}|^{2p}dxdy\lesssim-\frac{||(\pa_y c_{\neq})^p||_2^{4}}{AC||(\pa_y c_{\neq})^p||_1^{2}}+\frac{p^4}{A}\bigg(||(\pa_y c_{\neq})^p||_2^{\frac{2(2p-3)}{2p+1}}+||(\pa_y c_{\neq})^p||_2^{\frac{2p-2}{p}}\bigg)C(C_{ED}, C_{n_0,L^2}, n_{in},\pay (c_{in})_0).
\end{align*}
This estimate leads to the following estimate
\begin{align*}
\sup_{t_0\leq t\leq T_\star}||(\pa_y c_{\neq})^p(t)||_2^2\leq \max\left\{p^4C(C_{ED},C_{n_0,L^2}, n_{in},\pay (c_{in})_0)\sup_{t_0\leq t\leq T_\star}||(\pa_y c_{\neq})^p||_1^2,1,||(\pa_y c_{\neq})^p(t_0)||_2^2\right\}.
\end{align*}
Now recalling hypothesis \eqref{H2} and the bound \eqref{ctrl:na_c_Linf_in}, we use the Moser-Alikakos iteration to get the bound
\begin{equation}\label{nay_c_neq_infty_long_time}
||\pa_y c_{\neq}||_\infty\leq C(n_{in},C_{ED}, C_{n_0,L^2},\pay (c_{in})_0) ,\quad\forall t\in [t_0,T_\star].
\end{equation}

The proof of $||\pa_x c_{\neq}||_{\infty}$ is similar but easier, so we omit the proof for the sake of brevity. As a result, we obtain
\begin{equation}
||\na c_{\neq}(t)||_\infty\leq  C_{\na c_{\neq},\infty}^{long}(n_{in},C_{ED}, C_{n_0,L^2},\pay (c_{in})_0) ,\quad \forall t\in [t_0,T_\star].
\end{equation}
Now applying the Moser-Alikakos iteration and \eqref{ctrl:n_Linf_in}, we have that
\begin{equation}
||n(t)||_\infty\leq C_{n,\infty}^{long}(C_{ED},n_{in}, C_{n_0,L^2},\pa_y c_{in}),\quad\forall t\in[t_0,T_\star].
\end{equation}
By picking the $C_{n,\infty}\gg \max\{C_{n,\infty}^{in},C_{n,\infty}^{long}\}$ in \eqref{H4}, we prove \eqref{ctrl:n_Linf}.
By picking the $C_{\na c_{\neq},\infty}\gg \max\{C_{\na c_{\neq},\infty}^{in},C_{\na c_{\neq},\infty}^{long}\}$ in \eqref{H5}, we prove \eqref{ctrl:na_c_Linf}.
This completes the proof of Proposition \ref{prop:boot2D}.

\appendix
\section{Appendix}

\begin{lem}\label{Lem:nasc0Lpestimate}Consider the solution to \eqref{zeromode} subject to initial data $(c_{in})_0$. For $\forall s\in \mathbb{N}$ and any $(p,q)$ pair such that either $2\leq p<\infty, 1\leq q\leq p$ or $p=\infty,1< q\leq p$ is satisfied, the following estimates hold for the solution $c_0$
\bel\label{nasc0Lpestimate}\ba
||\pay c_0(t)||_p\lesssim_{p,q} &\sup_{0\leq \tau\leq t}||n_0(\tau)||_q+||(\pay c_{in})_0||_p;\\
|| \pay^{s+1} c_0(t)||_p\lesssim_{p,q}&\sup_{0\leq \tau\leq t}||\pay^s n_0(\tau)||_q+||(\pay^{s+1}c_{in})_0||_p,\quad 2\leq p\leq \infty.
\ea\eel
\end{lem}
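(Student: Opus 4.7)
The plan is to use Duhamel's formula for the heat-type equation \eqref{zeromode} and heat-kernel smoothing, so this is essentially a parabolic regularity estimate for the (1D in $y$) linear equation $\pa_t c_0=\frac{1}{A}\pa_{yy}c_0-\frac{1}{A}c_0+\frac{1}{A}n_0$. Since $n_0\ge 0$ in the PKS system, there are no sign issues to worry about.

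First I would rewrite $c_0$ via Duhamel, absorbing the zeroth-order term $-c_0/A$ as an exponential factor:
\begin{equation*}
c_0(t,y)=e^{-t/A}\,e^{(t/A)\pa_{yy}}(c_{in})_0(y)+\frac{1}{A}\int_0^t e^{-(t-s)/A}\,e^{((t-s)/A)\pa_{yy}}n_0(s,y)\,ds.
\end{equation*}
Since $\pa_y$ commutes with the heat semigroup, for the first bound I differentiate and push the $\pa_y$ onto the kernel in the Duhamel integral, obtaining
\begin{equation*}
\pa_y c_0(t)=e^{-t/A}\,e^{(t/A)\pa_{yy}}(\pa_y c_{in})_0+\frac{1}{A}\int_0^t e^{-(t-s)/A}\,\pa_y e^{((t-s)/A)\pa_{yy}}n_0(s)\,ds.
\end{equation*}
For the higher derivative bound I instead place $s$ derivatives on $n_0$ and leave one derivative on the kernel, arriving at an analogous expression with $\pa_y^{s+1}(c_{in})_0$ and $\pa_y^s n_0(s)$.

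Next I apply the standard 1D heat-kernel estimate: with $G_\tau$ the heat kernel at time $\tau/A$,
\begin{equation*}
\norm{\pa_y G_\tau}_{L^r(\R)}\lesssim \left(\tau/A\right)^{-\frac12-\frac12(1-1/r)},
\end{equation*}
combined with Young's convolution inequality $L^q\ast L^r\to L^p$ where $\frac{1}{r}=1+\frac{1}{p}-\frac{1}{q}$. Since $e^{(t/A)\pa_{yy}}$ is an $L^p$-contraction, the boundary term is bounded by $\norm{(\pa_y c_{in})_0}_{L^p}$. For the Duhamel integral I get
\begin{equation*}
\norm{\pa_y c_0(t)}_{L^p}\lesssim \norm{(\pa_y c_{in})_0}_{L^p}+\sup_{0\le\tau\le t}\norm{n_0(\tau)}_{L^q}\int_0^{t/A}e^{-u}\,u^{-\frac12\left(1+\frac1q-\frac1p\right)}\,du,
\end{equation*}
after the substitution $u=(t-s)/A$ (the $1/A$ prefactor from Duhamel is absorbed into $du$). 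The exponent on $u$ is $>-1$ precisely when $\frac{1}{q}-\frac{1}{p}<1$, which is guaranteed by the two allowed cases: either $2\le p<\infty$ and $1\le q\le p$ (so $\frac1q-\frac1p\le 1-\frac1p<1$), or $p=\infty$ and $1<q$ (so $\frac1q<1$). The exponential $e^{-u}$ handles decay at infinity, so the integral is a finite constant depending only on $p,q$, yielding the first claim. The higher-order estimate is identical once $\pa_y^s$ is moved onto $n_0$.

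The only real obstacle is verifying the exponent condition at $u=0$, which is why the case $p=\infty$ excludes $q=1$ (there the integrand behaves like $u^{-1}$ and fails to integrate). Everything else is bookkeeping.
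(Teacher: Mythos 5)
Your proof is correct and follows essentially the same route as the paper: apply Duhamel's formula to the linear heat equation for $c_0$ with damping $-c_0/A$ and source $n_0/A$, commute the derivative onto the heat kernel, and then combine Young's convolution inequality ($L^q * L^r \to L^p$ with $\frac{1}{r}=1+\frac{1}{p}-\frac{1}{q}$) with the standard $L^r$ decay of $\partial_y G_\tau$ to reduce everything to the time integrability of $u^{-\frac12(1+\frac1q-\frac1p)}e^{-u}$. Your exponent bookkeeping matches the paper's (the paper's $(\tau/A)^{-(1-\frac{1}{2r})}$ is exactly your $u^{-\frac12(1+\frac1q-\frac1p)}$), and your explanation of why $q=1$ is excluded when $p=\infty$ (the kernel then lands in $L^\infty$ with rate $\tau^{-1}$) is the right reason; the paper encodes the same restriction by requiring $1\le r<\infty$.
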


\begin{proof}
For $\forall 2\leq p\leq \infty$, using the heat mild solution representation, Minkowski's integration inequality and Young's inequality, we have the following
\begin{align*}
||\pa_y c_0||_p\leq&\norm{\int_0^t \int_\rr e^{\frac{s-t}{A}}\frac{-(x-y)}{2(t-s)A^{-1}}\frac{1}{\sqrt{4\pi (t-s)A^{-1}}}e^{\frac{-|x-y|^2}{4(t-s)A^{-1}}}\frac{n_0}{A}
(y,s)dyds}_p+||e^{-\frac{t}{A}}e^{t\frac{\de}{A}}(\pay c_{in})_0||_p\\
\lesssim&\int_0^t \frac{e^{\frac{s-t}{A}}}{2A^{-1}(t-s)}\norm{\int_\rr \frac{|x-y|}{\sqrt{4\pi A^{-1}(t-s)}}e^{\frac{-|x-y|^2}{4A^{-1}(t-s)}}\frac{n_0}{A}(y,s)dy}_pds+||(\pay c_{in})_0||_p\\
\lesssim&\int_0^t \frac{e^{\frac{s-t}{A}}}{2A^{-1}(t-s)}\norm{ \frac{|y|}{\sqrt{4\pi A^{-1}(t-s)}}e^{\frac{-|y|^2}{4A^{-1}(t-s)}}}_r\norm{\frac{n_0}{A}(s)}_qds+||(\pay c_{in})_0||_p\\
\lesssim&\int_0^t \frac{e^{\frac{s-t}{A}}}{2(\frac{t-s}{A})^{1-\frac{1}{2r}}}\frac{ds}{A}\sup_{0\leq s\leq t}||n_0(\cdot,s)||_q+||(\pay c_{in})_0||_p\\
\lesssim&\sup_{0\leq s\leq t}||n_0(s)||_q+||(\pay c_{in})_0||_p.
\end{align*}
Here $\frac{1}{p}+1=\frac{1}{q}+\frac{1}{r}$ and $1\leq r<\infty$. The proof for the higher derivative case is similar, so we omit the proof. This concludes the proof of the lemma.
\end{proof}

\vfill\eject
\bibliographystyle{abbrv}
\bibliography{nonlocal_eqns,JacobBib,SimingBib}

\end{document}